\newtheorem{df}{Definition}[section]
\newtheorem{lm}{Lemma}[section]
\newtheorem{thm}{Theorem}[section]
\newtheorem{prop}{{\bf Proposition}}[section]
\newcounter{saveeqn}%
\title{\Large\bf Classifications and bifurcations of tangent points and their loops
of planar piecewise-smooth systems
\thanks{
Supported by NSFC \#12271378.
}
}
\author{Zhihao Fang, ~~Xingwu Chen\!\!
\footnote{Corresponding author. Email address: scuxchen@scu.edu.cn, xingwu.chen@hotmail.com}
\\
{\small School of Mathematics, Sichuan University,}
{\small Chengdu, Sichuan 610064, P. R. China}
}
\date{}
\begin{document}
\maketitle


\begin{abstract}
Tangent points, especial dynamics existing only in piecewise-smooth systems,
usually have dynamical properties like equilibria of smooth systems.
Loops connecting tangent points own partly properties of limit cycles and homoclinic loops
of smooth systems. In this paper we give classifications for tangent points by tangency degree and
for loops connecting them by configuration,
and investigate their bifurcations.
The classic method is to construct functional parameters
for the case of low tangency degree but, is no longer valid for the case of general tangency degree,
which leads to complicated interlacement of sliding and crossing motions on the switching manifold.
We provide an explicit unfolding for tangent points of general tangency degree and their
loops, in which explicit functional functions are constructed to replace
functional parameters.
We mainly obtain relations between original tangency degree and numbers of bifurcating
tangent points, bifurcating tangent orbits and bifurcating loops for this unfolding.
Some of these relations are generalizations to general tangency degree and others are new for previous publications.

\vskip 0.2cm

{\bf Keywords:} bifurcation, functional functions, loop connecting tangent point, PWS system, tangent point.

\end{abstract}

\baselineskip 15pt
\parskip 10pt
\thispagestyle{empty}
\setcounter{page}{1}

\newpage
\tableofcontents

\section{Introduction}

\setcounter{equation}{0}
\setcounter{lm}{0}
\setcounter{thm}{0}
\setcounter{rmk}{0}
\setcounter{df}{0}
\setcounter{cor}{0}
\setcounter{prop}{0}

Piecewise-smooth systems (abbreviated as PWS systems usually) widely appear in many fields as indicated in
\cite{Bernardo08,Bernardo082}. For instance, PWS systems are used to model mechanics systems with dry friction
(see, e.g., \cite{Galvanetto01,Galvanetto99}), control systems as DC-DC converters (see, e.g., \cite{CP,CTV,Schild09})
and biological systems as predator-prey models (see, e.g., \cite{TangS13,TangS132}). Thus,
PWS systems attract much attentions of researchers (see, e.g., textbooks \cite{Bernardo08,Filippov88,HuangGW,Kunze}) and need a great development.

Consider a PWS system defined on a bounded open set ${\cal U}\subset\mathbb{R}^2$ containing the origin $O:(0,0)$
\begin{equation}
\begin{aligned}
		\left( \begin{array}{c}
		\dot{x}\\
		\dot{y}\\
	\end{array} \right) =\left\{
    \begin{aligned}
&		\left( \begin{array}{c}
			f^+(x,y)\\
			g^+(x,y)
		\end{array} \right)   &&\mathrm{if}~(x,y)\in\Sigma^+,\\
&		\left( \begin{array}{c}
			f^-(x,y)\\
			g^-(x,y)
		\end{array} \right)   && \mathrm{if}~(x,y)\in\Sigma^-,
	\end{aligned} \right.
    \label{pws1}
\end{aligned}
\end{equation}
where $\dot x:=dx/dt, \dot y:=dy/dt$, $f^\pm,g^\pm$ are $C^{\infty}$ functions on $\mathbb{R}^2$,
$\Sigma^+:=\left\{(x,y)\in{\cal U}:~y>0\right\}$ and $\Sigma^-:=\left\{(x,y)\in{\cal U}:~y<0\right\}$.
System~\eqref{pws1} consists of the {\it upper subsystem} defined on $\Sigma^+$,
the {\it lower subsystem} defined on $\Sigma^-$ and a {\it switching manifold}
$\Sigma:=\left\{(x,y)\in{\cal U}:~y=0\right\}$.
An equilibrium of a subsystem is called a {\it standard equilibrium} (resp. {\it boundary equilibrium})
of system~\eqref{pws1} if it lies outside of $\Sigma$ (resp. on $\Sigma$).

For $p:(x,0)\in \Sigma$, if $h(x):=g^+(x,0)g^-(x,0)>0$, the orbit of one subsystem reaching $p$
connects the orbit of the other subsystem escaping from $p$.
Thus, there is an orbit of system~\eqref{pws1} crossing $\Sigma$ at $p$
and set
$\Sigma_c:=\left\{(x,0)\in\Sigma:~h(x)>0\right\}$
is called a {\it crossing region}. If $h(x)<0$,
as in \cite{Filippov88} a {\it sliding vector field}
on {\it sliding region}
$\Sigma_s:=\left\{(x,0)\in\Sigma:~h(x)<0\right\}$
is defined as
\begin{equation}
X_s(x,0):= \left(\frac{f^+(x,0)g^-(x,0)-f^-(x,0)g^+(x,0)}{g^-(x,0)-g^+(x,0)},0 \right)^\top.
\label{SVF}
\end{equation}
Thus, orbit of subsystem reaching $p\in\Sigma_s$ connects a sliding orbit.
Point $(x_0,0)\in \Sigma_s$ is called a {\it pseudo equilibrium} of system~\eqref{pws1} if $X_s(x_0,0)={\boldsymbol 0}$.

Point $(x_0,0)\in \Sigma$ is called a {\it tangent point} of system~\eqref{pws1} if
$h(x_0)=0$ and $f^{\pm}(x_0,0)\ne0$,
i.e., there is at least one orbit of subsystems which is tangent with $\Sigma$ at $(x_0,0)$.
Different from smooth systems, as an especial point in PWS systems tangent point influences
the dynamics greatly near itself.
For instance, when a tangent point corresponds to a zero of $h(x)$ of high multiplicity, it may break into several
tangent points under perturbations (see, e.g., \cite{Fang21,Ponce15,Teixeira11,Kuznetsov03,Han13}).
Tangent points may bifurcate some crossing limit cycles, like foci of classic smooth systems (see, e.g., \cite{Coll01,Ponce22,Filippov88,Zhang10,Kuznetsov03,Han12}).
This phenomenon is called a pseudo-Hopf
bifurcation of PWS systems in \cite{Kuznetsov03} or directly a Hopf bifurcation in \cite{Zhang10}.
On the other hand, the orbits of subsystems of PWS systems may form a loop with tangent points together.
Such loop is usually structurally unstable and crossing limit cycles may bifurcate from it under perturbations
(see, e.g., \cite{Ponce15,Kuznetsov03,Han13}). Such loop is called a critical crossing limit cycle as in \cite{Ponce15,Kuznetsov03} or a homoclinic loop as in \cite{Han13}. The former means that such loop owns partly properties of crossing limit cycles and the latter
 means that tangent points are usually regarded as a kind of singular points of PWS systems.
  Thus, in this paper we generally call it a {\it nonsliding loop connecting tangent points} as defined in section 2.
  By the bifurcation theory, tangent points and their nonsliding loops are very helpful in the
  investigation of crossing limit cycles in PWS systems. For example, it is proved for piecewise-linear systems
   that a lower bound of the maximum number of crossing limit cycles is $2$ (see \cite{Zhang10}) and later $3$
  (see, e.g., \cite{Braga12,Buzzi13,Ponce14,Yang12}).
  The bifurcation of  tangent points and their nonsliding loops
  is also analyzed for PWS Li\'enard systems (see, e.g., \cite{ChenH18,Guan22})
  and PWS predator-prey models (see, e.g., \cite{TangS13,TangS132}).

Results of tangent points and their nonsliding loops in all previous publications are under the requirement that
the tangency degree between orbits passing through tangent points and $\Sigma$ is low, i.e.,
tangent points correspond to zeros of $h(x)$ no more than multiplicity 2. A natural question is as follows.
\\
(Q)~{\it How about the bifurcations of tangent points and their nonsliding loops when the tangency degree is high or general?}

In this paper, we focus on this question.
For the case that tangent points are of low tangency degree, the classic method
is to construct functional parameters to analyze bifurcations,
where functional parameters are the parameters determining some dynamical behaviors.
For example, a functional parameter $\alpha$ is constructed in \cite{Li20}
such that a limit cycle of one subsystem is tangent with $\Sigma$ for $\alpha=0$, away
from $\Sigma$ for $\alpha<0$ and intersects $\Sigma$ at exactly two points for $\alpha>0$.
Similar idea can be found in \cite{Bonet18,Fang21,Teixeira11,Kuznetsov03,Novaes18,Huang22}.
Unfortunately, such idea is not enough to unfold plentiful dynamics sufficiently for question (Q)
because of the generality of tangency degree.
We first give a series definitions and complete classifications
for general tangent points and loops connecting them,
and construct their explicit unfolding systems, in which functional functions are introduced to replace
classic functional parameters.
Analyzing unfolding systems, we answer question (Q) including the numbers of tangent points,
orbits passing through tangent points and loops under perturbation. Relations between
these numbers and tangency degree of original tangent points are given.
Here ``loops'' concludes crossing limit cycles, sliding and nonsliding loops connecting tangent points as
defined in section 2.

This paper is organized as follows. Definitions and classifications for tangent
points and their loops are stated in section 2 as well as literature reviews.
Main results are stated in section 3 and functional functions in unfolding systems
are analyzed in section 4. Proofs of main results
on bifurcations of tangent points
and on bifurcations of their loops are stated in section 5 and section 6 respectively.
Finally, we summarize conclusions and give some discussion remarks in section 7 to end this paper.

\section{Classifications of tangent points and their loops}

\setcounter{equation}{0}
\setcounter{lm}{0}
\setcounter{thm}{0}
\setcounter{rmk}{0}
\setcounter{df}{0}
\setcounter{cor}{0}

In this section we give some preliminaries and classify tangent points and loops connecting them, the
research object of this paper.
The solution of system~\eqref{pws1} is defined in \cite{Filippov88} by {\it Filippov Conventions}.
A continuous function $(x(t),y(t))^\top$ defined over some interval $I$ is called a {\it solution} of system~\eqref{pws1}
if it satisfies differential inclusion
$
(\dot x,\dot y)^\top\in F(x,y)
$
almost everywhere over $I$, where
\begin{equation*}
F(x,y):=\left\{
\begin{aligned}
&\left\{{\cal Z}^+(x,y)\right\},  &&(x,y)\in\Sigma^+, \\
&\left\{a{\cal Z}^+(x,y)+(1-a){\cal Z}^-(x,y):~a\in[0,1]\right\}, &&(x,y)\in\Sigma, \\
&\left\{{\cal Z}^-(x,y)\right\},  &&(x,y)\in\Sigma^-
\end{aligned}
\right.
\end{equation*}
and ${\cal Z}^\pm(x,y):=\left(f^\pm(x,y),g^\pm(x,y)\right)$ denotes the upper and lower vector field of system~\eqref{pws1} respectively.
It is not difficult to find that differential inclusion defined on $\Sigma^+$ (resp. $\Sigma^-$) is equivalent to the upper (resp. lower) subsystem.

For tangent points, we come up with the following definition.

\begin{df} Tangent point $p:(x_0,0)$ of \eqref{pws1} is called of multiplicity $(m^+,m^-)$, if
\begin{eqnarray*}
&&g^+(x_0,0)=\frac{\partial g^+}{\partial x}(x_0,0)=...=\frac{\partial^{(m^+-1)}g^+}{\partial x^{(m^+-1)}}(x_0,0)=0,~\frac{\partial^{m^+}g^+}{\partial x^{m^+}}(x_0,0)\ne 0,\\
&&g^-(x_0,0)=\frac{\partial g^-}{\partial x}(x_0,0)=...=\frac{\partial^{(m^--1)}g^-}{\partial x^{(m^--1)}}(x_0,0)=0,~\frac{\partial^{m^-}g^-}{\partial x^{m^-}}(x_0,0)\ne 0,
\end{eqnarray*}
where non-negative integers $m^\pm$ satisfy $m^++m^-\ge 1$. Moreover, $m^{\pm}$ are called the
tangency multiplicities of the upper and lower subsystems respectively.
\label{df-tp}
\end{df}

$m^+=0,1,2$ (resp. $m^-=0,1,2$) in Definition~\ref{df-tp} correspond to that $p$ is a regular point, fold, cusp (see \cite{Teixeira11}) of the upper (resp. lower) subsystem respectively. Definition~\ref{df-tp} gives a classification for tangent points via the degeneracy, i.e., the multiplicity.
Clearly, multiplicity characterizes tangency degree between the orbit passing through $p$ (such orbit is called tangent orbit) and $\Sigma$. Consider a tangent point $p$ of the upper subsystem, we find that tangent orbit is separated into ``left'' and ``right'' segments by $p$. Further, $p$ is called a {\it visible} (resp. an {\it invisible}, a {\it left}, a {\it right}) tangent point of the upper subsystem if both (resp. neither, the left one, the right one) of two segments lie in $\Sigma^-$ and denote by V (resp. I, L, R) as shown in Figure~\ref{Fig-TP}.
Tangent point $p$ is of odd (resp. even) multiplicity if and only if $p$ is either visible or invisible (resp. either left or right). Thus, for tangent point $p$ of multiplicity $(m^+,m^-)$ with positive $m^\pm$ we call $p$ is a $VI$ tangent point if $p$ is visible for the upper subsystem and invisible for the lower subsystem. Similarly, we define $VV, VL, VR, IV, II, IL, IR, LV, LI, LL, LR, RV, RI, RL, RR$ tangent points and omit statements.

\begin{figure}[h]
\centering
\subfigure[visible]
 {
  \scalebox{0.34}[0.34]{
   \includegraphics{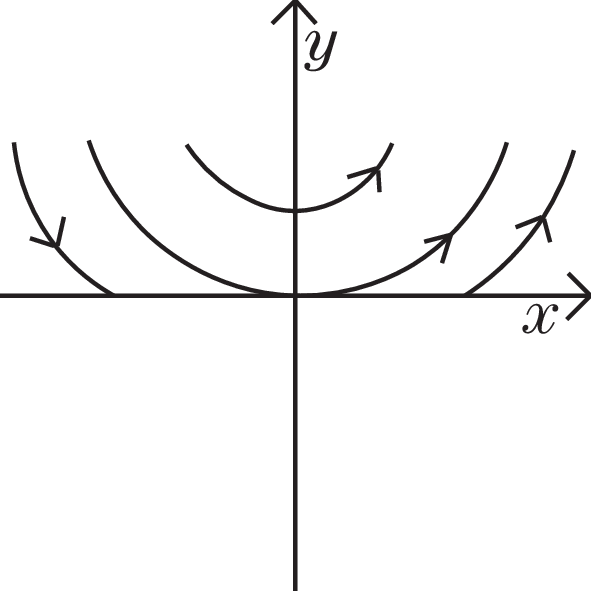}}}
\subfigure[invisible]
 {
  \scalebox{0.34}[0.34]{
   \includegraphics{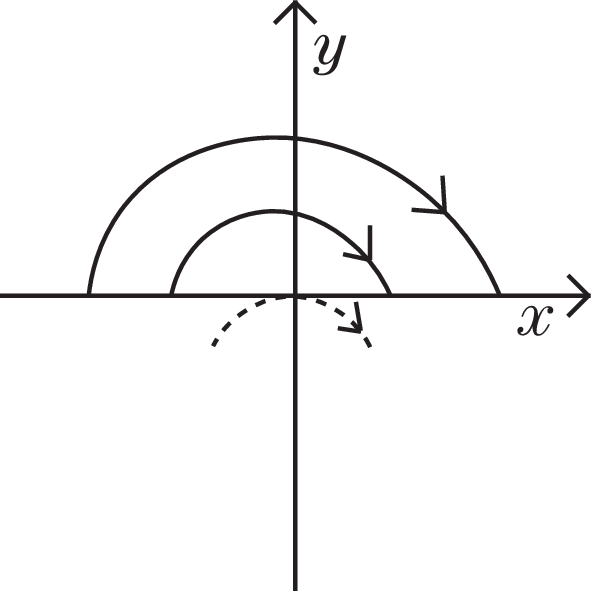}}}
\subfigure[left]
 {
  \scalebox{0.34}[0.34]{
   \includegraphics{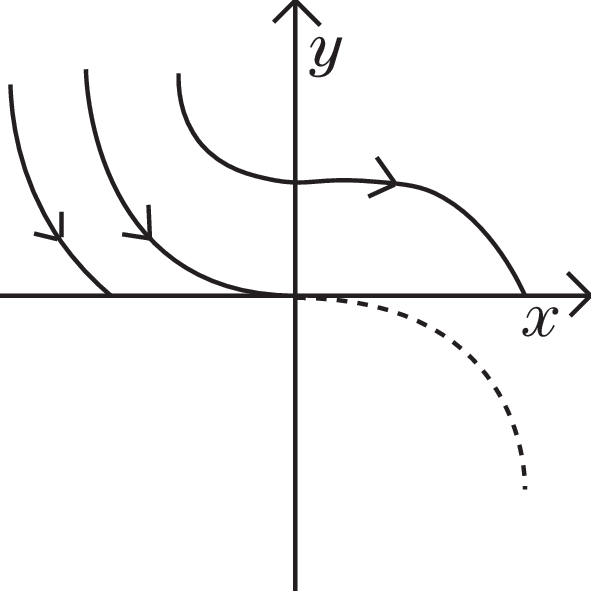}}}
\subfigure[right]
 {
  \scalebox{0.34}[0.34]{
   \includegraphics{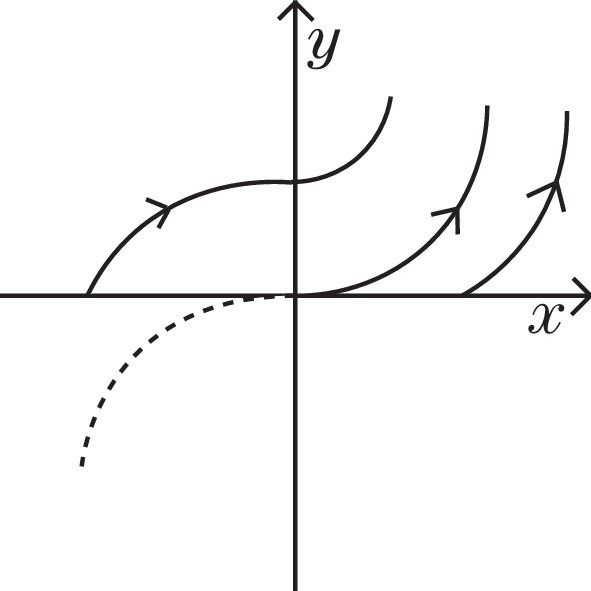}}}
   \caption{visibility of tangent point}
\label{Fig-TP}
\end{figure}

The dynamical behaviors of tangent points are rich and usually relate to pseudo equilibria, which is shown in the follows. Thus, unlike classic smooth systems, singular points of PWS systems conclude not only standard equilibria, pseudo equilibria and boundary equilibria.

As indicated in \cite{Ponce15,Han13}, tangent points of multiplicity $(0,1)$ are structurally stable, which means that there is no bifurcations happening under perturbations. Thus, tangent point $p$ is called to be {\it non-degenerate} when $m^++m^-=1$. As a contrary, tangent point $p$ is called to be {\it degenerate} when $m^++m^->1$. For tangent points of multiplicity $(0,2)$, it is proved in \cite{Kuznetsov03}
that at most two tangent points of multiplicity $(0,1)$ appearing under suitable perturbations.

In \cite{Kuznetsov03}, for tangent points of multiplicity $(1,1)$
an one-parametric unfolding is introduced to show the local bifurcations and
such tangent points are cataloged as $7$ kinds named $VV_1,~VV_2,~VI_1,~VI_2,~VI_3,~II_1,~II_2$
by the bifurcation phenomena. Moreover, as indicated in \cite{Bonet18} the local bifurcations given in \cite{Kuznetsov03} for $VV_1, VV_2, VI_1, II_1$ are complete.
For $II_2$ tangent point, there is at most one crossing limit cycle bifurcated from it under perturbations with some non-degenerate condition, which is proved in \cite{Bonet18,Filippov88, Kuznetsov03,Han12}. For the degenerate case of $II_2$, it is proved in \cite{Ponce22,Han12} that for any integer $k\ge 2$, there are perturbations with
exactly $k$ crossing limit cycles bifurcated from it.
Here degeneracy is with respect to bifurcations, not tangent points.
For $VI_2, VI_3$ tangent points, it is proved in \cite{Bonet18,Teixeira11,Kuznetsov03} that
one pseudo equilibrium is bifurcated out under some non-degeneracy and, then,
two pseudo equilibria are found for a two-parametric unfolding without the non-degeneracy in \cite{Fang21}.
By controlling the locations of bifurcating pseudo equilibrium and tangent point of the lower subsystem,
another bifurcation different from those $7$ kinds given in \cite{Kuznetsov03} is obtained for a tangent point of multiplicity $(1,1)$
in \cite{Siller} and such tangent point is named as $VI_4$.

For tangent points of multiplicity $(1,2)$, a two-parametric unfolding is analyzed in \cite{Teixeira11} for
local dynamics with small parameters and in \cite{Fang21} for global dynamics with general parameters, and
the uniqueness of bifurcating crossing limit cycles is proved as well as the existence of sliding loops connecting one tangent point and one pseudo equilibrium.

Besides tangent points, loops connecting tangent points are usually
structurally unstable as indicated in \cite{Bernardo08,Filippov88} so that we focus on not only local bifurcations for tangent points, but also nonlocal bifurcations for their loops.
Let $\Gamma_i(\alpha)$ denote a segment of a regular orbit of subsystem and be parameterized by $\alpha$, where $\alpha\in[\alpha_{i1},\alpha_{i2}],~\alpha_{i1}<\alpha_{i2}$, $i=1,2,...,n$, and satisfy that each $\Gamma_i(\alpha)$ only intersects with $\Sigma$ at endpoints $p_i$ and $p_{i+1}$, i.e.,
\begin{equation*}
\Gamma_i(\alpha_{i1})=p_{i}\in\Sigma,~~\Gamma_i(\alpha_{i2})=p_{i+1}\in\Sigma,~~\Gamma_i(\alpha)\notin\Sigma~{\rm for}~\alpha\in(\alpha_{i1},\alpha_{i2}).
\end{equation*}
Clearly, it is not difficult to find that $\Gamma_i(\alpha)$ is neither a standard equilibrium
nor a boundary equilibrium nor a sliding orbit. Thus, on any oriented Jordan curve consisting of $\Gamma_i(\alpha)$ ($i=1,...,n$)
all singular points are tangent points if they exist.

An oriented Jordan curve consisting of $\Gamma_i(\alpha)$ ($i=1,...,n$) is called a {\it crossing periodic orbit} if all $p_i$ ($i=1,...,n$) are crossing points. Specially, a crossing periodic orbit is called a {\it crossing limit cycle} if it is isolated, and denoted by $L_c$.

\begin{df}
An oriented Jordan curve consisting of $\Gamma_i(\alpha)$ ($i=1,...,n$) is called a nonsliding loop connecting tangent points of system~\eqref{pws1} if there exists $i^*\in \{1,...,n\}$ such that $p_{i^*}$ is a tangent point, and denoted by $L_{ns}$.
\label{df-nonsli}
\end{df}

\begin{df}
An oriented Jordan curve consisting of $\Gamma_i(\alpha)$ ($i=1,...,n$) and at least one regular sliding orbit is called a sliding loop connecting tangent points of system~\eqref{pws1}, and denoted by $L_s$.
\label{df-sli}
\end{df}

Clearly, there exist tangent points on $L_s$ because if not, regular sliding orbit goes to a pseudo equilibrium or infinity, which means that $L_s$ is not oriented or closed. We call $p_i$ ($i=1,...,n+1$) a {\it switching point} of $L_{ns}$ (resp. $L_s$) if $L_{ns}$ (res. $L_s$) enters one half-plane from the other half-plane at $p_i$ and denote the set of all switching points of $L_{ns}$ (res. $L_s$) by ${\mathcal P}_{ns}$ (res. ${\mathcal P}_{s}$). Any switching point on $L_{ns}$ or $L_s$ is either a crossing point or a tangent point.

\begin{df}
Loop $L_{ns}\in \Lambda$ is called to be {\it grazing} (resp. crossing, critical) if it has no switching points (resp. it has switching points and all of them are crossing points, it has switching points and some of them are tangent points), where $\Lambda$ is the set of all nonsliding loops connecting tangent points of system~\eqref{pws1}.
\label{df-loop}
\end{df}

By Definition~\ref{df-loop}, $\Lambda=\mathcal L^{gra}\cup \mathcal L^{cro} \cup \mathcal L^{cri}$, where
\begin{equation}
\begin{aligned}
&\mathcal L^{gra}:=\{L_{ns}\in \Lambda:~{\mathcal P}_{ns}=\emptyset\},~\\
&\mathcal L^{cro}:=\{L_{ns}\in \Lambda:~{\mathcal P}_{ns}\ne \emptyset, {\mathcal P}_{ns}\subseteq \Sigma_c \},\\
&\mathcal L^{cri}:=\{L_{ns}\in \Lambda:~{\mathcal P}_{ns}\ne \emptyset, {\mathcal P}_{ns}\setminus\Sigma_c\ne \emptyset\}.
\end{aligned}
\label{class-Tan}
\end{equation}

For any loop $L^{gra}\in\mathcal L^{gra}$, it is a standard periodic orbit of one subsystem which is tangent with $\Sigma$ one or more times. For $L^{gra}$ connecting one tangent point of multiplicity $(0,1)$, its bifurcations are analyzed in \cite{Kuznetsov03} and the existences of $L_s$ and standard limit cycles are proved as well as their coexistence. It is proved in \cite{Han13} that for $L^{gra}$ connecting one tangent point of multiplicity $(1,1)$ there exist one $L_c$ and one standard limit cycle under some perturbation. Later, the complete bifurcation diagram is given in \cite{Li20} via two functional parameters, and the coexistence of $2$ bifurcating $L_c$ and one standard limit cycle is proved.

The research method of bifurcations for loop $L^{cro}\in\mathcal L^{cro}$ is similar to $L^{gra}$ but, we still define it as a new type in (\ref{class-Tan}) instead of combining $L^{cro}$ and $L^{gra}$ together. The main reasons are that $L^{cro}$ is a crossing periodic orbit which is tangent with $\Sigma$ one or more times, and that there are $L^{cro}$ and no $L^{gra}$ bifurcated from $L^{cri}\in\mathcal L^{cri}$, which will be shown in later theorems.

For loop $L^{cri}\in\mathcal L^{cri}$, in \cite{Ponce14,Ponce15,Kuznetsov03, Han13} it is proved that
either one $L_c$ or one $L_s$ can be bifurcated from $L^{cri}$ connecting one tangent point of multiplicity $(0,1)$. For $L^{cri}$ connecting one tangent point of multiplicity $(1,1)$, at most two $L_c$ are bifurcated out in \cite{Han13} and, later, the complete bifurcation diagram is obtained in \cite{Novaes18,Huang22} via two functional parameters. For $L^{cri}$ connecting two tangent points of multiplicities $(1,0)$ and $(0,1)$, the complete diagram of two functional parameters is obtained in \cite{Huang22}.

\section{Bifurcations of tangent points and their loops}

\setcounter{equation}{0}
\setcounter{lm}{0}
\setcounter{thm}{0}
\setcounter{rmk}{0}
\setcounter{df}{0}
\setcounter{cor}{0}

In this section, we consider system~\eqref{pws1} with an isolated tangent point of multiplicity $(m^+,m^-)$ at the origin $O:(0,0)$ and it is written as
\begin{equation}
\begin{aligned}
        \left( \begin{array}{c}
        \dot{x}\\
        \dot{y}\\
    \end{array} \right) =\left\{
    \begin{aligned}
&       \left( \begin{array}{c}
            f^+(x,y)\\
            \phi^+(x,y)x^{m^+}+\Upsilon^+(x,y)y
        \end{array} \right)   &&\mathrm{if}~(x,y)\in\Sigma^+,\\
&       \left( \begin{array}{c}
            f^-(x,y)\\
            \phi^-(x,y)x^{m^-}+\Upsilon^-(x,y)y
        \end{array} \right)   && \mathrm{if}~(x,y)\in\Sigma^-,
    \end{aligned} \right.
    \end{aligned}
    \label{pws2}
\end{equation}
where $m^++m^-\ge 1$, $f^\pm(0,0)\ne 0, \phi^\pm(0,0)\ne 0$ and $\partial^{m^\pm}\Upsilon^\pm/\partial x^{m^\pm}\equiv 0$ if $m^\pm\ge 1$.

\subsection{Bifurcations of tangent points}

\setcounter{equation}{0}
\setcounter{lm}{0}
\setcounter{thm}{0}
\setcounter{rmk}{0}
\setcounter{df}{0}
\setcounter{cor}{0}

In this subsection, we discuss bifurcations of a tangent point of multiplicity $(m^+,m^-)$.

\begin{thm}
If $O$ is a degenerate tangent point of system~\eqref{pws2}, i.e., $m^++m^-\ge 2$, then
\begin{enumerate}
\item[{\rm (a)}] under perturbations $O$ breaks into at most $m^++m^-$ tangent points and their multiplicities $(m^+_i,m^-_i)$ ($i=1,...,\ell$) satisfy
           \begin{eqnarray}
           \sum_{i=1}^{\ell}m^+_i\le m^+,~~\sum_{i=1}^{\ell}m^-_i\le m^-
           \label{maxi-sum}
           \end{eqnarray}
when exactly $\ell\in\left[1,m^++m^-\right]$ tangent points appear.
\item[{\rm (b)}] $\left(m^+_i,m^-_i\right)\in\left\{(0,1),~(1,0)\right\}$ for all $i=1,...,\ell$ and each subsystem has alternating invisible and visible tangent points bifurcated from $O$ if $\ell=m^++m^-$.
\item[{\rm (c)}] For given $\ell\in\left[1,m^++m^-\right]$ there exist perturbations such that exactly $\ell$ tangent points are bifurcated from $O$ and these two ``$=$'' in \eqref{maxi-sum} hold.
\end{enumerate}
\label{thm1}
\end{thm}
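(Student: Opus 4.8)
The plan is to analyze how the zero of $h(x) = g^+(x,0)\,g^-(x,0)$ at $x=0$ splits under perturbation, translating the problem into a statement about the factorizations $g^\pm(x,0) = \phi^\pm(x,0)\,x^{m^\pm}$ near the origin. First I would observe that, by the normal form \eqref{pws2} and since $\phi^\pm(0,0)\ne 0$, a tangent point of the perturbed system lying near $O$ is precisely a point $(x_*,0)$ where $g^+(x,0)$ or $g^-(x,0)$ vanishes (while $f^\pm$ stays nonzero, which is automatic for small perturbations since $f^\pm(0,0)\ne 0$). So the bifurcating tangent points of the upper subsystem are the small roots of the perturbed $g^+(\cdot,0)$, and likewise for the lower subsystem; the tangency multiplicity $m_i^+$ of the $i$-th such point equals the order of that root of the perturbed $g^+(\cdot,0)$ (and $m_i^-=0$ unless a root of the perturbed $g^-(\cdot,0)$ coincides with it, which is a non-generic overlap one analyzes separately). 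Since the perturbation of $g^+(x,0)$ is $C^\infty$-close to $\phi^+(x,0)\,x^{m^+}$, by the Weierstrass preparation theorem (or simply Rouché / a degree count on a small circle) the total number of small roots of the perturbed $g^+(\cdot,0)$, counted with multiplicity, is at most $m^+$; hence $\sum_i m_i^+ \le m^+$, and symmetrically $\sum_i m_i^- \le m^-$. This is \eqref{maxi-sum}, and it immediately gives $\ell \le \sum_i(m_i^++m_i^-) \le m^++m^-$ once one notes $m_i^++m_i^-\ge 1$ for each genuine tangent point, which proves (a).

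For (b), suppose $\ell = m^++m^-$. Then both inequalities in \eqref{maxi-sum} are forced to be equalities and, since each term $m_i^++m_i^-\ge1$ and there are $m^++m^-$ terms summing to $m^++m^-$, every $m_i^++m_i^- = 1$; thus each bifurcating tangent point has multiplicity $(0,1)$ or $(1,0)$, i.e. is a simple fold of exactly one subsystem. Moreover the multiplicity-$1$ roots of the perturbed $g^+(\cdot,0)$ are $m^+$ in number, all simple and real; here I would use that a real polynomial-like function which is $C^\infty$-close to $c\,x^{m^+}$ and has $m^+$ simple real roots near $0$ must have its derivative changing sign at consecutive roots, so the sign of $g^+(x,0)$ alternates as $x$ increases through these roots — and the sign of $g^+(x,0)$ on the left and right of a fold is exactly what distinguishes a visible from an invisible fold of the upper subsystem (one reads this off the tangent-orbit picture in Figure~\ref{Fig-TP} together with the sign of $f^+(0,0)$). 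Hence the upper folds alternate visible/invisible along $\Sigma$, and the same argument applies to the lower subsystem, giving (b).

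For (c), the task is constructive: given $\ell \in [1, m^++m^-]$, I would choose non-negative integers $m_i^+$ with $\sum_{i=1}^{\ell} m_i^+ = m^+$ and $m_i^-$ with $\sum_{i=1}^{\ell} m_i^- = m^-$ such that $m_i^++m_i^-\ge 1$ for each $i$ (possible exactly because $\ell\le m^++m^-$), pick $\ell$ distinct small reals $x_1<\dots<x_\ell$, and perturb $\phi^+(x,0)\,x^{m^+}$ to $\phi^+(x,0)\prod_{i}(x-x_i)^{m_i^+}$ and similarly on the lower side, extending these off $\Sigma$ so that the $\Upsilon^\pm$ structure of \eqref{pws2} and the conditions $f^\pm(0,0)\ne0$, $\phi^\pm(0,0)\ne0$ are preserved (a bump-function cutoff localizes the perturbation to ${\cal U}$). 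By construction the perturbed system has exactly $\ell$ tangent points, the $i$-th of multiplicity $(m_i^+,m_i^-)$, and $\sum m_i^+ = m^+$, $\sum m_i^- = m^-$, so both equalities in \eqref{maxi-sum} hold. I expect the main obstacle to be the bookkeeping at points where a root of the perturbed $g^+(\cdot,0)$ and a root of the perturbed $g^-(\cdot,0)$ coincide (so $m_i^+>0$ and $m_i^->0$ simultaneously): one must check that such a coincident zero is still a single tangent point with the claimed multiplicity $(m_i^+,m_i^-)$ in the sense of Definition~\ref{df-tp} rather than two separate ones, and that the counting of $h(x)$'s zeros is consistent — but this is handled by keeping the two factorizations independent and noting Definition~\ref{df-tp} allows both $m^\pm$ positive. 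The analytic core — the upper bound $\sum m_i^+\le m^+$ — is the genuinely substantive step and rests only on the preparation/Rouché argument, while alternation in (b) and the realization in (c) are then elementary.
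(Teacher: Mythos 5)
Your proposal is correct and follows essentially the same route as the paper: a root-counting bound for the perturbed $g^\pm(\cdot,0)$ via a preparation theorem gives (a), the forced equality $\sum_i(m_i^++m_i^-)=m^++m^-$ together with sign alternation of the derivative at consecutive simple zeros gives (b), and the explicit product perturbation $\phi^\pm(x,0)\prod_i(x-x_i)^{m_i^\pm}$ gives (c). The only caveat is that in the $C^\infty$ (non-analytic) setting the root bound must be justified by the Malgrange preparation theorem (as the paper does) or a Rolle-type argument, since Rouch\'e and the holomorphic Weierstrass preparation theorem do not apply to merely smooth perturbations.
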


Theorem~\ref{thm1} discovers a relation between number of tangent points bifurcating from $O$
(such tangent points are usually called {\it bifurcating tangent points} from $O$)
and degeneration of tangent point $O$, i.e., tangency multiplicities.
Let $\Gamma^\pm$ be orbit segments of perturbations of system~\eqref{pws2}
in $\overline{\Sigma^\pm}$ respectively and $N(\Gamma^\pm)$ denote the numbers of bifurcating tangent points of $O$ on $\Gamma^{\pm}$. Further, we call $\Gamma^+$ (resp. $\Gamma^-$) a {\it bifurcating tangent orbit} if $N(\Gamma^+)\ge 1$ (resp. $N(\Gamma^-)\ge 1$).

By Theorem~\ref{thm1}, for the upper subsystem of one perturbation system the number of
bifurcating tangent points except the invisible type is at most $(m^+-1)/2$
(resp. $\lfloor(m^++1)/2\rfloor$) in the case that
$O$ of system~\eqref{pws2} is invisible (resp. visible, left or right),
which implies that any orbit in $\Sigma^+$ is tangent with $\Sigma$ at most $(m^+-1)/2$
(resp. $\lfloor(m^++1)/2\rfloor$) times. Here $\lfloor x\rfloor$ means the maximum integer no greater than $x$.
Moreover, all these $(m^+-1)/2$
(resp. $\lfloor(m^++1)/2\rfloor$) bifurcating tangent points may be of visible type by Theorem~\ref{thm1}(b).
On the other hand, there are at most $(m^+-1)/2$ (resp.
$\lfloor(m^++1)/2\rfloor$) distinct orbits passing through one visible bifurcating tangent point. It is natural to obtain that if $O$ is invisible (resp. visible, left or right) for system~\eqref{pws2} and $m^+>1$ (resp. $m^+\ge1$), then for each bifurcating tangent orbit $\Gamma^+$
\begin{equation*}
N(\Gamma^+)\in\left\{0,...,(m^+-1)/2\right\}~\left({\rm resp.}~N(\Gamma^+)\in\left\{0,...,\lfloor(m^++1)/2\rfloor\right\}\right)
\end{equation*}
and, further, for each $\ell\in\left\{1,...,(m^+-1)/2\right\}$ (resp. $\ell\in\left\{1,...,\lfloor(m^++1)/2\rfloor\right\}$) there are at most $\lfloor(m^+-1)/(2\ell)\rfloor$ (resp. $\lfloor(m^++1)/(2\ell)\rfloor$) bifurcating tangent orbits passing through $\ell$ bifurcating tangent points in $\Sigma^+$.

To determine the number of bifurcating tangent orbits passing through a given number of bifurcating tangent points,
in the following we consider system~\eqref{pws2} with functions $\Upsilon^\pm\equiv0$, i.e.,
\begin{equation}
\begin{aligned}
        \left( \begin{array}{c}
        \dot{x}\\
        \dot{y}\\
    \end{array} \right) =\left\{
    \begin{aligned}
&       \left( \begin{array}{c}
            f^+(x,y)\\
            \phi^+(x,y)x^{m^+}
        \end{array} \right)   &&\mathrm{if}~(x,y)\in\Sigma^+,\\
&       \left( \begin{array}{c}
            f^-(x,y)\\
            \phi^-(x,y)x^{m^-}
        \end{array} \right)   && \mathrm{if}~(x,y)\in\Sigma^-,
    \end{aligned} \right.
\end{aligned}
    \label{pws3}
\end{equation}
and its unfolding in form
\begin{equation}
\begin{aligned}
        \left( \begin{array}{c}
        \dot{x}\\
        \dot{y}\\
    \end{array} \right) =\left\{
    \begin{aligned}
&       \left( \begin{array}{c}
            f^+(x,y+\psi^+)\\
            \phi^+(x,y+\psi^+)\prod\limits_{i=1}^{m^+}(x-\lambda^+_i)-f^+(x,y+\psi^+)\dot\psi^+
        \end{array} \right)   &&\mathrm{if}~(x,y)\in\Sigma^+,\\
&       \left( \begin{array}{c}
            f^-(x,y+\psi^-)\\
            \phi^-(x,y+\psi^-)\prod \limits_{i=1}^{m^-}(x-\lambda^-_i)-f^-(x,y+\psi^-)\dot\psi^-
        \end{array} \right)   && \mathrm{if}~(x,y)\in\Sigma^-,
    \end{aligned} \right.
\end{aligned}
    \label{pws4}
\end{equation}
where $\psi^{\pm}:=\psi^{\pm}\left(x,{\boldsymbol k}^{\pm}\right)$, $\dot\psi^{\pm}:=d\psi^{\pm}/dx$,
$\left({\boldsymbol \lambda}^{\pm},{\boldsymbol k}^{\pm}\right)\in\mathbb{R}^{n^\pm}$ and $\psi^{\pm}\left(x,{\boldsymbol 0}\right)\equiv 0$.
For integrity, we mark that $\prod_{i=1}^{m^+}(x-\lambda^+_i)$ (resp. $\prod_{i=1}^{m^-}(x-\lambda^-_i)$) denotes $1$ if $m^+=0$ (resp. $m^-=0$) in this paper.

\begin{thm} Assume that tangent point $O$ is invisible (resp. visible, left or right) for the upper subsystem of \eqref{pws3} and $m^+>1$ (resp. $m^+\ge 1$). Then for each $\ell\in\left\{1,...,(m^+-1)/2\right\}$ (resp. $\ell\in\left\{1,...,\lfloor(m^++1)/2\rfloor\right\}$) there exists ${\boldsymbol \lambda}^+$ and $\psi^+\left(x,{\boldsymbol k}^+\right)$ such that the upper subsystem of \eqref{pws4} has $\lfloor(m^+-1)/(2\ell)\rfloor$ (resp. $\lfloor(m^++1)/(2\ell)\rfloor$) bifurcating tangent orbits passing through $\ell$ visible bifurcating tangent points.
\label{thm2}
\end{thm}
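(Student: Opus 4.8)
The plan is to reduce the problem to a purely one-dimensional statement about the function $\theta^+(x):=g^+(x,0)$ of the upper subsystem, which in \eqref{pws3} equals $\phi^+(x,0)\,x^{m^+}$ with $\phi^+(0,0)\neq 0$, and to engineer the unfolding so that the perturbed vertical component vanishes at prescribed points with prescribed tangency type. Recall from the discussion after Theorem~\ref{thm1} that a point $(x_*,0)$ on an orbit segment $\Gamma^+$ of a perturbation of \eqref{pws3} is a tangent point of the upper subsystem precisely when the perturbed $y$-component vanishes there, and it is visible iff this zero has \emph{odd} multiplicity (plus a sign condition on $\phi^+$ and $f^+$ governed by visibility). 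In the unfolding \eqref{pws4} the upper $y$-component, restricted to $y=0$, is $\phi^+\!\big(x,\psi^+(x,{\boldsymbol k}^+)\big)\prod_{i=1}^{m^+}(x-\lambda^+_i)-f^+\!\big(x,\psi^+\big)\dot\psi^+$; the product factor $\prod(x-\lambda^+_i)$ is exactly the ``functional function'' mechanism replacing classic functional parameters, and the key point is that its $m^+$ roots $\lambda^+_i$ can be chosen freely (near $0$) to place bifurcating fold points, while $\psi^+$ can be used to superpose a graph-transformation that lifts or lowers the flow so that one orbit segment visits several of these roots.

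First I would set up the counting. With $O$ invisible and $m^+>1$ odd, Theorem~\ref{thm1} bounds the number of non-invisible bifurcating tangent points on the upper side by $(m^+-1)/2$; with $O$ visible, left, or right and $m^+\geq 1$ the bound is $\lfloor(m^++1)/2\rfloor$. Fix $\ell$ in the allowed range. I want $\lfloor(m^+-1)/(2\ell)\rfloor$ (resp.\ $\lfloor(m^++1)/(2\ell)\rfloor$) distinct bifurcating tangent orbits $\Gamma^+_1,\dots,\Gamma^+_N$, each passing through exactly $\ell$ visible bifurcating tangent points. Choose ${\boldsymbol\lambda}^+$ so that the $m^+$ roots split into $N$ groups of $2\ell$ consecutive roots each (the leftover $m^+-2\ell N$ roots, which is $<2\ell$, are placed so as to pair up into invisible folds or to coincide, contributing nothing to the non-invisible count), and within each group of $2\ell$ roots make them pairwise distinct and small. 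Then for the unperturbed sliding/crossing structure the product $\prod(x-\lambda^+_i)$ changes sign at each simple root, so each group yields $2\ell$ fold points; but a fold is \emph{visible} or \emph{invisible} according to the sign of the vector field, and consecutive simple zeros of $\prod(x-\lambda^+_i)$ alternate between the two types. Thus each group of $2\ell$ roots carries exactly $\ell$ visible folds, as required per orbit.

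The heart of the construction is choosing $\psi^+(x,{\boldsymbol k}^+)$ so that a single regular orbit of the upper subsystem, which near $O$ is essentially a graph $y=\eta(x)$ over a small $x$-interval (since $f^+(0,0)\neq 0$ guarantees transversality to the $y$-axis direction and the orbit is $C^\infty$), is pushed down onto $\Sigma$ at each of the $\ell$ visible roots of its group and stays in $\overline{\Sigma^+}$ elsewhere. Here I would invoke the analysis of functional functions from section 4 (which I may assume): the term $-f^+\dot\psi^+$ is exactly the correction making $(x,y)\mapsto(x,y-\psi^+)$ a conjugacy that turns the graph $y=\psi^+(x,{\boldsymbol k}^+)$ into an invariant curve through the desired tangency pattern, so it suffices to pick $\psi^+$ to be a small $C^\infty$ function whose graph touches $\Sigma$ from above at the $\ell$ chosen visible roots with even-order contact (so the orbit is tangent, not transversal) and is positive off those points on the relevant interval; a bump-function / spline construction with $m^+$-dimensional parameter ${\boldsymbol k}^+$ produces such $\psi^+$ with $\psi^+(x,{\boldsymbol 0})\equiv 0$. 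Doing this on $N$ disjoint $x$-intervals simultaneously, one per group, yields the $N$ distinct bifurcating tangent orbits. Finally one checks $N=\lfloor(m^+-1)/(2\ell)\rfloor$ in the invisible case and $N=\lfloor(m^++1)/(2\ell)\rfloor$ in the other cases by the pigeonhole count of how many groups of size $2\ell$ fit into $m^+-1$ (resp.\ $m^++1$) admissible roots, together with the parity bookkeeping from Theorem~\ref{thm1}(b).

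The main obstacle I anticipate is the simultaneous control of \emph{visibility} and \emph{orbit structure}: it is easy to produce $m^+$ simple zeros of the $y$-component, but one must verify that (i) exactly half the zeros in each group are visible — which needs the alternation-of-signs argument together with the correct choice of the sign of $\phi^+$ inherited from \eqref{pws3} and the direction of $f^+$, and (ii) the engineered orbit does not accidentally cross $\Sigma$ or leave $\overline{\Sigma^+}$ between consecutive prescribed tangencies, nor pick up extra unwanted tangent points outside its group; (ii) is where the precise estimates on $\psi^+$ and on the size of the $\lambda^+_i$ from section 4 are essential, and where the interlacement of sliding and crossing arcs on $\Sigma$ (mentioned in the introduction) has to be tracked carefully. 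A secondary technical point is ensuring the $N$ orbits are genuinely \emph{distinct} bifurcating tangent orbits rather than arcs of one orbit; disjointness of the $x$-intervals and the fact that each such orbit returns to $\overline{\Sigma^+}$ only within its own interval handles this.
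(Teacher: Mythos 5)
Your high-level architecture matches the paper's: use ${\boldsymbol \lambda}^+$ to place $m^+$ simple roots whose tangent points alternate in visibility (so a block of $2\ell$ consecutive roots carries $\ell$ visible ones), and use $\psi^+$ to force a single orbit to be tangent at the $\ell$ visible points of a block. The combinatorial count $N=\lfloor(m^+\mp1)/(2\ell)\rfloor$ also comes out the same way. However, the central mechanism — how $\psi^+$ produces one orbit through $\ell$ prescribed visible tangent points — is described in a way that does not work, and this is precisely the step the whole theorem hinges on.

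Concretely: by Proposition~\ref{prop2}, an orbit of \eqref{pws4} through $(x_0,0)$ is the orbit of the transition system~\eqref{tpws} through $\left(x_0,\psi^+(x_0,{\boldsymbol k}^+)\right)$ shifted down by the graph of $\psi^+$. Hence the \eqref{pws4}-orbit touches $\Sigma$ at $\left(\lambda^+,0\right)$ if and only if $\psi^+\left(\lambda^+,{\boldsymbol k}^+\right)$ \emph{equals the positive height} at which the corresponding \eqref{tpws}-orbit passes over $x=\lambda^+$. You instead prescribe $\psi^+$ to \emph{vanish} (with even-order contact) at the chosen visible roots and be positive elsewhere. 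Under that choice the shifted orbit meets $\Sigma$ at $\lambda^+$ only if the transition orbit itself already lies on $\Sigma$ there — i.e., you would need a single orbit of \eqref{tpws} passing through $\ell$ distinct visible tangent points on $\Sigma$, which is an $(\ell-1)$-codimension condition on ${\boldsymbol \lambda}^+$ and is exactly the difficulty the functional function is introduced to avoid (the paper forces it directly only for $\ell=2$, by an Intermediate Value Theorem argument on one intermediate root). The missing quantitative ingredient is the paper's estimate showing that the transition orbits launched from $\left(\lambda^+_2,C_n\right)$ with $C_n=\delta^{m^+}(1+n\delta)$ stay strictly above $\Sigma$ with computable, ordered heights $Y_{n,m}$ over every $\lambda^+_m$, after which one sets $k^+_{n+2d+1}=Y_{j,2n}$ so that $\psi^+$ interpolates those heights over the chosen block.

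A second, related gap: the tangent set of \eqref{pws4} on $\Sigma$ is the zero set of $\phi^+\left(x,\psi^+\right)\prod_i\left(x-\lambda^+_i\right)-f^+\left(x,\psi^+\right)\dot\psi^+$, so a generic bump $\psi^+$ moves or creates tangent points through the $-f^+\dot\psi^+$ term. You quote this expression but then treat the roots $\lambda^+_i$ as if they alone determined the tangent points. The construction must (and in the paper does) enforce $\dot\psi^+\left(\lambda^+_i,{\boldsymbol k}^+\right)=\ddot\psi^+\left(\lambda^+_i,{\boldsymbol k}^+\right)=0$ at every root, which is why the specific cutoff-function form of $\psi$ in \eqref{psidef}, with all derivatives vanishing at the nodes $k_i=\lambda^+_i$, is needed rather than an arbitrary small spline. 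Without both of these points repaired, the proposal does not yield the claimed tangent orbits.
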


Clearly, there is no bifurcating tangent orbits if $m^+=1$ in the case that tangent point $O$ is invisible for the upper subsystem of \eqref{pws3}. Moreover, it is not hard to prove the similar result for $\Gamma^-$ as $\Gamma^+$ given in Theorem~\ref{thm2} and we omit its statement.

Theorem~\ref{thm1} is a generalization on the results of bifurcating tangent points with respect to number
given in previous publications (see, e.g., \cite{Fang21,Teixeira11,Kuznetsov03,Li20,Han13})
from $m^{\pm}\le2$ to general $m^{\pm}$.
On the numbers of bifurcating tangent orbits and bifurcating tangent points on them, Theorem~\ref{thm2} holds naturally for $m^{\pm}\le2$ but are new for general $m^{\pm}$.

\subsection{Bifurcations of nonsliding loops connecting tangent points}

In this subsection, we investigate the bifurcations of nonsliding loops connecting tangent points defined
in Definition~\ref{df-nonsli}. Assume that
system~\eqref{pws3} has a nonsliding loop $L_*^{cri}\in \mathcal L^{cri}$ satisfying (H) as follows.
\begin{enumerate}
	   \item[(H)] $L_{*}^{cri}$ is clockwise and intersects $\Sigma$ at exactly two points $P: (P_x, 0)\in \Sigma_c$ and $O$, where $\mathcal L^{cri}$ is defined in \eqref{class-Tan} and $P_x<0$.
\end{enumerate}

By the visibility (visible, left, right) of tangent point $O$, under $(x,y,t)\to (x,-y,-t)$ there are totally $14$ types of $L_*^{cri}$ satisfying (H) as shown in Figure~\ref{Fig-Loop}, where $V_l$-$R$ means that tangent point $O$ is visible for the upper subsystem, right for the lower one and $L_*^{cri}$
passes through $O$ along the left segment in $\Sigma^+$. Others in Figure~\ref{Fig-Loop} have similar meanings.
\begin{figure}[htp]
\centering
\subfigure[$V_l$-$V_l$]
 {
  \scalebox{0.34}[0.34]{
   \includegraphics{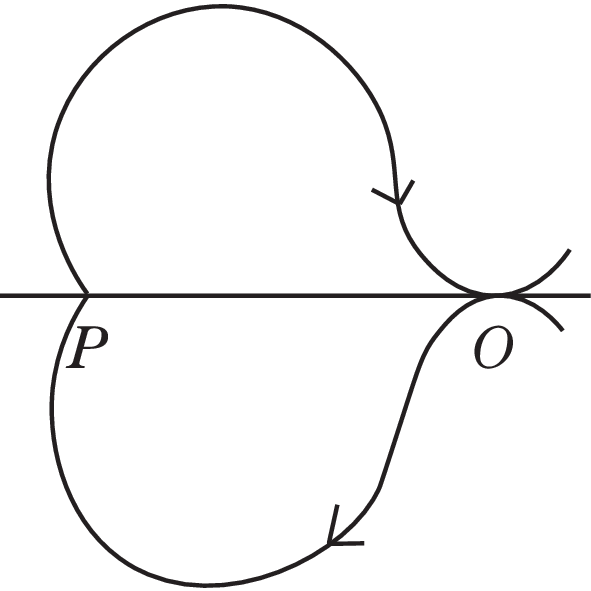}}}
\subfigure[$V_l$-$V_r$]
 {
  \scalebox{0.34}[0.34]{
   \includegraphics{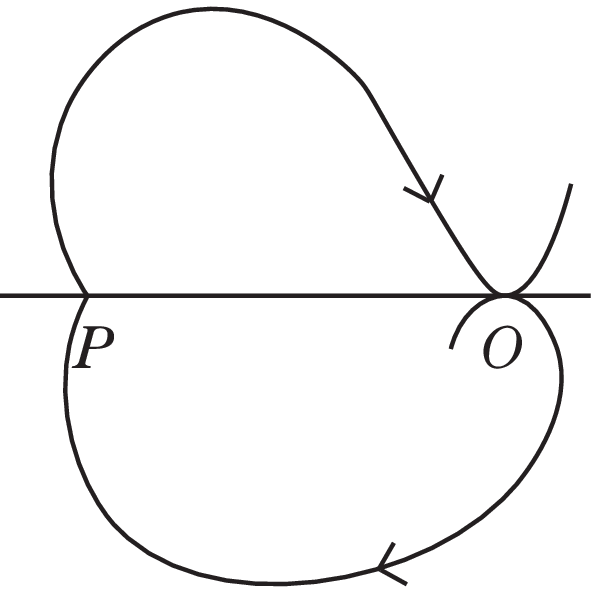}}}
\subfigure[$V_l$-$L$]
 {
  \scalebox{0.34}[0.34]{
   \includegraphics{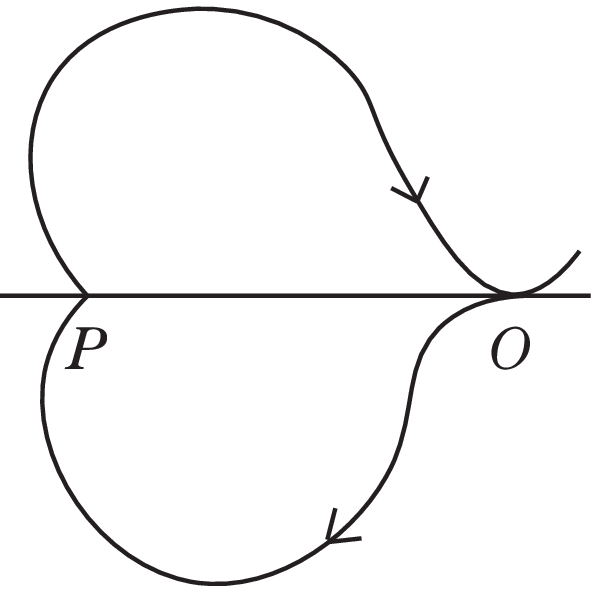}}}
\subfigure[$V_l$-$R$]
 {
  \scalebox{0.34}[0.34]{
   \includegraphics{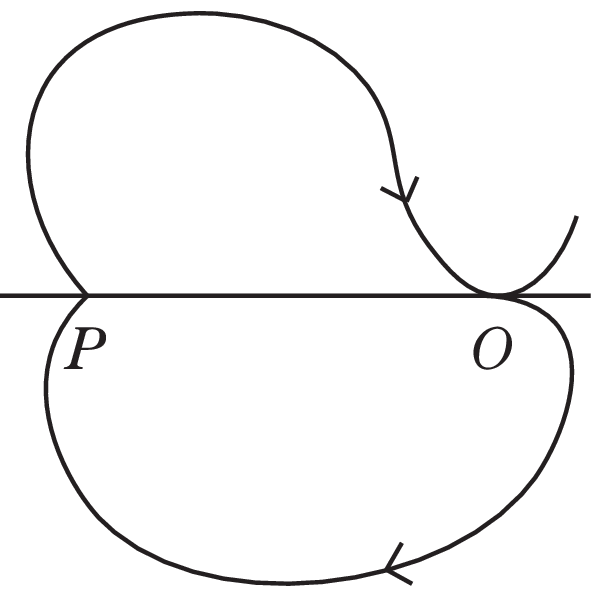}}}
\subfigure[$V_l$-$N$]
 {
  \scalebox{0.34}[0.34]{
   \includegraphics{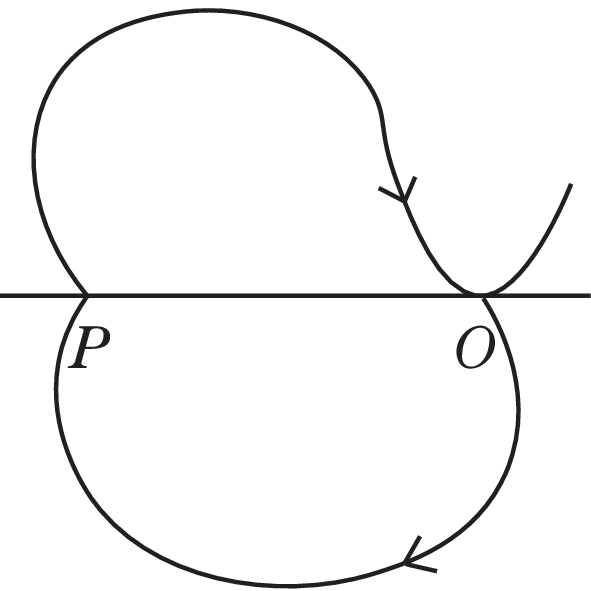}}}
\subfigure[$V_r$-$V_r$]
 {
  \scalebox{0.34}[0.34]{
   \includegraphics{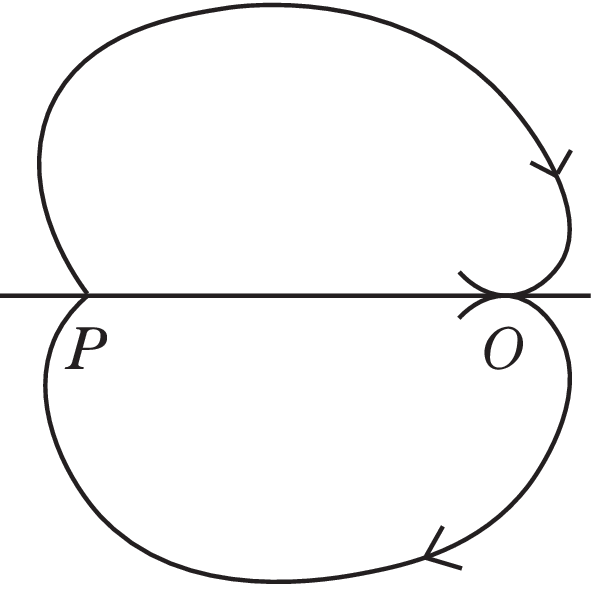}}}
\subfigure[$V_r$-$L$]
 {
  \scalebox{0.34}[0.34]{
   \includegraphics{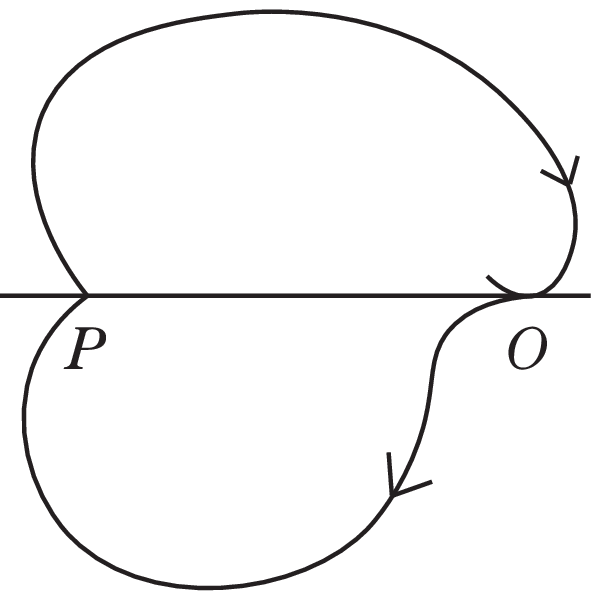}}}
\subfigure[$V_r$-$R$]
 {
  \scalebox{0.34}[0.34]{
   \includegraphics{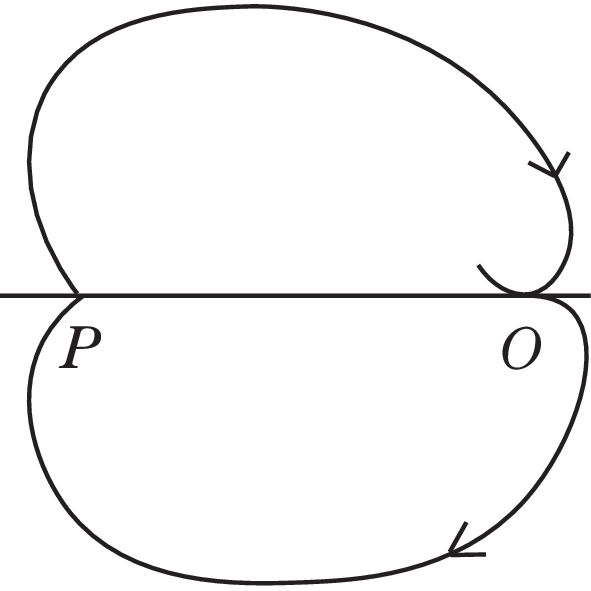}}}
\subfigure[$V_r$-$N$]
 {
  \scalebox{0.34}[0.34]{
   \includegraphics{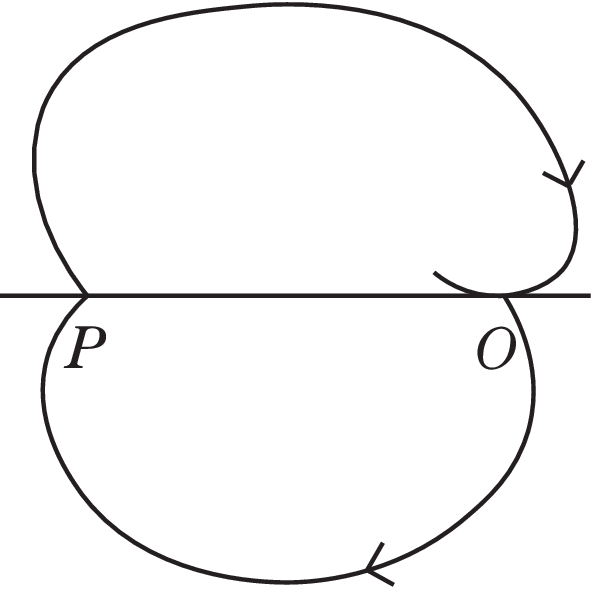}}}
\subfigure[$L$-$L$]
 {
  \scalebox{0.34}[0.34]{
   \includegraphics{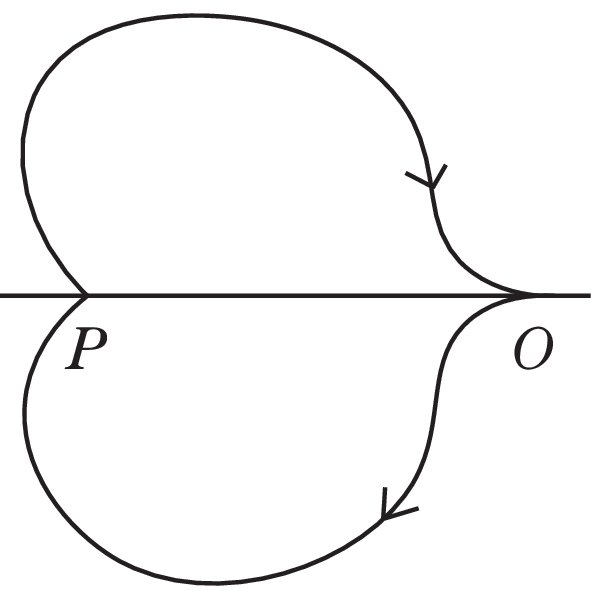}}}
\subfigure[$L$-$R$]
 {
  \scalebox{0.34}[0.34]{
   \includegraphics{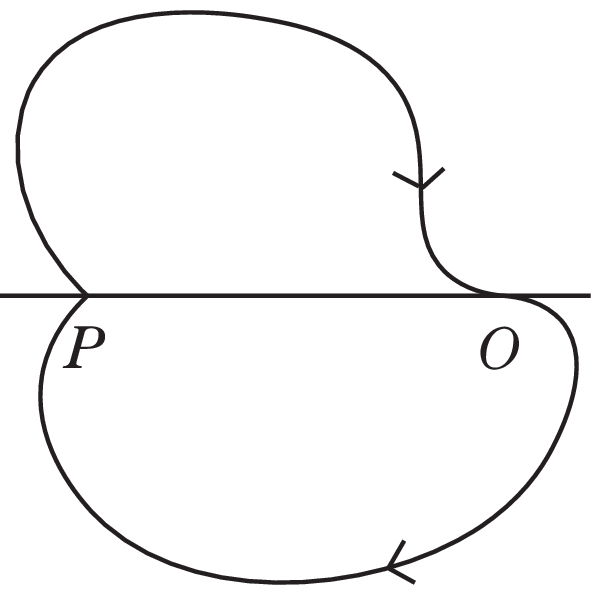}}}
\subfigure[$L$-$N$]
 {
  \scalebox{0.34}[0.34]{
   \includegraphics{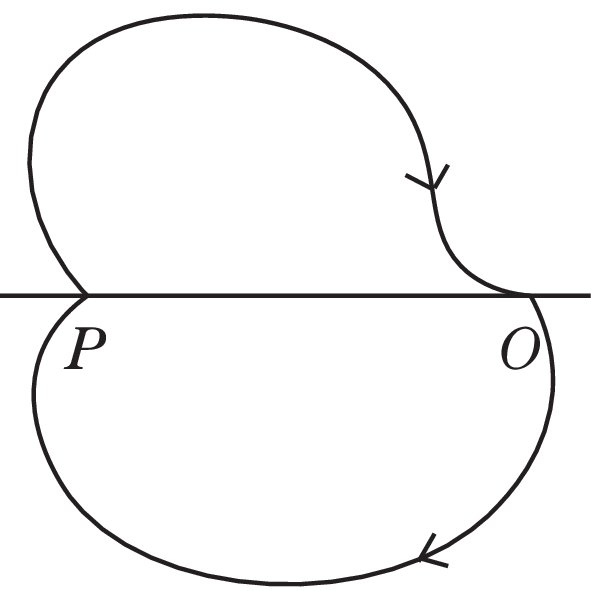}}}
\subfigure[$R$-$R$]
 {
  \scalebox{0.34}[0.34]{
   \includegraphics{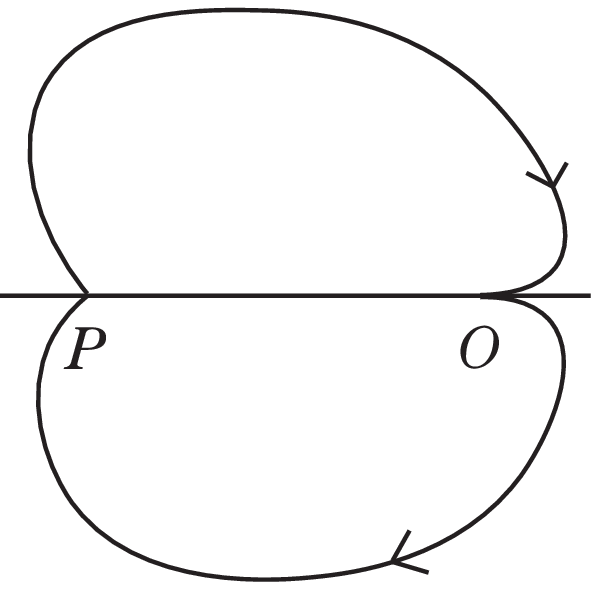}}}
\subfigure[$R$-$N$]
 {
  \scalebox{0.34}[0.34]{
   \includegraphics{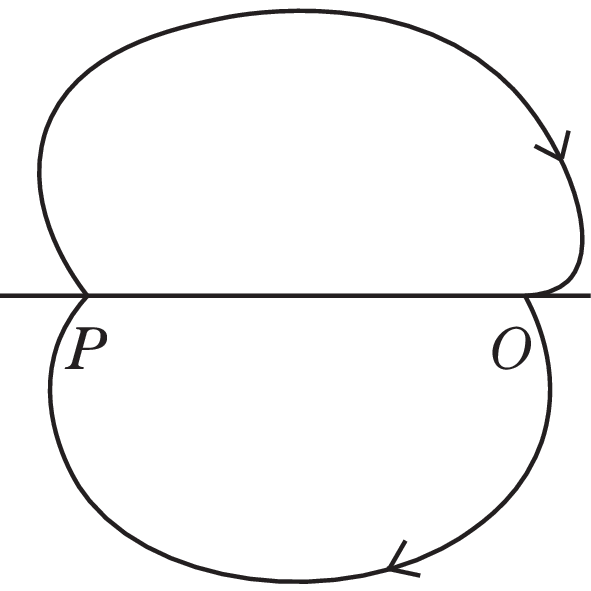}}}
\caption{ types of loop $L_*^{cri}$ }
\label{Fig-Loop}
\end{figure}

In the following, we focus on the existence and number of bifurcating $L_c,~L_s,~L^{gra}$, $L^{cro},~L^{cri}$ of $L_{*}^{cri}$, i.e., loops bifurcating from $L_{*}^{cri}$. Since these bifurcating loops appear in a small neighborhood of $L_{*}^{cri}$, there is no standard equilibria in such neighborhood and further, there is no bifurcating $L^{gra}$. In the following theorem, we give results on the numbers of bifurcating tangent points on bifurcating $L^{cro}$ and $L^{cri}$.

\begin{thm}
Assume that $L_{*}^{cri}$ is a nonsliding loop of system~\eqref{pws3} satisfying {\rm (H)}.
\begin{enumerate}
   \item[{\rm (a)}] There is no bifurcating $L^{cro}$ if $m^*:={\rm max}\left\{m^\pm\right\}=1$ and, otherwise, for each $\ell\in \left\{1,...,\lfloor m^*/2\rfloor\right\}$ there exist ${\boldsymbol \lambda}^\pm$ and $\psi^{\pm}\left(x,{\boldsymbol k}^\pm\right)$ such that system~\eqref{pws4} has a bifurcating $L^{cro}(\ell)$ with exactly $\ell$ bifurcating tangent points.
   \item[{\rm (b)}] For each $\ell\in \left\{1,...,\lfloor (m^*+1)/2 \rfloor\right\}$ there exist ${\boldsymbol \lambda}^\pm$ and $\psi^{\pm}\left(x,{\boldsymbol k}^\pm\right)$ such that system~\eqref{pws4} has a bifurcating $L^{cri}(\ell)$ with exactly $\ell$ bifurcating tangent points.
\end{enumerate}
\label{thm3}
\end{thm}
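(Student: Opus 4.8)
The plan is to realize each bifurcating loop as a concrete perturbation of $L_*^{cri}$ inside system~\eqref{pws4}, using the unfolding to independently control (i) the number and visibility of the tangent points that survive on each side and (ii) the geometric matching of the orbit segments near $P$ and near the ``new'' tangent points so that they actually close up into a Jordan curve. First I would set up the return structure of $L_*^{cri}$: since by (H) the loop meets $\Sigma$ only at $P\in\Sigma_c$ and at $O$, I can parametrize orbit segments of the unperturbed upper and lower subsystems leaving a transversal arc of $\Sigma$ near $P$ and following them until first return to $\Sigma$ near $O$; because $P$ is a crossing point this return is a well-defined $C^\infty$ map, and near $O$ I get the usual ``tangent'' normal form $y\sim c\,(x-x_0)^{m^\pm}$ for the subsystem orbits hitting $\Sigma$. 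The key observation is that in \eqref{pws4} the term $\phi^\pm(x,y+\psi^\pm)\prod_{i=1}^{m^\pm}(x-\lambda_i^\pm)$ replaces $\phi^\pm x^{m^\pm}$, so the zeros of $g^\pm$ on $\Sigma$ are exactly $x=\lambda_i^\pm$; choosing the $\lambda_i^\pm$ real and simple makes all bifurcating tangent points non-degenerate of multiplicity $(0,1)$ or $(1,0)$, and by Theorem~\ref{thm1}(b)/Theorem~\ref{thm2} their visibilities alternate, so I can prescribe exactly how many are visible. The role of $\psi^\pm(x,{\boldsymbol k}^\pm)$ is then to act as the functional-function analogue of the classical displacement parameter: it shifts the phase portrait of each subsystem vertically by $\psi^\pm$ (the $-f^\pm\dot\psi^\pm$ correction is precisely what keeps the vector field tangent-compatible under the fibered translation $y\mapsto y+\psi^\pm$), so I can slide the upper and lower orbit arcs until the required endpoints coincide on $\Sigma$.

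For part (a), suppose $m^*:=\max\{m^+,m^-\}\ge 2$ and fix $\ell\in\{1,\dots,\lfloor m^*/2\rfloor\}$; WLOG $m^*=m^+$. The plan is to pick ${\boldsymbol\lambda}^+$ so that among the $m^+$ simple zeros $\lambda_i^+$ there are exactly $\ell$ that are visible folds lying in the ``interior'' of the relevant orbit arc of the upper subsystem, while keeping the lower subsystem's tangent point at $x=0$ destroyed (push the $\lambda_i^-$ off into $\Sigma_c$ or make the corresponding segment cross transversally) — this is exactly the configuration produced in Theorem~\ref{thm2} for a single bifurcating tangent orbit $\Gamma^+$ with $\ell$ visible tangent points. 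Because each such fold is visible, the orbit through it is locally a small arc in $\Sigma^+$ touching $\Sigma$ and returning; threading these $\ell$ arcs together with the big arc through $P$ produces a crossing periodic orbit that is tangent to $\Sigma$ exactly $\ell$ times, i.e., an element of $\mathcal L^{cro}$. I then use $\psi^+$ (and $\psi^-$, together with one $\lambda$-parameter controlling the landing point of the final segment near $P$, which stays in $\Sigma_c$) to enforce closure: this is a finite system of equations in the available parameters, and the transversality of $P$ guarantees the return map has nonzero derivative there, so an implicit-function argument gives a solution. Finally $m^*=1$ gives no room: both $\lfloor m^*/2\rfloor=0$, and any surviving tangent point is a non-degenerate fold, through which the only orbit is tangent once but then cannot reconnect without passing through $O$ again — so there is no bifurcating $L^{cro}$, matching the statement.

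For part (b) the construction is the same except that now I am allowed to keep one switching point at a tangent point, so I need $\ell$ tangent points total on the loop with at least one of them a genuine tangency (an invisible fold or a left/right tangent point on the ``corner'' rather than a visible interior touch). Concretely I keep one $\lambda_i^\pm$ at (or infinitesimally near) $x=0$ so that $L^{cri}(\ell)$ still passes through a tangent point there, and distribute the remaining $\ell-1$ simple zeros as visible folds along the arcs as before; since I only need to produce $\ell-1$ visible touches plus one corner tangency, and since $\lfloor(m^*+1)/2\rfloor$ is exactly the count from Theorem~\ref{thm1}/Theorem~\ref{thm2} in the visible/left/right case, the parameter budget suffices. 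Closure is again an implicit-function step at $P\in\Sigma_c$, and by Definition~\ref{df-loop} the resulting Jordan curve lies in $\mathcal L^{cri}$ because ${\mathcal P}_{ns}\setminus\Sigma_c\ne\emptyset$. I expect the main obstacle to be the \emph{simultaneous} realization: showing that one can choose ${\boldsymbol\lambda}^\pm$ to prescribe the visibility pattern \emph{and} still have enough independent freedom in $\psi^\pm$ to close the loop without accidentally creating extra intersections with $\Sigma$ (which would change the loop type) or destroying the tangencies just created. This is where the explicit functional functions $\psi^\pm$ really earn their keep: one must verify that the map from $({\boldsymbol\lambda}^\pm,{\boldsymbol k}^\pm)$ to (positions of tangent points on $\Sigma$, closure defect near $P$) is a submersion at the relevant point, so that the tangent-point data and the closure condition decouple — and checking this non-degeneracy, case by case over the $14$ loop types of Figure~\ref{Fig-Loop}, is the technical heart of the argument; the case analysis itself is routine once the right transversal arcs and the sign conventions from $(x,y,t)\to(x,-y,-t)$ are fixed.
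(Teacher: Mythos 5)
Your overall skeleton matches the paper's: use ${\boldsymbol \lambda}^+$ together with the construction of Theorem~\ref{thm2} to put exactly $\ell$ visible bifurcating tangent points on a single upper orbit, distinguish $L^{cro}$ (all switching points crossing, tangencies are non-switching touches) from $L^{cri}$ (the last tangency is itself a switching point), and then close the loop near $P\in\Sigma_c$. But the step you yourself flag as ``the technical heart'' --- showing that the tangency data and the closure condition can be realized \emph{simultaneously} --- is exactly the step you do not carry out, and the mechanism you propose for it (a submersion of the full map $({\boldsymbol\lambda}^\pm,{\boldsymbol k}^\pm)\mapsto(\text{tangent-point positions},\text{closure defect})$, with ``one $\lambda$-parameter controlling the landing point near $P$'') is not what makes the paper's argument work and would reintroduce the interference you worry about: moving any $\lambda^\pm_i$ to adjust the landing point changes the tangent-point configuration you just prescribed. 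Relatedly, ``push the $\lambda^-_i$ off into $\Sigma_c$'' is not available in unfolding~\eqref{pws4}: the $\lambda^-_i$ are real and small, so the lower subsystem always retains bifurcating tangent points near $O$; the paper simply keeps ${\boldsymbol\lambda}^-={\boldsymbol 0}$ and routes the lower arc of the loop so that its entry point $Q_a$ avoids them.

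The paper's closure argument is more elementary and fully decoupled, and this is the idea missing from your proposal. After fixing ${\boldsymbol\lambda}^+$ and $\psi^+$ once and for all (so the upper arc through the $\ell$ tangencies has fixed endpoints $(P^+_a,0)$ near $P$ and $(Q_a,0)$ near $O$), one writes a \emph{one-dimensional} displacement function $D(y;Q_a)$ in \eqref{Dydef} along the vertical section at $Q_a$, using the classical transition map \eqref{V-Expan} for the lower flow; since $P$ is a transversal crossing point, $V^-_1\ne0$ and $D$ has a simple zero $y_0$. The point $(Q_a,y_0)$ is not on $\Sigma$, so this does not yet close a loop of the PWS system; the decisive trick is to choose $\psi^-$ as the cutoff function \eqref{dkr12}, which equals the constant $y_0$ for $x\ge P_x/3$ (hence near $Q_a$ and near the lower tangent points, where moreover $\dot\psi^-\equiv0$, so their locations and multiplicities are untouched) and vanishes for $x\le 2P_x/3$ (hence near $P^+_a$). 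By Proposition~\ref{prop2} the lower orbit of \eqref{pws4} from $(Q_a,0)$ is the $\psi^-$-translate of the orbit of \eqref{tpws} from $(Q_a,y_0)$, so it lands exactly at $(P^+_a,0)$: closure is achieved by $\psi^-$ alone, with no effect on the tangency pattern, and no multi-parameter transversality needs to be checked. Without this localization-of-$\psi^-$ idea (or an equivalent decoupling), your implicit-function step remains an unproven claim. Two smaller points: the nonexistence of bifurcating $L^{cro}$ for $m^*=1$ is not proved by your one-line heuristic (the paper defers it to the cited references for $(m^+,m^-)=(1,0)$ and $(1,1)$), and your ranges $\ell\le\lfloor m^*/2\rfloor$ versus $\ell\le\lfloor(m^*+1)/2\rfloor$ should be justified by the parity bookkeeping of where the orbit can exit transversally after the last visible fold (between $\lambda^+_{2\ell}$ and $\lambda^+_{2\ell+1}$ for $L^{cro}$, at $\lambda^+_{2\ell-1}$ itself for $L^{cri}$).
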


The nonexistence of bifurcating $L^{cro}$ can be obtained in \cite{Ponce15,Han13} for $(m^+, m^-)=(1,0)$ and in \cite{Novaes18,Huang22} for $(m^+, m^-)=(1, 1)$. Theorem~\ref{thm3} is a generalization from $m^*=1$ to general $m^*$. For the number of bifurcating $L^{cro}$ and $L^{cri}$, we give the following results for $L^{cro}(1)$ and $L^{cri}(1)$, i.e., nonsliding loops with exactly $1$ bifurcating tangent point.

\begin{thm}
Assume that $m^{\pm}\ge 5$ and $L^{cri}_*$ is a nonsliding loop of system~\eqref{pws3} satisfying {\rm (H)}. Then for each $\ell\in\left\{0,...,\lfloor(m^*-1)/2\rfloor\right\}$ there exist ${\boldsymbol \lambda}^\pm$ and $\psi^{\pm}\left(x,{\boldsymbol k}^\pm\right)$ such that
\begin{equation*}
\left(\beta^{cro}, \beta^{cri}\right)=\left(\lfloor(m^*-1)/2\rfloor-\ell,\ell+1\right),
\end{equation*}
where $\beta^{cro},~\beta^{cri}$ are the numbers of bifurcating $L^{cro}(1), L^{cri}(1)$ of system~\eqref{pws4} respectively and $m^*:={\rm max}\{m^{\pm}\}.$
\label{thm4}
\end{thm}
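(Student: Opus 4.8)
The plan is to reduce Theorem~\ref{thm4} to a counting problem on the two displacement-type functions governing returns near $L_*^{cri}$, built from the functional functions $\psi^{\pm}$ and the root parameters ${\boldsymbol\lambda}^{\pm}$ of the unfolding \eqref{pws4}. First I would set up, as in the proofs of Theorems~\ref{thm2} and \ref{thm3}, the two local transition maps: the map $\pi^+$ along the upper subsystem from a transversal section near $P\in\Sigma_c$ back to $\Sigma$, and the map $\pi^-$ along the lower subsystem, each expressed in a coordinate $u$ measuring signed distance along $\Sigma$ from $O$. Because (H) forces $L_*^{cri}$ to meet $\Sigma$ only at $P\in\Sigma_c$ and at the tangent point $O$, a bifurcating loop with exactly one bifurcating tangent point corresponds to a simple zero of the composed displacement function $D(u):=\pi^-(\pi^+(u))-u$ together with the requirement that the orbit be tangent to $\Sigma$ at precisely one of the perturbed tangency points $\lambda^{\pm}_i$; whether the resulting loop lies in $\mathcal L^{cro}$ or $\mathcal L^{cri}$ is then decided by whether that tangency point sits in $\Sigma_c$ (giving an $L^{cro}(1)$) or in $\Sigma_s$ (giving an $L^{cri}(1)$), exactly as in the classification \eqref{class-Tan}.

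Next I would use the explicit structure of \eqref{pws4}: the factor $\prod_{i=1}^{m^{\pm}}(x-\lambda^{\pm}_i)$ lets one place the $m^{\pm}$ perturbed tangency points freely on $\Sigma$, while the functional function $\psi^{\pm}(x,{\boldsymbol k}^{\pm})$ — which by construction satisfies $\psi^{\pm}(x,{\boldsymbol 0})\equiv 0$ and plays the role formerly played by scalar functional parameters — can be chosen, via the analysis of section~4, so that the composed displacement $D(u)$ vanishes identically along the relevant arc (so that $L_*^{cri}$ persists as a genuine bifurcating loop rather than breaking into crossing limit cycles). With $D\equiv 0$ arranged, each perturbed tangency point $\lambda^{\pm}_i$ that the surviving loop actually passes through contributes one bifurcating tangent point on that loop; requiring exactly one such point pins down which $\lambda^{\pm}_i$ is crossed and which side of $\Sigma$ the remaining $m^*-1$ tangency points are pushed to. Counting: since $O$ has multiplicity $m^*=\max\{m^{\pm}\}$ on the dominant side and the perturbed tangencies come in the alternating visible/invisible pattern of Theorem~\ref{thm1}(b), of the $m^*$ perturbed tangency points on that side one is used for the single tangent point of the loop, and the remaining $m^*-1$ split into pairs: a pair straddling a sub-arc of $\Sigma_c$ produces a little $L^{cro}(1)$, while a pair straddling a sub-arc of $\Sigma_s$ (equivalently, a small sliding segment) produces an $L^{cri}(1)$. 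Distributing the $\lfloor(m^*-1)/2\rfloor$ available pairs as $\lfloor(m^*-1)/2\rfloor-\ell$ of the first kind and $\ell+1$ of the second kind — the extra ``$+1$'' being $L_*^{cri}$ itself counted among the bifurcating $L^{cri}(1)$ — gives precisely $(\beta^{cro},\beta^{cri})=(\lfloor(m^*-1)/2\rfloor-\ell,\ell+1)$. The hypothesis $m^{\pm}\ge 5$ guarantees $\lfloor(m^*-1)/2\rfloor\ge 2$, so the range of $\ell$ is nonempty and the construction has enough room to realize every such split.

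For the realization part I would write down the concrete choice: take ${\boldsymbol\lambda}^{+}$ so that $\lambda^+_1$ (say) lies at the abscissa along which the loop leaves $\Sigma$ into $\Sigma^+$ and the other $\lambda^+_i$ are grouped into $\ell$ tight pairs placed just inside $\Sigma_s$ and $\lfloor(m^*-1)/2\rfloor-\ell$ tight pairs placed just inside $\Sigma_c$, with ${\boldsymbol\lambda}^-$ chosen on the lower side analogously (or trivially, via the empty-product convention, if $m^-<m^+$), and then invoke the section~4 analysis to select ${\boldsymbol k}^{\pm}$ so that $\psi^{\pm}(\cdot,{\boldsymbol k}^{\pm})$ makes the composite return map the identity on the relevant arc. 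One checks that each inserted pair genuinely creates a separate small loop of the claimed type by a transversality/sign argument on $h(x)=g^+(x,0)g^-(x,0)$ near the pair — exactly the local picture of a visible–invisible tangent pair bounding a crossing or sliding micro-loop — and that distinct pairs give distinct (disjoint) loops because they sit over disjoint sub-arcs of $\Sigma$.

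The main obstacle I anticipate is the compatibility bookkeeping between the two functional functions $\psi^{+}$ and $\psi^{-}$: forcing the full composite displacement $D(u)=\pi^-(\pi^+(u))-u$ to vanish identically while simultaneously prescribing the positions and the $\Sigma_c$-versus-$\Sigma_s$ placement of all $m^++m^-$ perturbed tangency points is an over-determined-looking requirement, and one must verify (using the freedom in the jet of $\psi^{\pm}$ up to order $m^{\pm}$, minus the one constraint $\partial^{m^{\pm}}\Upsilon^{\pm}/\partial x^{m^{\pm}}\equiv 0$ already absorbed into \eqref{pws3}) that the parameter count works out, i.e. that there are enough independent coefficients in $({\boldsymbol\lambda}^{\pm},{\boldsymbol k}^{\pm})\in\mathbb{R}^{n^{\pm}}$ to hit every targeted configuration. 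This is precisely where the generality of the tangency degree — and the interlacing of sliding and crossing regions it creates on $\Sigma$, flagged in the introduction — makes the argument delicate, and where the section~4 lemmas on functional functions must be applied with care rather than quoted blindly.
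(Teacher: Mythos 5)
Your overall strategy (unfold $O$ into $m^+$ tangent points via ${\boldsymbol\lambda}^+$, then use $\psi^{\pm}$ to steer which orbits meet which tangent points) is in the right spirit, but three of your key steps would fail. First, you propose to choose $\psi^{\pm}$ so that the composite displacement $D(u)=\pi^-(\pi^+(u))-u$ \emph{vanishes identically} on an arc. That is neither achievable with the finite families at hand nor desirable: it would produce a continuum of crossing periodic orbits rather than isolated loops, and it is not what the theorem needs. The paper instead defines a displacement function ${\cal Y}(x)=Y\left(P^-(x),x\right)-\psi^+\left(x,{\boldsymbol k}^+\right)$ on $[\lambda_1^+,0]$ and prescribes its value at finitely many points by directly setting the plateau heights $k^+_{i+2d+1}$ of the functional function equal to $Y\left(P^-(\lambda^+_{2i-1}),\lambda^+_{2i-1}\right)$ (or to nearby values), so that each visible bifurcating tangent point $\left(\lambda^+_{2i-1},0\right)$ carries exactly one bifurcating loop; no identity return map is ever imposed. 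Second, your criterion for deciding $L^{cro}(1)$ versus $L^{cri}(1)$ --- ``whether that tangency point sits in $\Sigma_c$ or in $\Sigma_s$'' --- misreads the definitions: a tangent point satisfies $h(x_0)=0$ and lies in neither open region. The actual dichotomy is whether the tangent point is a \emph{switching point} of the loop (then $L^{cri}$, by \eqref{class-Tan}) or the loop merely grazes there while all its switching points are crossing points (then $L^{cro}$); the paper realizes the latter by arranging, via a stability/Intermediate Value argument using Lemma~\ref{plm1}, a crossing limit cycle through some $(Q,0)$ and then lifting $\psi^+$ at $\lambda^+_{2i-1}$ so that this crossing loop passes tangentially through $\left(\lambda^+_{2i-1},0\right)$.

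Third, your counting mechanism is wrong. You allocate one tangent point to ``the loop'' and split the remaining $m^*-1$ into pairs, each pair bounding a disjoint micro-loop over a sub-arc of $\Sigma$. But the objects counted by $\beta^{cro},\beta^{cri}$ are loops bifurcating from $L^{cri}_*$, hence Hausdorff-close to the whole of $L^{cri}_*$ (passing near both $P$ and $O$); they are \emph{nested} large loops as in \eqref{Nest1}, one through each of the $(m^*+1)/2$ visible bifurcating tangent points $\left(\lambda^+_{2i-1},0\right)$, not small loops over disjoint sub-arcs near $O$. Consequently the ``$+1$'' in $\beta^{cri}=\ell+1$ does not come from ``$L^{cri}_*$ itself persisting'' (it does not persist); it comes from the fact that there are $(m^*+1)/2=\lfloor(m^*-1)/2\rfloor+1$ visible tangent points to distribute between the two types. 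Your micro-loop picture also leaves unaddressed why a pair of upper tangent points should bound a closed orbit at all, which requires the lower subsystem's return to cooperate. Finally, you never invoke the degenerate transition-map expansion $V=V_{m+1}r^{m+1}+O\left(r^{m+2}\right)$ of Lemma~\ref{plm1}, which is what controls ${\cal Y}$ near the tangent points and underlies the estimate \eqref{Estimate-Y}; without it the sign and stability arguments that convert an $L^{cri}(1)$ into an $L^{cro}(1)$ plus a crossing cycle cannot be carried out.
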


In Theorem~\ref{thm4} the relation between the numbers of $L^{cro}(1), L^{cri}(1)$ and degeneration of tangent point $O$ is obtained for $m^{\pm}\ge 5$ but not for ${\rm min}\left\{m^{\pm}\right\}<5$. The main reason is that only $m^{\pm}\ge 5$ can guarantee the analysis is strictly under the framework of bifurcations, i.e., the vector field of \eqref{pws4} lies in a small neighborhood of the vector field of \eqref{pws3}.

In the following theorem, we provide the numbers of bifurcating $L_c$ and $L_s$.

\begin{thm} Assume that $L^{cri}_*$ is a nonsliding loop of system~\eqref{pws3} satisfying {\rm (H)}. Then there exist ${\boldsymbol \lambda}^{\pm}$ and $\psi^{\pm}\left(x,{\boldsymbol k}^\pm\right)$ in system~\eqref{pws4} such that $\beta_c+\beta_s\ge 1$. In the case that $m^{\pm}\ge 5$, for each $\ell\in\left\{0,...,\lfloor(m^*+1)/2\rfloor\right\}$ there exist ${\boldsymbol \lambda}^{\pm}$ and $\psi^{\pm}\left(x,{\boldsymbol k}^\pm\right)$ such that
   \begin{equation*}
   \beta_c\ge m^*-\lfloor(m^*+1)/2\rfloor+\ell,~\beta_s=\lfloor(m^*+1)/2\rfloor-\ell.
   \end{equation*}
Here $\beta_c, \beta_s$ denote the numbers of bifurcating $L_c, L_s$ of system~\eqref{pws4} respectively and $m^*:={\rm max}\{m^{\pm}\}$.
\label{thm5}
\end{thm}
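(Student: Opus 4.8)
\textbf{Proof proposal for Theorem~\ref{thm5}.}
The plan is to reduce the statement to a zero-counting problem for a one-dimensional displacement function attached to $L^{cri}_*$, and then to use the functional functions $\psi^{\pm}$ to prescribe both the number of its zeros and the sliding-versus-crossing type of each. Since $P\in\Sigma_c$, the flow of \eqref{pws3} crosses $\Sigma$ transversally at $P$, so a small arc $\sigma\subset\Sigma$ through $P$ is a cross-section of $L^{cri}_*$; for \eqref{pws4} with $({\boldsymbol\lambda}^{\pm},{\boldsymbol k}^{\pm})$ near ${\boldsymbol 0}$ I would write the first-return map on $\sigma$ as
\[
\Pi=\Pi^{-}\circ T\circ\Pi^{+},
\]
where $\Pi^{+}$ and $\Pi^{-}$ are the transitions along the two regular orbit arcs of the loop in $\overline{\Sigma^{+}}$ and $\overline{\Sigma^{-}}$, and $T$ is the local transition across a neighbourhood of the tangent point(s) into which $O$ breaks. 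The maps $\Pi^{\pm}$ are diffeomorphisms that depend smoothly on the parameters and are $C^{1}$-close to the regular transition maps of \eqref{pws3}, so all the degeneracy is concentrated in $T$. The displacement $d:=\Pi-\mathrm{id}$ vanishes at $O$ for the unperturbed system; since every bifurcating loop of $L^{cri}_*$ lies in a small neighbourhood of $L^{cri}_*$ which contains no standard equilibrium, each simple zero of $d$ yields exactly one bifurcating $L_c$ when the corresponding orbit meets $\Sigma$ only inside $\Sigma_c$, and exactly one bifurcating $L_s$ when it traverses a regular sliding segment near $O$, while no $L^{gra}$ occurs. Thus $\beta_c$ and $\beta_s$ count exactly the crossing-type and sliding-type simple zeros of $d$.

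For the first assertion $\beta_c+\beta_s\ge 1$ I would choose ${\boldsymbol\lambda}^{\pm},\psi^{\pm}$ so that $O$ breaks into a single non-degenerate (fold) tangent point, which is possible by Theorem~\ref{thm1}(c) with $\ell=1$ (and is vacuous when $O$ is already a fold). Then $L^{cri}_*$ becomes a critical loop through a fold, and \cite{Ponce15,Kuznetsov03,Han13} give that exactly one $L_c$ or one $L_s$ bifurcates from it, hence $\beta_c+\beta_s\ge 1$; equivalently, a generic leading perturbation makes $d$ change sign across $O$, so $d$ has at least one zero.

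For the sharp count when $m^{\pm}\ge 5$, assume $m^{+}=m^{*}$, the case $m^{-}=m^{*}$ being symmetric under $(x,y,t)\mapsto(x,-y,-t)$. Using Theorem~\ref{thm2} I would first pick ${\boldsymbol\lambda}^{+}$ so that the upper subsystem of \eqref{pws4} has $\lfloor(m^{*}+1)/2\rfloor$ visible bifurcating tangent points clustered near $O$; between two consecutive ones $\Sigma$ alternates between a crossing piece and a sliding piece, so $T$ decomposes into finitely many monotone branches, those passing through a sliding piece carrying potential $L_s$ and those remaining in $\Sigma_c$ carrying potential $L_c$. I would then invoke the analysis of the functional functions from Section~4 to select $\psi^{+}$ (and, if needed, $\psi^{-}$) so that the return heights of these branches are shifted independently and the graph of $\Pi$ crosses the diagonal transversally in at least $m^{*}$ points near $L^{cri}_*$, with exactly $\lfloor(m^{*}+1)/2\rfloor-\ell$ crossings on sliding branches and at least $m^{*}-\lfloor(m^{*}+1)/2\rfloor+\ell$ on crossing branches; simplicity of the zeros makes the corresponding loops persist and be counted as in Theorems~\ref{thm3}--\ref{thm4}, which yields $\beta_s=\lfloor(m^{*}+1)/2\rfloor-\ell$ and $\beta_c\ge m^{*}-\lfloor(m^{*}+1)/2\rfloor+\ell$. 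The hypothesis $m^{\pm}\ge 5$ enters exactly as in Theorem~\ref{thm4}: only then is the contact order large enough to realise all these configurations with ${\boldsymbol\lambda}^{\pm},{\boldsymbol k}^{\pm}$ small, so that \eqref{pws4} stays in a small neighbourhood of \eqref{pws3} and the word ``bifurcating'' is legitimate.

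The main obstacle will be the local transition map $T$ near the perturbed $O$, that is, controlling the interlacement of sliding and crossing motions: one must follow an orbit that enters the neighbourhood of $O$, possibly enters a sliding piece, slides to the next escaping tangent point, leaves along a regular arc, and so on; show that each resulting branch of $T$ is monotone with a controllable image; and prove that the functional functions $\psi^{\pm}$ of Section~4 can push these branches to produce exactly the prescribed number of transversal intersections with the return diagonal while keeping every parameter small. Two further delicate points are verifying that a zero attributed to $L_s$ really closes into a sliding loop (the sliding orbit must reach an escaping tangent point rather than terminate at a pseudo-equilibrium or leave the neighbourhood) and that the constructions for different $\ell$, and for crossing versus sliding branches, do not interfere; everything else reduces to bookkeeping with the transition diffeomorphisms $\Pi^{\pm}$.
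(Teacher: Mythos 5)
Your overall strategy (cross-section at $P$, displacement function, use the functional functions to tune branches independently) is in the same spirit as the paper's proof, but there are two concrete gaps. First, your argument for $\beta_c+\beta_s\ge 1$ rests on breaking $O$ into ``a single non-degenerate (fold) tangent point'' via Theorem~\ref{thm1}(c) with $\ell=1$. That is not what Theorem~\ref{thm1}(c) gives: with $\ell=1$ and both equalities in \eqref{maxi-sum} holding, the single bifurcating tangent point has full multiplicity $(m^+,m^-)$, i.e.\ it is as degenerate as $O$ itself. Moreover, Theorem~\ref{thm5} requires the perturbation to lie in the family \eqref{pws4}, where $\prod_{i=1}^{m^\pm}(x-\lambda^\pm_i)$ always has $m^\pm$ real roots, so you can never reduce to a single fold; the other bifurcating tangent points are unavoidably present and the fold results of \cite{Ponce15,Kuznetsov03,Han13} cannot be invoked verbatim. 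The paper instead first produces (via Theorem~\ref{thm3}(b) with $\ell=1$) a bifurcating $L^{cri}(1)$ passing through \emph{one} of the bifurcating visible folds $(\lambda^+_1,0)$, and then breaks that loop by shifting the single parameter $k^-_4$ to $y_0\mp\alpha$, computing the sign of $V^-_1$ and, for the sliding case, explicitly checking with the sliding vector field \eqref{SVF} that the backward sliding orbit from $(Q_s,0)$ reaches $(\lambda^+_1,0)$. This last verification cannot be folded into ``a simple zero of $d$'': through a sliding segment the return map is locally constant, so sliding loops are not simple zeros of any displacement function, a point you flag but do not resolve.

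Second, for $m^\pm\ge 5$ the quantitative claim $\beta_c\ge m^*-\lfloor(m^*+1)/2\rfloor+\ell$ is asserted rather than derived: you say the functional functions can be chosen so that $\Pi$ crosses the diagonal transversally in at least $m^*$ points with the prescribed split, but that is essentially the conclusion. The paper's mechanism is specific: starting from the \emph{nested} configuration \eqref{Nest1} of $\lfloor(m^*+1)/2\rfloor$ critical loops $L^{cri}_{2i-1}$ built in Theorem~\ref{thm4}, each $k^+_{i+2d+1}$ is shifted by $\pm\alpha_i$ so that $L^{cri}_{2i-1}$ breaks into either an $L_s$ or an \emph{unstable} $L_c$; the extra $\lfloor(m^*+1)/2\rfloor-1$ crossing limit cycles needed to reach the total $m^*$ are then extracted one by one from the annular regions between consecutive broken loops, using their instability together with an intermediate-value argument on $[\lambda^+_{2i},\lambda^+_{2i+1}]$. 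Without the nesting and the stability information, your monotone-branch picture does not explain where these intermediate cycles come from, nor why the constructions for different branches do not interfere --- which you yourself list as the main obstacle. As it stands the proposal is a plausible roadmap, not a proof.
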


Theorem~\ref{thm5} mainly gives a lower bound of $\beta_c+\beta_s$ and the coexistence
of  bifurcating $L_c, L_s$ for system~\eqref{pws4} with $m^{\pm}\ge 5$. Clearly, $\beta_c+\beta_s\ge m^*$ no matter $m^*$ is odd or even.
For the case $(m^+,m^-)=(1,0), (0,1)$, the uniqueness of $L_c, L_s$ bifurcated from $L^{cri}_*$ is proved in \cite{Ponce15,Kuznetsov03,Han13},
i.e., the sum of the numbers of bifurcating $L_c, L_s$ is $1$. For the case $(m^+,m^-)=(1,1)$,
it is proved in \cite{Han13,Novaes18,Huang22} that the sum of the numbers of bifurcating $L_c, L_s$ from $L^{cri}_*$ is at least $2$.

\section{Explicit unfolding of system~\eqref{pws3}}
\setcounter{equation}{0}
\setcounter{lm}{0}
\setcounter{thm}{0}
\setcounter{rmk}{0}
\setcounter{df}{0}
\setcounter{cor}{0}

In this section, some precise statements concluding conception of bifurcations
for general system~\eqref{pws1} and definition of $\psi^{\pm}(x,{\boldsymbol k}^\pm)$ in unfolding~\eqref{pws4} are given.
Thus, we provide some explicit unfolding systems for \eqref{pws3}, which are used
in the next section to prove our main results.

Let $\chi(x,y), \widetilde\chi(x,y)$ be two piecewise-smooth vector fields of form \eqref{pws1} in ${\cal U}$ with switching manifold $y=0$.
Their upper vector fields and lower ones are functions
\begin{equation*}
\begin{aligned}
&\chi^\pm(x,y):=\left(\chi^\pm_1(x,y),\chi^\pm_2(x,y)\right)^\top\in C^\infty\left(\mathbb{R}^2,\mathbb{R}^2\right),\\
&\widetilde\chi^\pm(x,y):=\left(\widetilde\chi^\pm_1(x,y),\widetilde\chi^\pm_2(x,y)\right)^\top\in C^\infty\left(\mathbb{R}^2,\mathbb{R}^2\right).
\end{aligned}
\end{equation*}
Define the distance of piecewise-smooth systems having vector fields $\chi(x,y), \widetilde\chi(x,y)$ as
\begin{eqnarray}
\rho\left(\chi,\widetilde\chi\right):=d\left(\chi^+_1,\widetilde\chi^+_1\right)+d\left(\chi^+_2,\widetilde\chi^+_2\right)+d\left(\chi^-_1,\widetilde\chi^-_1\right)+d\left(\chi^-_2,\widetilde\chi^-_2\right),
\label{dist}
\end{eqnarray}
where
\begin{equation*}
d\left(X, Y\right):=\max_{(x,y)\in\bar{\cal U}}\left\{\sum_{i_1+i_2=0}^{1}\left|\frac{\partial^{i_1+i_2} (X-Y)}{\partial x^{i_1}\partial y^{i_2}}\right|\right\}
\end{equation*}
for $X(x,y),Y(x,y)\in C^\infty\left(\mathbb{R}^2,\mathbb{R}\right)$.

The appearance of nonequivalent phase portraits under the variation of vector fields of PWS system~\eqref{pws1} is called bifurcation. The ``nonequivalent'' is the opposite conception of {\it $\Sigma$-equivalent} defined in \cite{Teixeira11} and the variation is required to be small under the meaning of distance $\rho$. In the following we define $\psi^{\pm}(x,{\boldsymbol k}^{\pm})$ appearing in system~\eqref{pws4} and give a sufficient condition such that the variation of vector fields is sufficiently small as $\left({\boldsymbol \lambda}^{\pm},{\boldsymbol k}^{\pm}\right)$ are sufficiently small.

Consider $C^\infty$ cutoff functions (see, e.g., \cite{Lee18})
\begin{equation*}
h^*(x,r_1,r_2):=\left\{
\begin{aligned}
&0,&&x\in (-\infty,r_1], \\
&\frac{1}{1+e^{\eta(x)}},&&x\in(r_1,r_2), \\
&1,&&x\in[r_2,+\infty),
\end{aligned}
\right.~~
h_*(x,r_1,r_2):=\left\{
\begin{aligned}
&1,&&x\in(-\infty,r_1], \\
&\frac{e^{\eta(x)}}{1+e^{\eta(x)}},&&x\in(r_1,r_2), \\
&0,&&x\in[r_2,+\infty),
\end{aligned}
\right.
\end{equation*}
where $r_1<r_2$ and
\begin{equation*}
\eta(x):=\frac{1}{x-r_1}+\frac{1}{x-r_2}.
\end{equation*}
Let
\begin{equation*}
{\cal K}:=\bigcap_{i=1}^{2d}\left\{{\boldsymbol k}\in{\mathbb R}^{3d+1}:~k_i< k_{i+1}\right\},
\end{equation*}
where integer $d>0$. For $x\in\mathbb{R}$ and any integer $d>0$, we define
\begin{eqnarray}
\psi(x,{\boldsymbol k}):=
\left\{
\begin{aligned}
&\psi^*(x,{\boldsymbol k}), &&{\rm when}~{\boldsymbol k}\in{\cal K},\\
&k_{2d+2}h^*(x,r_1,r_2),~~~~~~&&{\rm when}~{\boldsymbol k}\in\mathbb{R}^{3d+1}\setminus {\cal K},
\end{aligned}
\right.
\label{psidef}
\end{eqnarray}
where
\begin{equation*}
\psi^*(x,{\boldsymbol k}):=
\left\{
\begin{aligned}
& k_{2d+1+i}h^*\left(x,k_{2i-1},k_{2i}\right),~~~~&&x\in\left(k_{2i-1},k_{2i}\right],\\
& k_{2d+1+i}h_*\left(x,k_{2i},k_{2i+1}\right),~~~~&&x\in\left(k_{2i},k_{2i+1}\right],\\
& 0,&&{\rm otherwise}
\end{aligned}
\right.
\end{equation*}
and $i=1,...,d$. It is not hard to check that
$\psi\left(x,{\boldsymbol k}\right)$ is $C^{\infty}$ in $x$ by analyzing $h^*$ and $h^*$.
Moreover, derivatives of $\psi^*\left(x,{\boldsymbol k}\right)$ of any orders with respect to $x$ are all zero at $k_i$ ($i=1,...,2d+1$).
Some examples of $\psi(x,{\boldsymbol k})$ are shown in Figure~\ref{Fig-psi}.

\begin{figure}[htp]
\centering
\subfigure[ ${\boldsymbol k}\in{\cal K}$ and $d=2$ ]
 {
  \scalebox{0.43}[0.43]{
   \includegraphics{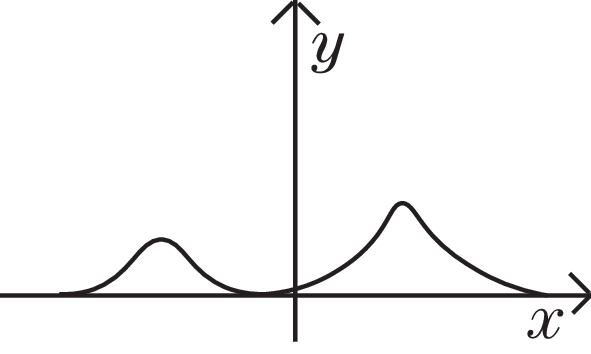}}}~~~~
\subfigure[ ${\boldsymbol k}\in\mathbb{R}^{3d+1}\setminus {\cal K}$ and $k_{2d+2}>0$]
 {
  \scalebox{0.43}[0.43]{
   \includegraphics{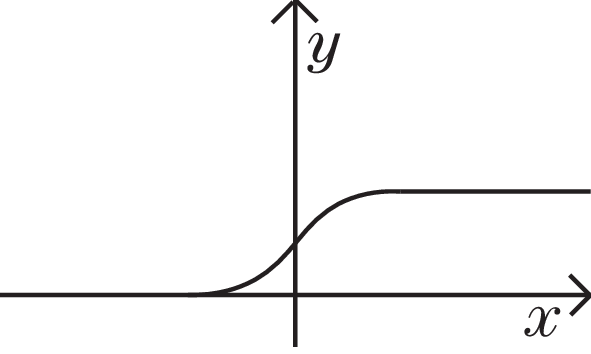}}}
\caption{ examples of $\psi(x;{\boldsymbol k})$}
\label{Fig-psi}
\end{figure}

\begin{prop}
If either ${\boldsymbol k}\in \mathbb{R}^{3d+1}\setminus {\cal K}$ or ${\boldsymbol k}\in {\cal K}$ satisfying
\begin{equation}
k_{i+2d+1}=o\left(\left(k_{2i}-k_{2i-1}\right)^4\right),~~~k_{2i+1}-k_{2i}=k_{2i}-k_{2i-1},~~{\rm~for~}i=1,...,d,
\label{K}
\end{equation}
then
$$
\psi(x,{\boldsymbol k})\rightrightarrows0,~~~\dot\psi(x,{\boldsymbol k})\rightrightarrows0,~~~\ddot\psi(x,{\boldsymbol k})\rightrightarrows0
$$
as ${\boldsymbol k}\to{\boldsymbol 0}$, where $\dot \psi, \ddot\psi$ mean the first and second derivatives with respect
to $x$ respectively and ``$\rightrightarrows$'' denotes uniform convergence.
\label{proppsi}
\end{prop}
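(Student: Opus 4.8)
The plan is to analyze the two cases in the definition \eqref{psidef} of $\psi$ separately, reducing everything to uniform estimates on the cutoff functions $h^*, h_*$ and their first two $x$-derivatives. For the case ${\boldsymbol k}\in\mathbb{R}^{3d+1}\setminus{\cal K}$ one has $\psi(x,{\boldsymbol k})=k_{2d+2}h^*(x,r_1,r_2)$ with $r_1,r_2$ \emph{fixed}; hence $h^*(\cdot,r_1,r_2)$ together with $\dot h^*$ and $\ddot h^*$ are fixed $C^\infty$ functions, bounded on $\bar{\cal U}$ by some constant $C=C(r_1,r_2)$, and $\psi,\dot\psi,\ddot\psi$ are all bounded in absolute value by $C|k_{2d+2}|\to 0$ as ${\boldsymbol k}\to{\boldsymbol 0}$. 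This case is immediate.

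The substantive case is ${\boldsymbol k}\in{\cal K}$, where $\psi=\psi^*$ is a sum over $i=1,\dots,d$ of terms supported on $(k_{2i-1},k_{2i+1}]$, namely $k_{2d+1+i}h^*(x,k_{2i-1},k_{2i})$ on $(k_{2i-1},k_{2i}]$ and $k_{2d+1+i}h_*(x,k_{2i},k_{2i+1})$ on $(k_{2i},k_{2i+1}]$. Since the supports for distinct $i$ are disjoint (by ${\boldsymbol k}\in{\cal K}$) and the derivatives of $h^*, h_*$ of all orders vanish at the endpoints $k_j$, it suffices to estimate, for each fixed $i$, the sup-norms of the single bump $k_{2d+1+i}h^*(x,k_{2i-1},k_{2i})$ and its first two derivatives; the analysis of the $h_*$-piece is identical after the reflection $x\mapsto -x$ plus a shift, and it is here that the second hypothesis $k_{2i+1}-k_{2i}=k_{2i}-k_{2i-1}$ in \eqref{K} is used, so the two half-widths coincide and can be denoted $\delta_i:=k_{2i}-k_{2i-1}$. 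First I would record the scaling behavior: writing $h^*(x,r_1,r_2)=H\!\left((x-r_1)/(r_2-r_1)\right)$ for the fixed profile $H(u):=1/(1+e^{1/u+1/(u-1)})$ on $(0,1)$ (extended by $0,1$), one gets $\dot h^* = (r_2-r_1)^{-1}H'$ and $\ddot h^*=(r_2-r_1)^{-2}H''$, where $H',H''$ are bounded on $[0,1]$ because the exponential kills all polynomial blow-up of $\eta$ and its derivatives at the endpoints. Applying this with $r_2-r_1=\delta_i$ yields
\begin{equation*}
\|h^*(\cdot,k_{2i-1},k_{2i})\|_\infty\le 1,\qquad
\|\dot h^*(\cdot,k_{2i-1},k_{2i})\|_\infty\le \frac{\|H'\|_\infty}{\delta_i},\qquad
\|\ddot h^*(\cdot,k_{2i-1},k_{2i})\|_\infty\le \frac{\|H''\|_\infty}{\delta_i^{2}}.
\end{equation*}

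Multiplying by the coefficient $k_{2d+1+i}$ and invoking the first hypothesis in \eqref{K}, $k_{2d+1+i}=o(\delta_i^{4})$, I obtain that the $i$-th bump contributes $o(\delta_i^{4})$, $o(\delta_i^{3})$, $o(\delta_i^{2})$ to the sup-norms of $\psi,\dot\psi,\ddot\psi$ respectively; in particular each is $o(1)$ as ${\boldsymbol k}\to{\boldsymbol 0}$ (note $\delta_i\to 0$ since $k_{2i-1},k_{2i}\to 0$). Because at every point $x$ at most one value of $i$ is active and the $C^\infty$ matching at the $k_j$ makes the piecewise formula genuinely smooth, summing (or rather taking the max) over the $d$ terms preserves the $o(1)$ bound, giving $\psi\rightrightarrows 0$, $\dot\psi\rightrightarrows0$, $\ddot\psi\rightrightarrows0$ on $\bar{\cal U}$. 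I expect the main obstacle to be the bookkeeping at the junction points $k_j$: one must verify carefully that the one-sided derivatives of the two adjacent pieces ($h^*$-bump on the left, $h_*$-bump on the right of $k_{2i}$, and the zero function on either side of $k_{2i-1}$ and $k_{2i+1}$) agree to all orders, which is exactly the stated property that derivatives of $\psi^*$ of all orders vanish at $k_1,\dots,k_{2d+1}$; granting this, the convergence reduces to the elementary scaling estimate above, and the exponent $4$ in \eqref{K} is seen to be what makes even the second-derivative term $o(\delta_i^2)=o(1)$ with room to spare.
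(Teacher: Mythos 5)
Your treatment of the case ${\boldsymbol k}\in\mathbb{R}^{3d+1}\setminus{\cal K}$ is correct and matches the paper: there $r_1,r_2$ are fixed, so $h^*$ and its first two derivatives are bounded by a constant independent of ${\boldsymbol k}$ and everything is controlled by $|k_{2d+2}|$. Your reduction of the case ${\boldsymbol k}\in{\cal K}$ to a single bump (disjoint supports, symmetry of the $h_*$ piece via the equal half-widths, smooth matching at the $k_j$) is also fine.

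However, the key scaling identity you rely on is false, and it makes your derivative exponents wrong. You write $h^*(x,r_1,r_2)=H\bigl((x-r_1)/(r_2-r_1)\bigr)$ with the \emph{fixed} profile $H(u)=1/(1+e^{1/u+1/(u-1)})$. But with $u=(x-r_1)/\delta$, $\delta=r_2-r_1$, one has $\eta(x)=\frac{1}{x-r_1}+\frac{1}{x-r_2}=\frac{1}{\delta}\bigl(\frac{1}{u}+\frac{1}{u-1}\bigr)$, so the exponent itself carries a factor $\delta^{-1}$ and the profile is $\delta$-dependent; $h^*$ is \emph{not} self-similar. Computing directly, $\dot h^*=\frac{e^{\eta}}{(1+e^{\eta})^2}\bigl(\frac{1}{(x-r_1)^2}+\frac{1}{(x-r_2)^2}\bigr)$, whose value at the midpoint is $\frac14\cdot\frac{8}{\delta^2}=2\delta^{-2}$; so $\|\dot h^*\|_\infty=O(\delta^{-2})$, not $O(\delta^{-1})$, and similarly $\|\ddot h^*\|_\infty=O(\delta^{-4})$, not $O(\delta^{-2})$ (the dominant term is $\nu_2^2\sim\delta^{-4}$). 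This is exactly what the paper's computation gives: $|\dot\psi|\le 8|k_{2d+2}|(k_2-k_1)^{-2}$ and $|\ddot\psi|\le M_2|k_{2d+2}|(k_2-k_1)^{-4}$. Your final conclusion survives, because $k_{2d+1+i}=o(\delta_i^4)$ still yields $o(\delta_i^2)$ and $o(1)$ for the first and second derivatives respectively — but your closing remark that the exponent $4$ leaves ``room to spare'' is precisely the symptom of the error: the exponent $4$ is sharp for the second derivative (with $k_{2d+1+i}=o(\delta_i^2)$, which would suffice under your scaling, the statement for $\ddot\psi$ fails). To repair the argument, keep the rescaling but separate the uniformly bounded exponential factor $e^{\eta}/(1+e^{\eta})^2\le 1/4$ (whose endpoint decay, uniform in $\delta$, kills the poles of $\nu_2,\nu_3$) from the algebraic prefactors $\nu_2\sim\delta^{-2}$, $\nu_3\sim\delta^{-3}$, $\nu_2^2\sim\delta^{-4}$, which do not rescale to a fixed profile.
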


\begin{proof}
Consider $\psi\left(x,{\boldsymbol k}\right)$ with ${\boldsymbol k}\in{\mathbb R}^{3d+1}\setminus {\cal K}$ firstly.
We get $\psi(x,{\boldsymbol k})\le |k_{2d+2}|$, which implies that
$\psi\left(x,{\boldsymbol k}\right)\rightrightarrows 0$ as ${\boldsymbol k}\to {\boldsymbol 0}$.
It is not hard to check that $h^*(x,r_1,r_2)$ has bounded derivative, which is independent from
${\boldsymbol k}$. Thus, there exists
$M_1>0$ independent from ${\boldsymbol k}$
such that $\dot\psi(x,{\boldsymbol k})\le \left|k_{2d+2}\right|M_1$. This implies that
$\dot\psi(x,{\boldsymbol k})\rightrightarrows 0$ as ${\boldsymbol k}\to {\boldsymbol 0}$.
Similarly, by the definition of $h^*$ we can prove that
$\ddot\psi(x,{\boldsymbol k})\rightrightarrows 0$ as ${\boldsymbol k}\to {\boldsymbol 0}$.

Now consider $\psi\left(x,{\boldsymbol k}\right)$ with ${\boldsymbol k}\in{\cal K}$ satisfying~\eqref{K}.
By \eqref{psidef} we get $\psi\left(x,{\boldsymbol k}\right) \le \max\left\{\left|k_{2d+2}\right|,...,\left|k_{3d+1}\right|\right\}$, which implies
$\psi\left(x,{\boldsymbol k}\right)\rightrightarrows0$ as ${\boldsymbol k}\to {\boldsymbol 0}$.
Straight computation shows that for $x\in (k_1,k_2)$
$$
\dot\psi(x,{\boldsymbol k})=k_{2d+2}\nu_2(x)\frac{e^{\nu_1(x)}}{\left(1+e^{\nu_1(x)}\right)^2},~~~~~
\ddot\psi(x,{\boldsymbol k})=k_{2d+2}R_1(x)\frac{e^{\nu_1(x)}}{\left(1+e^{\nu_1(x)}\right)^2}
$$
and $\dot\psi(k_{1,2},{\boldsymbol k})=\ddot\psi(k_{1,2},{\boldsymbol k})=0$,
where
$R_1(x)=-\nu_3(x)-\nu_2^2(x)+2\nu_2^2(x){e^{\nu_1(x)}}/({1+e^{\nu_1(x)}})$,
\begin{eqnarray*}
\nu_i(x):=\frac{1}{\left(x-k_1\right)^i}+\frac{1}{\left(x-k_2\right)^i},~~~~
\nu_3(x):=\frac{2}{\left(x-k_1\right)^3}+\frac{2}{\left(x-k_2\right)^3}
\end{eqnarray*}
for $i=1,2$.
It is not hard to check that $x^*\in(k_1,k_2)$ satisfies $\ddot\psi(x^*,{\boldsymbol k})=0$ if and only if
\begin{eqnarray}
\nu_3(x^*)+\nu_2^2(x^*)+\left(\nu_3(x^*)-\nu_2^2(x^*)\right)e^{\nu_1(x^*)}=0.
\label{xstar}
\end{eqnarray}
Letting $\delta_1=k_2-k_1$ and $x^*=k_1+\theta^*\delta_1$ for $\theta^*\in(0,1)$, from \eqref{xstar} we get that
$(\delta_1,\theta^*)$ satisfies
$$
\frac{2\theta^*-1}{\theta^*(\theta^*-1)}-\delta_1\ln \frac{\mu_2(\theta^*)+\delta_1\mu_1(\theta^*)}{\mu_2(\theta^*)-\delta_1\mu_1(\theta^*)}=0.
$$
By the Implicit Function Theorem, $\theta^*=1/2+O(\delta_1)$. On the other hand, the above equality holds for $\theta^*=1/2$ and
any $\delta_1$. Thus, $\dot\psi(x,{\boldsymbol k})$ has a unique extremum value point $x^*=(k_1+k_2)/2$.
Then, associated with $\dot\psi(k_{1,2},{\boldsymbol k})=0$,
$$
|\dot\psi(x,{\boldsymbol k})|
\le |\dot\psi(x^*,{\boldsymbol k})|= |k_{2d+2}|\nu_2(x^*) = 8|k_{2d+2}|(k_2-k_1)^{-2}
$$
for $x\in [k_1,k_2]$.
By \eqref{K}, $\dot\psi\left(x,{\boldsymbol k}\right)\to 0$ uniformly for $x\in [k_1,k_2]$ as ${\boldsymbol k}\to {\boldsymbol 0}$.
Similarly, this convergence is also uniform for $x\in [k_i,k_{i+1}]$ ($i=2,...,2d$).
Thus, $\dot\psi\left(x,{\boldsymbol k}\right)\rightrightarrows0$ as ${\boldsymbol k}\to {\boldsymbol 0}$.

Similarly, for $x\in [k_1,k_2]$ there exists $M_2>0$ such that
$
|\ddot\psi(x,{\boldsymbol k})| \le M_2 |k_{2d+2}|(k_2-k_1)^{-4}=O(k_2-k_1).
$ By \eqref{K}, $\ddot\psi\left(x,{\boldsymbol k}\right)\to 0$ uniformly for $x\in [k_1,k_2]$ as ${\boldsymbol k}\to {\boldsymbol 0}$.
Further, it can be proved similarly that this uniform convergence holds
for $x\in[k_i,k_{i+1}]$ ($i=2,...,2d$) and the proof is finished.
\end{proof}

\begin{prop}
If $\psi^{\pm}(x,{\boldsymbol k}^{\pm})$ are of form $\psi(x,{\boldsymbol k})$ defined in \eqref{psidef} and
either ${\boldsymbol k}^{\pm}\in \mathbb{R}^{3d+1}\setminus {\cal K}$ or ${\boldsymbol k}^{\pm}\in {\cal K}$ satisfying
\eqref{K},
then for any $\epsilon>0$ there exists $\delta>0$ such that
\begin{equation*}
\rho\left(Z,\widetilde Z\right)<\epsilon,~~~~\forall |{\boldsymbol \lambda}^{\pm}|, |{\boldsymbol k}^{\pm}|<\delta,
\end{equation*}
where $Z(x,y),\widetilde Z\left(x,y\right)$ are piecewise-smooth vector fields of \eqref{pws3}, \eqref{pws4} respectively.
\label{prop1}
\end{prop}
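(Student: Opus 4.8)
The plan is to reduce $\rho(Z,\widetilde Z)$ to a finite sum of $C^1$-distances and show each summand tends to $0$ as $({\boldsymbol\lambda}^\pm,{\boldsymbol k}^\pm)\to{\boldsymbol 0}$. First I would write the upper and lower vector fields of \eqref{pws4} as $\widetilde f^\pm(x,y):=f^\pm(x,y+\psi^\pm)$ and $\widetilde g^\pm(x,y):=\phi^\pm(x,y+\psi^\pm)\prod_{i=1}^{m^\pm}(x-\lambda^\pm_i)-f^\pm(x,y+\psi^\pm)\dot\psi^\pm$, so that by \eqref{dist} $\rho(Z,\widetilde Z)=d(f^+,\widetilde f^+)+d(\phi^+x^{m^+},\widetilde g^+)+d(f^-,\widetilde f^-)+d(\phi^-x^{m^-},\widetilde g^-)$ and it suffices to make each of these four $C^1$-distances small. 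Since ${\cal U}$ is bounded, $\bar{\cal U}$ is compact; and since $f^\pm,\phi^\pm\in C^\infty(\mathbb R^2,\mathbb R)$, I would fix a compact neighbourhood $K$ of $\bar{\cal U}$ on which $f^\pm,\phi^\pm$ together with all their first and second order partial derivatives are bounded by some $M$ independent of the parameters. Once $|{\boldsymbol k}^\pm|$ is small enough that $\|\psi^\pm\|_\infty<1$ on $\bar{\cal U}$ (possible by Proposition~\ref{proppsi}), every translated point $(x,y+\psi^\pm)$ with $(x,y)\in\bar{\cal U}$ stays in $K$, so these bounds apply throughout.

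The estimates then rest on three ingredients. (i) Proposition~\ref{proppsi}, whose hypotheses are exactly the ones assumed here (including the alternative ${\boldsymbol k}^\pm\in\mathbb R^{3d+1}\setminus{\cal K}$), gives $\psi^\pm\rightrightarrows0$, $\dot\psi^\pm\rightrightarrows0$ and $\ddot\psi^\pm\rightrightarrows0$ on $\bar{\cal U}$ as ${\boldsymbol k}^\pm\to{\boldsymbol 0}$. (ii) $\prod_{i=1}^{m^\pm}(x-\lambda^\pm_i)$ is a polynomial in $x$ whose coefficients are polynomials in ${\boldsymbol\lambda}^\pm$ taking, at ${\boldsymbol\lambda}^\pm={\boldsymbol 0}$, the values producing $x^{m^\pm}$; hence on the bounded set $\bar{\cal U}$ it converges to $x^{m^\pm}$ in $C^1$ as ${\boldsymbol\lambda}^\pm\to{\boldsymbol 0}$. (iii) The mean value theorem in the $y$-slot gives $|F(x,y+\psi^\pm)-F(x,y)|\le M\|\psi^\pm\|_\infty$ on $\bar{\cal U}$ for $F$ equal to $f^\pm,\phi^\pm$ or any of their first partials. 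Combining (i) and (iii), $d(f^\pm,\widetilde f^\pm)\to0$: the value and its $\partial_y$-derivative are $O(\|\psi^\pm\|_\infty)$, while $\partial_x\widetilde f^\pm=\partial_xf^\pm(x,y+\psi^\pm)+\partial_yf^\pm(x,y+\psi^\pm)\dot\psi^\pm$ differs from $\partial_xf^\pm$ by $O(\|\psi^\pm\|_\infty+\|\dot\psi^\pm\|_\infty)$.

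Next I would treat $d(\phi^\pm x^{m^\pm},\widetilde g^\pm)$ the same way, where one new term appears. For the value and $\partial_y$-derivative one splits $\widetilde g^\pm-\phi^\pm x^{m^\pm}$ into $[\phi^\pm(x,y+\psi^\pm)-\phi^\pm(x,y)]\prod(x-\lambda^\pm_i)$, $\phi^\pm(x,y)[\prod(x-\lambda^\pm_i)-x^{m^\pm}]$ and $-f^\pm(x,y+\psi^\pm)\dot\psi^\pm$, each of which is controlled by (i)--(iii) and the boundedness of $\prod(x-\lambda^\pm_i)$ and $x^{m^\pm}$ on $\bar{\cal U}$, giving a bound $O(\|\psi^\pm\|_\infty+|{\boldsymbol\lambda}^\pm|+\|\dot\psi^\pm\|_\infty)$. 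Differentiating in $x$,
\begin{align*}
\partial_x\widetilde g^\pm=\;&\partial_x\bigl[\phi^\pm(x,y+\psi^\pm)\bigr]\,{\textstyle\prod_{i=1}^{m^\pm}(x-\lambda^\pm_i)}+\phi^\pm(x,y+\psi^\pm)\,\tfrac{d}{dx}{\textstyle\prod_{i=1}^{m^\pm}(x-\lambda^\pm_i)}\\
&-\partial_x\bigl[f^\pm(x,y+\psi^\pm)\bigr]\dot\psi^\pm-f^\pm(x,y+\psi^\pm)\ddot\psi^\pm,
\end{align*}
and the only new feature is the last term $f^\pm(x,y+\psi^\pm)\ddot\psi^\pm$, which is bounded by $M\|\ddot\psi^\pm\|_\infty\to0$ precisely by the second-derivative clause of Proposition~\ref{proppsi}; the remaining three terms are handled by (i), (ii) and the estimate for $\partial_x\widetilde f^\pm$ already obtained. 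Altogether $\rho(Z,\widetilde Z)$ is a sum of four quantities, each of order $O(\|\psi^\pm\|_\infty+\|\dot\psi^\pm\|_\infty+\|\ddot\psi^\pm\|_\infty+|{\boldsymbol\lambda}^\pm|)$, which tends to $0$ as $({\boldsymbol\lambda}^\pm,{\boldsymbol k}^\pm)\to{\boldsymbol 0}$; choosing $\delta$ accordingly completes the proof.

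I expect the main obstacle to be not any individual estimate but the bookkeeping around $\psi^\pm$: the appearance of $\ddot\psi^\pm$ in $\partial_x\widetilde g^\pm$ is exactly what forces Proposition~\ref{proppsi} to control two derivatives, and one must ensure that the translated arguments $(x,y+\psi^\pm)$ stay in a fixed compact set so that the bounds $M$ on $f^\pm,\phi^\pm$ and their derivatives are genuinely uniform in the parameters; both points are dealt with above as soon as $\|\psi^\pm\|_\infty<1$.
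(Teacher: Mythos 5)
Your proposal is correct and follows essentially the same route as the paper: decompose $\rho(Z,\widetilde Z)$ into the four $C^1$-distances, bound the shifted-argument differences by the mean value theorem together with uniform bounds for $f^\pm,\phi^\pm$ and their derivatives on a compact set, and invoke Proposition~\ref{proppsi} for the uniform smallness of $\psi^\pm,\dot\psi^\pm,\ddot\psi^\pm$. The only difference is one of detail: the paper writes out only the first-component estimate $d_1$ and dismisses the rest as similar, whereas you explicitly treat the second component, where the polynomial factor $\prod_i(x-\lambda_i^\pm)$ and the $\ddot\psi^\pm$ term actually appear.
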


\begin{proof}
By the definition of $\rho$ in \eqref{dist}, we obtain
$\rho\left(Z,\widetilde Z\right)=d_1+d_2+d_3+d_4$,
where
\begin{equation*}
\begin{aligned}
&d_1=\max_{(x,y)\in \bar{\cal U}}\left\{\left|Z^+_1-\widetilde Z^+_1\right|+\left|Z^+_{1x}-\widetilde Z^+_{1x}\right|+\left|Z^+_{1y}-\widetilde Z^+_{1y}\right|\right\},\\
&d_2=\max_{(x,y)\in \bar{\cal U}}\left\{\left|Z^+_2-\widetilde Z^+_2\right|+\left|Z^+_{2x}-\widetilde Z^+_{2x}\right|+\left|Z^+_{2y}-\widetilde Z^+_{2y}\right|\right\},\\
&d_3=\max_{(x,y)\in \bar{\cal U}}\left\{\left|Z^-_1-\widetilde Z^-_1\right|+\left|Z^-_{1x}-\widetilde Z^-_{1x}\right|+\left|Z^-_{1y}-\widetilde Z^-_{1y}\right|\right\},\\
&d_4=\max_{(x,y)\in \bar{\cal U}}\left\{\left|Z^-_2-\widetilde Z^-_2\right|+\left|Z^-_{2x}-\widetilde Z^-_{2x}\right|+\left|Z^-_{2y}-\widetilde Z^-_{2y}\right|\right\}.
\end{aligned}
\end{equation*}
Here
$\left(Z^\pm_1(x,y),Z^\pm_2(x,y)\right)^\top$,  $\left(\widetilde Z^\pm_1(x,y),\widetilde Z^\pm_2(x,y)\right)^\top$
are the upper and lower vector fields of systems~\eqref{pws3} and \eqref{pws4}.

For $(x,y)\in {\cal U}$, by Mean Value Theorem we get
\begin{equation}
\left|Z_1^+(x,y)-\widetilde Z^+_1(x,y)\right|
 = \left|f^+(x,y+\psi^+)-f^+(x,y)\right|
 = \left|\psi^+\right|\cdot\left|f^+_y(x,y+\theta_1\psi^+)\right|,
 \label{f+d1-1}
\end{equation}
where $\theta_1\in(0,1)$. Similarly,
\begin{equation}
\begin{aligned}
\left|Z^+_{1x}(x,y)-\widetilde Z^+_{1x}(x,y)\right|
& = \left|f^+_x\left(x,y+\psi^+\right)+f^+_y\left(x,y+\psi^+\right)\dot\psi^+-f^+_x(x,y)\right|\\
& = \left|\psi^+f^+_{xy}\left(x,y+\theta_2\psi^+\right)+f^+_y\left(x,y+\psi^+\right)\dot\psi^+\right|\\
& \le \left|\psi^+\right|\cdot\left|f^+_{xy}\left(x,y+\theta_2\psi^+\right)\right|+\left|\dot\psi^+\right|\cdot\left|f^+_y\left(x,y+\psi^+\right)\right|,
\\
\left|Z^+_{1y}(x,y)-\widetilde Z^+_{1y}(x,y)\right|
& = \left|f^+_y\left(x,y+\psi^+\right)-f^+_y(x,y)\right|\\
& = \left|\psi^+f^+_{yy}\left(x,y+\theta_3\psi^+\right)\right|\\
& = \left|\psi^+\right|\cdot\left|f^+_{yy}\left(x,y+\theta_3\psi^+\right)\right|,
\label{f+d1-2}
\end{aligned}
\end{equation}
where $\theta_2,~\theta_3\in(0,1)$. By the $C^\infty$ smoothness of $f^+$ on $\mathbb{R}^2$,
we have the boundedness of $f^+_y(x,y+\psi^+)$, $f^+_y(x,y+\theta_1\psi^+)$, $f^+_{xy}\left(x,y+\theta_2\psi^+\right)$
and $f^+_{yy}\left(x,y+\theta_3\psi^+\right)$ on $\bar {\cal U}$.
Associated with Proposition~\ref{proppsi}, \eqref{f+d1-1} and \eqref{f+d1-2}, for any $\epsilon>0$
there exists $\delta_1(\epsilon)>0$ such that
$d_1<\epsilon/4$ for all $|{\boldsymbol \lambda}^\pm|,|{\boldsymbol k}^\pm|<\delta_1(\epsilon)$.

Similarly, for any $\epsilon>0$
there exists $\delta_i(\epsilon)>0$ such that
$d_i<\epsilon/4$ for all $|{\boldsymbol \lambda}^\pm|,|{\boldsymbol k}^\pm|<\delta_i(\epsilon)$, $i=2,3,4$.
Thus, $\rho\left(Z,\widetilde Z\right)<\epsilon$ for all
$|{\boldsymbol \lambda}^\pm|,|{\boldsymbol k}^\pm|<\min\{\delta_i(\epsilon):~i=1,...,4\}$.
\end{proof}

Proposition~\ref{prop1} shows that $\widetilde Z\left(x,y\right)$ is at least continuous with respect to ${\boldsymbol k}^\pm$ at ${\boldsymbol k}^\pm={\boldsymbol 0}$, which makes sure that all analysis is strictly under the framework of bifurcation.
On the other hand, in section 1 we introduce functional parameters for perturbations of given PWS systems,
which are used to determine some dynamics of perturbation systems.
In perturbation system~\eqref{pws4}, the function of ${\boldsymbol \lambda}^\pm$ is to
desingularize tangent point $O$, i.e., to decrease the multiplicity $(m^+,m^-)$ of $O$.
Clearly, in perturbation
\begin{equation}
\begin{aligned}
        \left( \begin{array}{c}
        \dot{x}\\
        \dot{y}\\
    \end{array} \right) =\left\{
    \begin{aligned}
&       \left( \begin{array}{c}
            f^+(x,y)\\
            \phi^+(x,y)\prod\limits_{i=1}^{m^+}(x-\lambda^+_i)
        \end{array} \right)   &&\mathrm{if}~(x,y)\in\Sigma^+,\\
&       \left( \begin{array}{c}
            f^-(x,y)\\
            \phi^-(x,y)\prod \limits_{i=1}^{m^-}(x-\lambda^-_i)
        \end{array} \right)   && \mathrm{if}~(x,y)\in\Sigma^-,
    \end{aligned} \right.
\end{aligned}
    \label{tpws}
\end{equation}
tangent point $O$ of multiplicity $(m^+,m^-)$ breaks into several tangent points of lower multiplicities.
\eqref{tpws} can be looked as a transition system from system~\eqref{pws3} to system~\eqref{pws4}.

In the following we show the function of $\psi^\pm(x,{\boldsymbol k}^\pm)$ in perturbation system~\eqref{pws4}.

\begin{prop}
Assume that $\psi^\pm(x,{\boldsymbol k}^\pm)$ are of form $\psi(x,{\boldsymbol k})$ defined in \eqref{psidef} and ${\boldsymbol k}^\pm\in\mathbb{R}^{3d+1}\setminus{\cal K}$ or ${\boldsymbol k}^\pm\in{\cal K}$ satisfying~\eqref{K}.
Then for $(x_0^\pm,y_0^\pm)\in\Sigma^\pm$ there exist $\delta^\pm>0$ such that for $|{\boldsymbol k}^\pm|<\delta^\pm$,
\begin{equation}
\left(\!
\begin{aligned}
&\widetilde\gamma^\pm_1\left(t,x_0^\pm,y_0^\pm\!-\!\psi^\pm\left(x_0^\pm,{\boldsymbol k}^\pm\right)\right)\\
&\widetilde\gamma^\pm_2\left(t,x_0^\pm,y_0^\pm\!-\!\psi^\pm\left(x_0^\pm,{\boldsymbol k}^\pm\right)\right)
\end{aligned}
\!\right)=\!
\left(\!
\begin{aligned}
&\widehat\gamma^\pm_1\left(t,x_0^\pm,y_0^\pm\right)\\
&\widehat\gamma^\pm_2\left(t,x_0^\pm,y_0^\pm\right)
\end{aligned}
\!\right)
\!-
\left(\!
\begin{aligned}
&0\\
&\psi^\pm\left(\widehat \gamma^\pm_1\left(t,x_0^\pm,y_0^\pm\right),{\boldsymbol k}^\pm\right)
\end{aligned}
\!\right)
\label{defor}
\end{equation}
over some intervals $I^\pm$, where
\begin{equation*}
\begin{aligned}
&\widetilde\gamma^\pm\left(t,x^\pm_0,y^\pm_0\right):=\left(\widetilde\gamma^\pm_1\left(t,x_0^\pm,y_0^\pm\right), \widetilde\gamma^\pm_2\left(t,x_0^\pm,y_0^\pm\right)\right)^\top,\\
&\widehat\gamma^\pm\left(t,x^\pm_0,y^\pm_0\right):=\left(\widehat\gamma^\pm_1\left(t,x_0^\pm,y_0^\pm\right), \widehat\gamma^\pm_2\left(t,x_0^\pm,y_0^\pm\right)\right)^\top
\end{aligned}
\end{equation*}
denote solutions of the upper and lower subsystems of \eqref{pws4}, \eqref{tpws} with initial values $(x_0^\pm,y_0^\pm)$.
\label{prop2}
\end{prop}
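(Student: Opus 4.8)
The plan is to realize the fibre-preserving map
$$
\Phi^{\pm}:~(x,y)\longmapsto\bigl(x,\,y-\psi^{\pm}(x,{\boldsymbol k}^{\pm})\bigr)
$$
as an explicit $C^{\infty}$ conjugacy carrying the $\pm$-subsystem of \eqref{tpws} to the $\pm$-subsystem of \eqref{pws4}. Since $\psi(x,{\boldsymbol k})$ is $C^{\infty}$ in $x$ (as noted after \eqref{psidef}), for each fixed ${\boldsymbol k}^{\pm}$ the map $\Phi^{\pm}$ is a $C^{\infty}$ diffeomorphism of $\mathbb{R}^{2}$, with inverse $(x,y)\mapsto(x,y+\psi^{\pm}(x,{\boldsymbol k}^{\pm}))$ and Jacobian $\left(\begin{smallmatrix}1&0\\-\dot\psi^{\pm}&1\end{smallmatrix}\right)$; note that the $\dot\psi^{\pm}$-term in the second component of the right-hand side of \eqref{pws4} is precisely what this change of the $y$-coordinate produces.

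The key step is the one-line check that $\Phi^{\pm}$ pushes the vector field of the $\pm$-subsystem of \eqref{tpws} forward onto that of \eqref{pws4}. Writing the $\pm$-subsystem of \eqref{tpws} as $\dot z=X^{\pm}(z)$ with $X^{\pm}(x,y)=\bigl(f^{\pm}(x,y),\,\phi^{\pm}(x,y)\prod_{i}(x-\lambda^{\pm}_{i})\bigr)^{\top}$, one computes at a point $(x,y)$
$$
D\Phi^{\pm}\bigl((\Phi^{\pm})^{-1}(x,y)\bigr)\,X^{\pm}\bigl((\Phi^{\pm})^{-1}(x,y)\bigr)
=\begin{pmatrix}1&0\\-\dot\psi^{\pm}&1\end{pmatrix}\begin{pmatrix}f^{\pm}(x,y+\psi^{\pm})\\ \phi^{\pm}(x,y+\psi^{\pm})\prod_{i}(x-\lambda^{\pm}_{i})\end{pmatrix},
$$
whose two entries are exactly the two components of the $\pm$-subsystem of \eqref{pws4}. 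Consequently $\Phi^{\pm}$ maps orbits of the $\pm$-subsystem of \eqref{tpws} to orbits of the $\pm$-subsystem of \eqref{pws4} preserving the time parameter: if $\widehat\gamma^{\pm}(t,x_0^{\pm},y_0^{\pm})$ solves the first with that initial value, then $t\mapsto\Phi^{\pm}\bigl(\widehat\gamma^{\pm}(t,x_0^{\pm},y_0^{\pm})\bigr)$ solves the second with initial value $\Phi^{\pm}(x_0^{\pm},y_0^{\pm})=\bigl(x_0^{\pm},\,y_0^{\pm}-\psi^{\pm}(x_0^{\pm},{\boldsymbol k}^{\pm})\bigr)$, and by uniqueness of solutions of the (smooth) subsystems it coincides with $\widetilde\gamma^{\pm}\bigl(t,x_0^{\pm},y_0^{\pm}-\psi^{\pm}(x_0^{\pm},{\boldsymbol k}^{\pm})\bigr)$. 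Reading off the two components, and using $\widehat\gamma^{\pm}_{1}=\widetilde\gamma^{\pm}_{1}$ for the first, yields exactly \eqref{defor}.

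It then remains only to fix the domains. Since $(x_0^{\pm},y_0^{\pm})\in\Sigma^{\pm}$ lies in an open half-plane and $\psi^{\pm}(x_0^{\pm},{\boldsymbol k}^{\pm})\to 0$ as ${\boldsymbol k}^{\pm}\to{\boldsymbol 0}$ by Proposition~\ref{proppsi}, one picks $\delta^{\pm}>0$ so that $|{\boldsymbol k}^{\pm}|<\delta^{\pm}$ forces $|\psi^{\pm}(x_0^{\pm},{\boldsymbol k}^{\pm})|<|y_0^{\pm}|$, hence $\Phi^{\pm}(x_0^{\pm},y_0^{\pm})\in\Sigma^{\pm}$; by continuous dependence on initial data both solutions above are defined and remain in $\overline{\Sigma^{\pm}}$ on a common interval $I^{\pm}\ni 0$, on which they genuinely are solutions of the respective $\pm$-subsystems, and \eqref{defor} holds on $I^{\pm}$. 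I do not expect a real obstacle: the conjugacy identity itself is unconditional, and the only mild care needed is the $C^{\infty}$ regularity of $\Phi^{\pm}$ (already in hand from the remark after \eqref{psidef}) together with confining everything to a short time interval near a point of the open half-plane so that the switching behaviour of \eqref{pws1} plays no role.
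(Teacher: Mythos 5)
Your proof is correct and is essentially the paper's argument: both verify that the shear $(x,y)\mapsto\bigl(x,\,y-\psi^{\pm}(x,{\boldsymbol k}^{\pm})\bigr)$ carries solutions of \eqref{tpws} to solutions of \eqref{pws4} while fixing the first coordinate, which is exactly \eqref{defor}. The only cosmetic difference is that you establish the conjugacy by pushing the vector field forward and invoking uniqueness of solutions, whereas the paper substitutes the identities $\widehat Z^{\pm}_1(x,y)=\widetilde Z^{\pm}_1(x,y-\psi^{\pm})$ and $\widehat Z^{\pm}_2(x,y)=\widetilde Z^{\pm}_2(x,y-\psi^{\pm})+\widetilde Z^{\pm}_1(x,y-\psi^{\pm})\dot\psi^{\pm}$ into the equivalent integral equation --- the same key computation written in integral rather than differential form.
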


\begin{proof}
Let $\widehat Z(x,y)$ be the piecewise-smooth vector field of system~\eqref{tpws} and
denote its upper vector field and lower one by
\begin{equation*}
\widehat Z^+(x,y):=\left(\widehat Z^+_1(x,y),\widehat Z^+_2(x,y)\right)^\top,~~~\widehat Z^-(x,y):=\left(\widehat Z^-_1(x,y),\widehat Z^-_2(x,y)\right)^\top.
\end{equation*}
For convenience, we write $\psi^+(x)$ and $\widehat\gamma^+(t)$ to replace $\psi^+\left(x,{\boldsymbol k}^+\right)$ and $\widehat\gamma^+\left(t,x^+_0,y^+_0\right)$ respectively. Clearly,
\begin{equation*}
\widehat\gamma^+_1(t)=x^+_0+\int_{0}^{t}\widehat Z^+_1\left(\widehat\gamma^+_1(s),\widehat\gamma^+_2(s)\right)ds,~~~\widehat\gamma^+_2(t)=y^+_0+\int_{0}^{t}\widehat Z^+_2\left(\widehat\gamma^+_1(s),\widehat\gamma^+_2(s)\right)ds.
\end{equation*}
By $\widehat Z^+_1\left(x,y\right) = \widetilde Z^+_1\left(x,y-\psi^+\left(x\right)\right)$, we get
\begin{equation}
\widehat\gamma^+_1(t)=x^+_0+\int_{0}^{t}\widetilde Z^+_1\left(\widehat\gamma^+_1(s),\widehat\gamma^+_2(s)-\psi^+\left(\widehat\gamma^+_1(s)\right)\right)ds.
\label{eq-in-1}
\end{equation}
Similarly, by $\widehat Z^+_2\left(x,y\right) = \widetilde Z^+_2\left(x,y-\psi^+\left(x\right)\right)+\widetilde Z^+_1\left(x,y-\psi^+\left(x\right)\right)\dot\psi\left(x\right)$,
\begin{equation}
\begin{aligned}
\widehat\gamma^+_2(t)
& = y^+_0+\int_{0}^{t}\widetilde Z^+_2\left(\widehat\gamma^+_1(s),\widehat\gamma^+_2(s)-\psi^+\left(\widehat\gamma^+_1(s)\right)\right)ds\\
& ~~~+\int_{0}^{t}\widetilde Z^+_1\left(\widehat\gamma^+_1(s),\widehat\gamma^+_2(s)-\psi^+\left(\widehat\gamma^+_1(s)\right)\right)\dot\psi^+\left(\widehat\gamma^+_1(s)\right)ds\\
& = y^+_0+\int_{0}^{t}\widetilde Z^+_2\left(\widehat\gamma^+_1(s),\widehat\gamma^+_2(s)-\psi^+\left(\widehat\gamma^+_1(s)\right)\right)ds+\int_{0}^{t}\dot\psi^+\left(\widehat\gamma^+_1(s)\right)d\widehat\gamma^+_1(s)\\
& = y^+_0-\psi^+(x^+_0)+\int_{0}^{t}\widetilde Z^+_2\left(\widehat\gamma^+_1(s),\widehat\gamma^+_2(s)-\psi^+\left(\widehat\gamma^+_1(s)\right)\right)ds+\psi^+\left(\widehat\gamma^+_1(t)\right).
\end{aligned}
\label{eq-in-2}
\end{equation}
By (\ref{eq-in-1}) and (\ref{eq-in-2}), we obtain that
$(\widehat\gamma^+_1\left(t,x^+_0,y^+_0\right), \widehat\gamma^+_2\left(t,x^+_0,y^+_0\right)-\psi^+\left(\widehat\gamma^+_1\left(t,x^+_0,y^+_0\right)\right))^\top$
satisfies the equivalent integral equation of \eqref{pws4}.
Thus, it is the solution of \eqref{pws4} with initial value $\left(x^+_0,y^+_0-\psi^+(x^+_0)\right)^\top$.
Therefore, \eqref{defor} with `+' is proved.
\eqref{defor} with `-' can be proved similarly for the lower subsystems of \eqref{pws4}, \eqref{tpws} and we omit the statements.
\end{proof}

Associated with the property of $\psi^\pm(x,{\boldsymbol k}^\pm)$,
the relation between orbits $\widehat\gamma^\pm(t,x,y)$ and orbits $\widetilde\gamma^\pm(t,x,y)$ given in Proposition~\ref{prop2}, which greatly help us prove our main results in next two sections.

\section{Proofs of Theorems~\ref{thm1} and \ref{thm2}}
\setcounter{equation}{0}
\setcounter{lm}{0}
\setcounter{thm}{0}
\setcounter{rmk}{0}
\setcounter{df}{0}
\setcounter{cor}{0}

In this section we prove the results on bifurcating tangent points and bifurcating tangent orbits
of tangent point $O$ of multiplicity $(m^+, m^-)$, i.e., Theorems~\ref{thm1} and \ref{thm2}.

\begin{proof}[Proof of Theorem~\ref{thm1}] For system~\eqref{pws2}, we write its general unfolding as
\begin{equation}
\begin{aligned}
		\left( \begin{array}{c}
		\dot{x}\\
		\dot{y}\\
	\end{array} \right) =\left\{
    \begin{aligned}
&		\left( \begin{array}{c}
			f^+(x,y,{\boldsymbol \alpha}^+)\\
			g^+(x,y,{\boldsymbol \alpha}^+)
		\end{array} \right) ~~~~&&\mathrm{if}~(x,y)\in\Sigma^+,\\
&		\left( \begin{array}{c}
			f^-(x,y,{\boldsymbol \alpha}^-)\\
			g^-(x,y,{\boldsymbol \alpha}^-)
		\end{array} \right)   && \mathrm{if}~(x,y)\in\Sigma^-,
	\end{aligned} \right.
\end{aligned}
\label{pws2-unfold}
\end{equation}
where ${\boldsymbol \alpha}^{\pm}\in\mathbb{R}^{n^{\pm}}$ for positive integers $n^{\pm}$. Assume that $f^\pm,~g^\pm$ are $C^\infty$ with respect to $(x,y,{\boldsymbol \alpha}^\pm)$ and system~\eqref{pws2-unfold} with ${\boldsymbol \alpha}^{\pm}={\boldsymbol 0}$ corresponds to system~\eqref{pws2}.

When $m^+=0$, the origin $O$ is a regular point of the upper subsystem of \eqref{pws2}, i.e., $g^+(0,0,{\boldsymbol 0})\ne0$. This implies that the upper subsystem of \eqref{pws2-unfold} has no tangent points in a small neighborhood of $O$.

When $m^+\ge1$, finding tangent points of the upper subsystem is equivalent to
finding zeros of $G^+\left(x,{\boldsymbol \alpha}^+\right)$, where
$G^+\left(x,{\boldsymbol \alpha}^+\right):=g^+(x,0,{\boldsymbol \alpha}^+)$. Clearly,
\begin{equation*}
G^+(x,{\boldsymbol 0})=\beta_{m^+}x^{m^+}+O\left(x^{{m^+}+1}\right), ~~\beta_{m^+}\ne 0.
\end{equation*}
By the Malgrange Preparation Theorem, there exists $\widetilde G^+(x,{\boldsymbol \alpha}^+)$ satisfying $\widetilde G^+(0,{\boldsymbol 0})=\beta_{m^+}$ and $\xi_i({\boldsymbol \alpha}^+)$ satisfying $\xi_i({\boldsymbol 0})=0$ ($i=0,...,m^+-1$) such that
\begin{equation*}
G^+\left(x,{\boldsymbol \alpha}^+\right) = \widetilde G^+\left(x,{\boldsymbol \alpha}^+\right)\left(\xi_0\left({\boldsymbol \alpha}^+\right)+\xi_1\left({\boldsymbol \alpha}^+\right)x+...+\xi_{m^+-1}\left({\boldsymbol \alpha}^+\right)x^{m^+-1}+x^{m^+}\right)
\end{equation*}
on a neighborhood of $\left(x,{\boldsymbol \alpha}^+\right)=(0,{\boldsymbol 0})$.
Thus, $G^+\left(x,{\boldsymbol \alpha}^+\right)$ has at most $m^+$ real roots in a neighborhood,
which means that the upper subsystem has at most
$m^+$ bifurcating tangent points.
Similarly, the lower subsystem has at most
$m^-$ bifurcating tangent points. Therefore, the number $\ell$ of bifurcating tangent points of
system~\eqref{pws2-unfold} is at most $m^++m^-$, i.e., $\ell\in[1,m^++m^-]$.
Then the $\ell$ bifurcating tangent points $(x^*_i,0)$ of multiplicity $(m^+_i, m^-_i)$ ($i=1,...,\ell$)
satisfy (\ref{maxi-sum})
because $x^*_i$ is a zero with multiplicity $m^+_i$ of a polynomial function of degree $m^+$
and a zero with multiplicity $m^-_i$ of a polynomial function of degree $m^-$ by the Malgrange Preparation Theorem.
Here $m^+_i+m^-_i\ge 1$ and one of them may be $0$.
Theorem~\ref{thm1}(a) is proved.

If $\ell=m^++m^-$, by Theorem~\ref{thm1}(a) the $m^++m^-$ bifurcating tangent points $(x^*_i,0)$ ($i=1,...,m^++m^-$) of system~\eqref{pws2-unfold} satisfy
$$\sum_{i=1}^{m^++m^-}\left(m^+_i+m^-_i\right)=m^-+m^+.$$
Associated with each $m^+_i+m^-_i\ge 1$, we get $(m^+_i,m^-_i)\in\left\{(0,1), (1,0)\right\}$ for all $i=1,...,m^++m^-$.
Moreover, $m^+$ elements of $\{x^*_1,...,x^*_{m^++m^-}\}$ are simple zeros of
a polynomial function of degree $m^+$ and the other $m^-$ elements are simple zeros of
a polynomial function of degree $m^-$, which correspond to
$m^+$ tangent points of the upper subsystem and $m^-$ tangent points of the lower one of \eqref{pws2-unfold} respectively.
Clearly, the $m^+$ (resp. $m^-$) simple zeros of that polynomial function of degree $m^+$ (resp. $m^-$) have  alternating monotonicity, which implies the alternating visibility of the $m^+$ (resp. $m^-$) tangent points
because the visibility depends on the sign of derivative of that polynomial function at zero.
Theorem~\ref{thm1}(b) is proved.

To prove Theorem~\ref{thm1}(c), we consider unfolding \eqref{pws2-unfold} of system~\eqref{pws2} in a special form
\begin{equation}
\begin{aligned}
        \left( \begin{array}{c}
        \dot{x}\\
        \dot{y}\\
    \end{array} \right) =\left\{
    \begin{aligned}
&       \left( \begin{array}{c}
            f^+(x,y)\\
            \phi^+(x,y)\prod_{i=1}^{m^+}(x-{\boldsymbol \alpha}^+_i)+\Upsilon^+(x,y)y
        \end{array} \right) ~~~~&&\mathrm{if}~(x,y)\in\Sigma^+,\\
&       \left( \begin{array}{c}
            f^-(x,y)\\
            \phi^-(x,y)\prod_{i=1}^{m^-}(x-{\boldsymbol \alpha}^-_i)+\Upsilon^-(x,y)y
        \end{array} \right)   && \mathrm{if}~(x,y)\in\Sigma^-,
    \end{aligned} \right.
    \end{aligned}
    \label{pws2-exunfold}
\end{equation}
where ${\boldsymbol \alpha}^{\pm}:=\left({\boldsymbol \alpha}^{\pm}_i\right)\in\mathbb{R}^{m^{\pm}}$.
For given $\ell\in [1,m^++m^-]$, one can easily choose appropriate ${\boldsymbol \alpha}^\pm$ such that
system~\eqref{pws2-exunfold} has exactly $\ell$ bifurcating tangent points and two ``=" in (\ref{maxi-sum}) hold.
For example, we can take ${\boldsymbol \alpha}^{\pm}$ such that
$$
\prod_{i=1}^{m^+}(x-{\boldsymbol \alpha}^+_i)=\prod_{i=1}^{\ell^+}(x-\widehat{\boldsymbol \alpha}^+_i)^{\widehat m^+_i},~~
\prod_{i=1}^{m^-}(x-{\boldsymbol \alpha}^-_i)=\prod_{i=1}^{\ell^-}(x-\widehat{\boldsymbol \alpha}^-_i)^{\widehat m^-_i},
$$
where $\ell^\pm\le m^\pm$, $\ell^++\ell^-=\ell$ and $\ell^+=0$ (resp. $\ell^-=0$) if $m^+=0$ (resp. $m^-=0$),
all $\widehat{\boldsymbol \alpha}^+_i, \widehat{\boldsymbol \alpha}^-_i$ are distinct from each other.
Then \eqref{pws2-exunfold} has exactly $\ell$ bifurcating tangent points
$(\widehat{\boldsymbol \alpha}^+_i, 0)$ of multiplicity $(\widehat m^+_i, 0)$ and
$(\widehat{\boldsymbol \alpha}^-_i, 0)$ of multiplicity $(0, \widehat m^-_i)$.
Two ``=" in (\ref{maxi-sum}) hold because
$\widehat m^+_1+...+\widehat m^+_{\ell^+}=m^+, \widehat m^-_1+...+\widehat m^-_{\ell^-}=m^-$.
Theorem~\ref{thm1}(c) is proved.
\end{proof}

\begin{proof}[Proof of Theorem~\ref{thm2}]
Since $f^+(0,0)\ne 0$, we assume that $f^+(0,0)>0$ without loss of generality.
In the following we prove this theorem for different cases of the set of $\ell$.

{\it Step 1. Reachability for each $\ell\in\left\{1\right\}$}.
This case corresponds to four subcases: $O$ of system~\eqref{pws3} is either invisible with $m^+=3$ or visible with $m^+=1$ or left with $m^+=2$ or right with $m^+=2$. Clearly, Theorem~\ref{thm2} holds because $\ell=1$.

{\it Step 2. Reachability for each $\ell\in\left\{1,2\right\}$}.
This case corresponds to four subcases:  $O$ of system~\eqref{pws3} is either invisible with $m^+=5$ or visible with $m^+=3$ or left with $m^+=4$ or right with $m^+=4$.
For the first subcase, we take $0<\lambda^+_1<\lambda^+_2<\lambda^+_4<\lambda^+_5$ and $\lambda^+_3\in [\lambda^+_2, \lambda^+_4]$
in transition system~\eqref{tpws}.
Since $O$ is invisible, we get that besides invisible bifurcating
tangent points $\left(\lambda^+_1,0\right), \left(\lambda^+_5,0\right)$
there are bifurcating tangent points $\left(\lambda^+_2,0\right), \left(\lambda^+_3,0\right)$ and $\left(\lambda^+_4,0\right)$.
Here $\left(\lambda^+_3,0\right)$ maybe coincide with $\left(\lambda^+_2,0\right)$ or $\left(\lambda^+_4,0\right)$.
Let function $y=\Theta(x)$ be the solution of the  Cauchy problem
\begin{equation*}
\frac{dy}{dx}=\frac{\widehat Z^+_2(x,y)}{\widehat Z^+_1(x,y)},~~~~
y\left(\lambda^+_2\right)=0.
\end{equation*}
Then, $\Theta\left(\lambda^+_4\right)<0$ when $\lambda^+_3=\lambda^+_2$ and, hence, $\Theta\left(\lambda^+_4\right)<0$ when $0<\lambda^+_3-\lambda^+_2\ll 1$ by
the continuity of $\Theta(x)$ with respect to $\lambda^+_3$.
Thus, orbit $\widehat\gamma^+\left(t,\lambda^+_2,0\right)$ (resp. $\widehat\gamma^+\left(t,\lambda^+_4,0\right)$) only passes through visible bifurcating tangent point $\left(\lambda^+_2,0\right)$ (resp. $\left(\lambda^+_4,0\right)$) as shown in Figure~\ref{Fig-thm2-1}(a).
Therefore, system~\eqref{pws4} with $\psi^+\left(x,{\boldsymbol 0}\right)\equiv 0$ satisfies the reachability for $\ell=1$.

\begin{figure}[htp]
\centering
\subfigure[two bifurcating tangent orbits]
 {
  \scalebox{0.45}[0.45]{
   \includegraphics{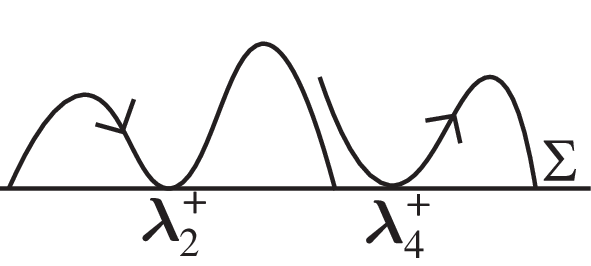}}}~~~~
\subfigure[one bifurcating tangent orbit]
 {
  \scalebox{0.45}[0.45]{
   \includegraphics{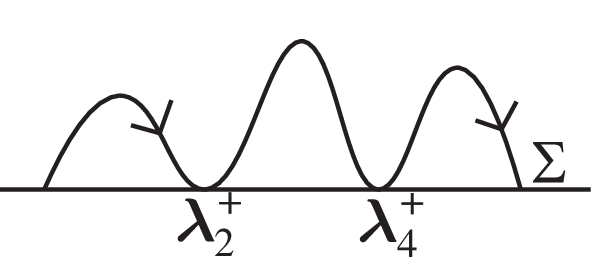}}}
\caption{bifurcating tangent orbit passing through $\left(\lambda^+_2,0\right)$}
\label{Fig-thm2-1}
\end{figure}
On the other hand, we similarly get that
$\Theta\left(\lambda^+_4\right)>0$ when $\lambda^+_3=\lambda^+_4$ and, hence, $\Theta\left(\lambda^+_4\right)>0$
when $0<\lambda^+_4-\lambda^+_3\ll 1$ by
the continuity of $\Theta(x)$ with respect to $\lambda^+_3$.
Thus, there exists $\lambda^+_3\in (\lambda^+_2,\lambda^+_4)$ such that $\Theta\left(\lambda^+_4\right)=0$ by the Intermediate Value Theorem.
This implies that orbit $\widehat\gamma^+\left(t,\lambda^+_2,0\right)$ passes through visible bifurcating tangent point $\left(\lambda^+_4,0\right)$
as shown in Figure~\ref{Fig-thm2-1}(b). Therefore, system~\eqref{pws4}
with $\psi^+\left(x,{\boldsymbol 0}\right)\equiv 0$ satisfies the reachability for $\ell=2$.
The reachability can be proved similarly for the other three subcases and we omit the statements here.

{\it Step 3. Reachability for each $\ell\in\{1,...,r\}$ with $r\ge3$.}
This case corresponds to four subcases: $O$ of \eqref{pws2} is either invisible with $m^+=2r+1$ or visible with $m^+=2r-1$ or left with $m^+=2r$ or right with $m^+=2r$. For the first subcase,
we take $\lambda^+_i=i\delta$ for small $\delta>0$ ($i=1,..,m^+$) in transition system~\eqref{tpws} and, hence,
$\left(\lambda^+_{2i},0\right)$ ($i=1,...,(m^+-1)/2$) are visible bifurcating tangent points. Consider functions $y=\Theta_n(x)$
($n=1,...,(m^+-1)/2$) satisfying the Cauchy problems respectively
\begin{equation*}
\frac{dy}{dx}=\frac{\widehat Z^+_2(x,y)}{\widehat Z^+_1(x,y)},~~~~
y\left(\lambda^+_2\right)=C_n,
\end{equation*}
where $C_n:=\delta^{m^+}(1+n\delta)$.
We obtain that for all $x\in\left(\lambda^+_2,\lambda^+_{m^+}\right)=(2\delta, m^+\delta)$
\begin{equation}
\begin{aligned}
\Theta_n(x)
& = \Theta_n\left(\lambda^+_2\right)+\int_{\lambda^+_2}^{x}\frac{\widehat Z^+_2(s,\Theta_n(s))}{\widehat Z^+_1(s,\Theta_n(s))}ds\\
& = \delta^{m^+}(1+n\delta)+\left(x-\lambda^+_2\right)\frac{\widehat Z^+_2(\sigma_n,\Theta_n(\sigma_n))}{\widehat Z^+_1(\sigma_n,\Theta_n(\sigma_n))}\\
& = \delta^{m^+}(1+n\delta)+\left(x-\lambda^+_2\right)\frac{\phi^+\left(\sigma_n,\Theta_n(\sigma_n)\right)}{\widehat Z^+_1\left(\sigma_n,\Theta_n(\sigma_n)\right)}\prod_{i=1}^{m^+}\left(\sigma_n-\lambda^+_i\right)
\\
& = \delta^{m^+}(1+n\delta)+\delta^{m^++1}(x^*-2)\frac{\phi^+\left(\sigma^*_n\delta,\Theta_n(\sigma^*_n\delta)\right)}{\widehat Z^+_1\left(\sigma^*_n\delta,\Theta_n(\sigma^*_n\delta)\right)}\prod_{i=1}^{m^+}(\sigma^*_n-i)\\
& = \delta^{m^+}+O\left(\delta^{m^++1}\right)\\
&>0
\end{aligned}
\label{Theta-Esti}
\end{equation}
for sufficiently small $\delta>0$, where $x^*=x/\delta\in\left(2,m^+\right)$, $\sigma_n\in\left(\lambda^+_2,x\right)$ and $\sigma^*_n=\sigma_n/\delta\in\left(2,x^*\right)$.
Thus, each orbit $\widehat\gamma^+\left(t,\lambda^+_2,C_n\right)$ ($n=1,...,(m^+-1)/2$)
horizontally intersects with lines $x=\lambda^+_m$  ($m=1,...,m^+$) in $\Sigma^+$
at some $t$ (denoted as $t_{n,m}$),
i.e., $Y_{n,m}:=\widehat\gamma^+_2\left(t_{n,m},\lambda^+_2, C_n \right)>0$.
Clearly, $t_{n,2}=0$ and $Y_{n,2}=C_n$ for all $n=1,...,(m^+-1)/2$.

For the reachability for $\ell=1$, function $\psi^+\left(x,{\boldsymbol k}^+\right)$ is defined by taking $d=(m^+-1)/2$ and
\begin{equation}
\left\{
\begin{aligned}
&k^+_i=\lambda^+_i,&&~{\rm~for~}i=1,...,2d+1,\\
&k^+_{n+2d+1}=Y_{n,2n},&&~{\rm~for~}n=1,...,d.
\end{aligned}
\right.
\label{kYrelation}
\end{equation}
Clearly, ${\boldsymbol k}^+\in{\cal K}$ and it satisfies~\eqref{K} because
\begin{equation*}
Y_{n,2n}=\widehat\gamma^+_2\left(t_{n,2n},\lambda^+_2,C_n\right)=\delta^{m^+}+O\left(\delta^{m^++1}\right).
\end{equation*}
Meanwhile, it is not hard to check that
\begin{equation*}
\widetilde Z^+_1\left(\lambda^+_i,0\right) = \widehat Z^+_1\left(\lambda^+_i,\psi^+\left(\lambda^+_i,{\boldsymbol k}^+\right)\right)>0
\end{equation*}
by continuity and
\begin{equation*}
\begin{aligned}
\widetilde Z^+_2\left(\lambda^+_i,0\right)
& = \widehat Z^+_2\left(\lambda^+_i,\psi^+\left(\lambda^+_i,{\boldsymbol k}^+\right)\right)-\widehat Z^+_1\left(\lambda^+_i,\psi^+\left(\lambda^+_i,{\boldsymbol k}^+\right)\right)\dot\psi^+\left(\lambda^+_i,{\boldsymbol k}^+\right)\\
& = \widehat Z^+_2\left(\lambda^+_i,\psi^+\left(\lambda^+_i,{\boldsymbol k}^+\right)\right)\\
& = 0
\end{aligned}
\end{equation*}
by $\dot\psi^+\left(\lambda^+_i,{\boldsymbol k}^+\right)=0$, where $i=1,...,m^+$. Further, by $\ddot\psi^+\left(\lambda^+_i,{\boldsymbol k}^+\right)=0$ we obtain
\begin{equation*}
{\rm sgn}\left(\widetilde Z^+_1\left(\lambda^+_i,0\right)\frac{\partial \widetilde Z^+_2}{\partial x}\left(\lambda^+_i,0\right)\right)={\rm sgn}\left(\widehat Z^+_1\left(\lambda^+_i,0\right)\frac{\partial \widehat Z^+_2}{\partial x}\left(\lambda^+_i,0\right)\right),
\end{equation*}
i.e., bifurcating tangent points of system~\eqref{pws4} are same with ones of transition system~\eqref{tpws} including locations and types.

For orbit $\widehat\gamma^+\left(t,\lambda^+_2,Y_{1,2}\right)$, there is a corresponding orbit $\widetilde\gamma^+\left(t,\lambda^+_2,Y_{1,2}-\psi^+\left(\lambda^+_2,{\boldsymbol k}^+\right)\right)$
by Proposition~\ref{prop2}.
Moreover, $Y_{1,2}-\psi^+\left(\lambda^+_2,{\boldsymbol k}^+\right)=0$ because of (\ref{psidef}) and (\ref{kYrelation}).
Then $\widetilde\gamma^+_1\left(t,\lambda^+_2,0\right) = \widehat\gamma^+_1\left(t,\lambda^+_2,Y_{1,2}\right)$. Meanwhile,
\begin{equation*}
\begin{aligned}
\widetilde\gamma^+_2\left(t,\lambda^+_2,0\right)
& = \widehat\gamma^+_2\left(t,\lambda^+_2,Y_{1,2}\right)-\psi^+\left(\widehat\gamma^+_1\left(t,\lambda^+_2,Y_{1,2}\right),{\boldsymbol k}^+\right)\\
& \ge \widehat\gamma^+_2\left(t,\lambda^+_2,Y_{1,2}\right)-\psi^+\left(\lambda^+_2,{\boldsymbol k}^+\right)\\
& = \widehat\gamma^+_2\left(t,\lambda^+_2,Y_{1,2}\right)-Y_{1,2}\\
& \ge 0.
\end{aligned}
\end{equation*}
for $t\in[t_{1,1},t_{1,3}]$ and $\widetilde\gamma^+_2\left(t,\lambda^+_2,0\right)=0$ if and only if $t=t_{1,2}$. On the other hand, since
\begin{equation*}
\begin{aligned}
\widetilde\gamma^+_2\left(t_{1,4},\lambda^+_2,0\right)
& = \widehat\gamma^+_2\left(t_{1,4},\lambda^+_2,Y_{1,2}\right)-\psi^+\left(\widehat\gamma^+_1\left(t_{1,4},\lambda^+_2,Y_{1,2}\right),{\boldsymbol k}^+\right)\\
& = \widehat\gamma^+_2\left(t_{1,4},\lambda^+_2,Y_{1,2}\right)-\psi^+\left(\lambda^+_4,{\boldsymbol k}^+\right)\\
& = \widehat\gamma^+_2\left(t_{1,4},\lambda^+_2,Y_{1,2}\right)-\widehat\gamma^+_2\left(t_{2,4},\lambda^+_2,Y_{2,2}\right)\\
& < 0,
\end{aligned}
\end{equation*}
orbit $\widetilde\gamma^+\left(t,\lambda^+_2,0\right)$ transversal intersects with $\Sigma$ at $(Q,0)$ at some $t_{1,3}<t<t_{1,4}$, i.e., $\lambda^+_3<Q<\lambda^+_4$. Thus, it only passes through one visible bifurcating tangent point $\left(\lambda^+_2,0\right)$.

It is proved similarly that orbits $\widetilde\gamma^+\left(t,\lambda^+_2,Y_{n,2}-Y_{1,2}\right)$ pass through one visible bifurcating tangent point $\left(\lambda^+_{2n},0\right)$ ($n=2,...,(m^+-1)/2$) respectively. Thus, for system~\eqref{pws4} there are $(m^+-1)/2$ bifurcating tangent
orbits in $\Sigma^+$ and each of them passes through one visible bifurcating tangent point, which proves the reachability for $\ell=1$.
An example for $m^+=7$ is shown in Figure~\ref{Fig-thm2-2}, where the dashed curve in Figure~\ref{Fig-thm2-2}(a)
denotes $\psi^+\left(x,{\boldsymbol k}^+\right)$.
\begin{figure}[htp]
\centering
\subfigure[transition system~\eqref{tpws}]
 {
  \scalebox{0.35}[0.35]{
   \includegraphics{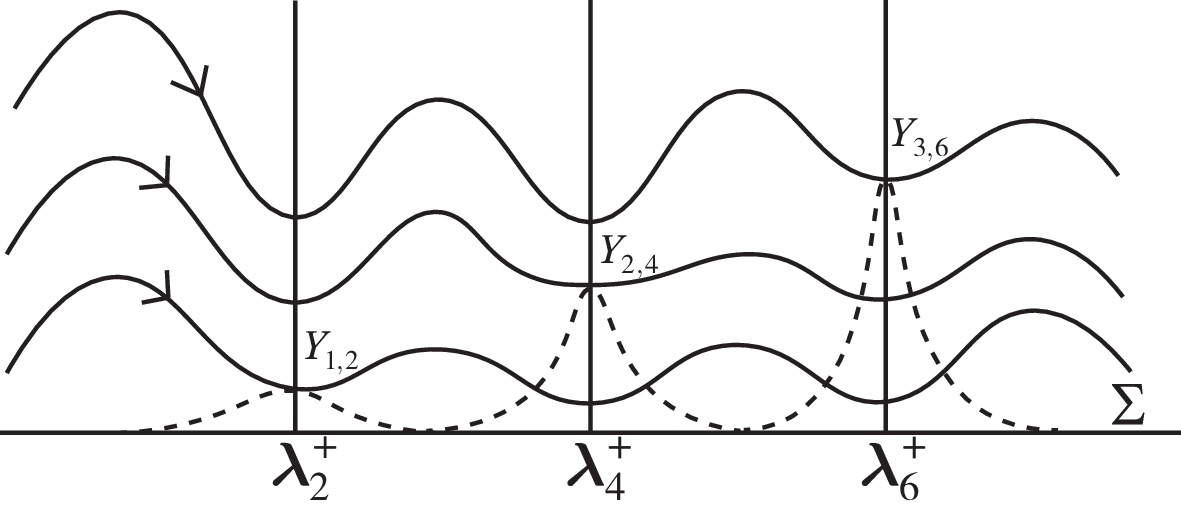}}}
\subfigure[system~\eqref{pws4}]
 {
  \scalebox{0.35}[0.35]{
   \includegraphics{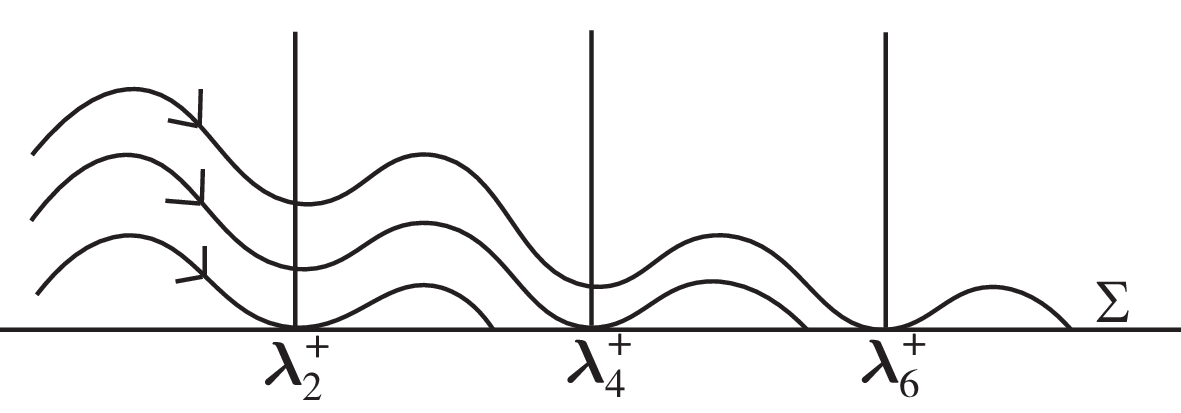}}}
\caption{the example for $m^+=7$}
\label{Fig-thm2-2}
\end{figure}

For the reachability for $\ell\ge2$, we take
$d=(m^+-1)/2$,
$k^+_i=\lambda^+_i$ ($i=1,...,2d+1$) and for $n=1,...,d$
\begin{equation*}
k^+_{n+2d+1}=
\left\{
\begin{aligned}
&Y_{j,2n},&&{\rm~for~}n=1,...,\lfloor d/\ell\rfloor\ell,\\
&2\delta^{m^+},&&{\rm~for~}n>\lfloor d/\ell\rfloor\ell,
\end{aligned}
\right.
\end{equation*}
where $j=\lfloor (n-1)/\ell\rfloor+1$.
By a similar way, we obtain that orbit $\widetilde\gamma^+\left(t,\lambda^+_2,Y_{j,2}-Y_{1,2}\right)$ ($j=1,...,\lfloor d/\ell\rfloor$)
passes through visible bifurcating tangent points $\left(\lambda^+_{2m},0\right)$ ($m=1+(j-1)\ell,...,j\ell$)
and transversal intersects with $\Sigma$ at $\left(Q_j,0\right)$ respectively and $\lambda^+_{2j\ell+1}<Q_j<\lambda^+_{2j\ell+2}$. Thus, the reachability for any $\ell\ge 2$ is proved.

For the other three subcases, analysis is similar and we omit the statements.
\end{proof}

\section{Proofs of Theorems~\ref{thm3}, \ref{thm4} and \ref{thm5}}
\setcounter{equation}{0}
\setcounter{lm}{0}
\setcounter{thm}{0}
\setcounter{rmk}{0}
\setcounter{df}{0}
\setcounter{cor}{0}

In this section we prove the results on bifurcating loops of $L^{cri}_*$ including $L_c,~L_s,~L^{cro},~L^{cri}$, i.e., Theorems~\ref{thm3}, \ref{thm4} and \ref{thm5}, where $L^{cri}_*$ is given in the beginning of section 3.2.
We begin with transition map for  system
\begin{equation}
        \left( \begin{array}{c}
        \dot{x}\\
        \dot{y}\\
    \end{array} \right)
    =
      \left( \begin{array}{c}
            f(x,y)\\
            g(x,y)
        \end{array} \right)=:{\cal Z}(x,y),
\label{trans-sys}
\end{equation}
where $(x,y)\in{\mathbb R}^2$ and $f, g$ are $C^{\infty}$. Let
\begin{equation*}
\gamma(t,x_0,y_0):=\left(\gamma_1(t,x_0,y_0),\gamma_2(t,x_0,y_0)\right)^\top
\end{equation*}
be the solution of system~\eqref{trans-sys} with initial value $(x_0,y_0)$. For regular point $(x_0,y_0)$ and a given $T>0$, denote $\gamma(T,x_0,y_0)$ by $(x_1,y_1)^\top$.
We take straight crossing segment $S_0$ (resp. $S_1$) at $(x_0,y_0)$ (resp. $(x_1,y_1)$) and
let $N_0:=(N_{01},N_{02})^\top$ (resp. $N_1:=(N_{11},N_{12})^\top$) be the unit vector parallel to $S_0$ (resp. $S_1$).
Point $\left(\widetilde x_0,\widetilde y_0\right)$ sufficiently close to $(x_0,y_0)$ on
$S_0$ is written as $\left(x_0+rN_{01}, y_0+rN_{02}\right)$
and then orbit $\gamma\left(t,\widetilde x_0,\widetilde y_0\right)$ intersects $S_1$ at
\begin{equation*}
\left(\widetilde x_1,\widetilde y_1\right)=\left(x_1+V(r;x_0,y_0,T,S_0,S_1)N_{11}, y_1+V(r;x_0,y_0,T,S_0,S_1)N_{12}\right)
\end{equation*}
for $t=T(r)$ satisfying $T(0)=T$ as shown in Figure~\ref{Fig-trans}, where $r$ is sufficiently small.
\begin{figure}[htp]
\centering
   \includegraphics[width=0.52\textwidth]{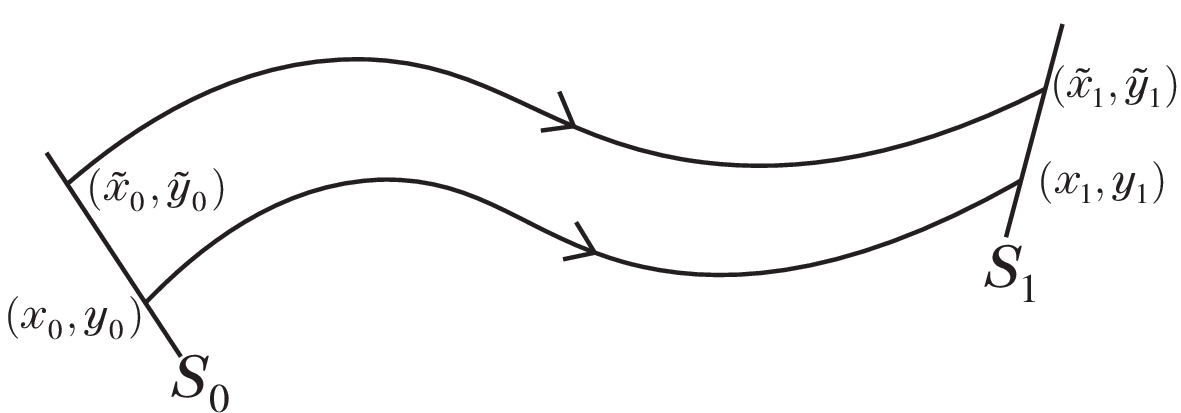}
\caption{classic transition map}
\label{Fig-trans}
\end{figure}
Function $V(r;x_0,y_0,T,S_0,S_1)$ is called a {\it transition map} as in \cite{ZhangJ98}
and is generally used to analyze dynamical behaviors of limit cycles and homoclinic orbits
of system~\eqref{trans-sys}. As in \cite{JKHaleBook,KuznetsovBook,ZhangJ98}, for system~\eqref{trans-sys}
transition map $V(r;x_0,y_0,T,S_0,S_1)$ is $C^\infty$ in $r$ and
\begin{equation}
V(r;x_0,y_0,T,S_0,S_1)=V_1r+O(r^2)
\label{V-Expan}
\end{equation}
for $0<r\ll1$, where
\begin{equation*}
V_1:=\frac{\Delta_0}{\Delta_1}\exp\left\{\int_0^{T}\frac{\partial f\left(\gamma\left(s,x_0,y_0\right)\right)}{\partial x}+\frac{\partial g\left(\gamma\left(s,x_0,y_0\right)\right)}{\partial y}ds\right\}\ne0
\end{equation*}
and
\begin{eqnarray}
\Delta_0:=\det\left({\cal Z}(x_0,y_0),N_0\right), ~~~~\Delta_1:=\det\left({\cal Z}(x_0,y_0),N_1\right).
\label{D0D1}
\end{eqnarray}

\begin{proof}[Proof of Theorem~\ref{thm3}]
Without loss of generality, we assume that $m^+\ge m^-$.
As mentioned below (H), there are 14 types of $L^{cri}_*$ as shown in Figure~\ref{Fig-Loop}. For the type of Figure~\ref{Fig-Loop}(a),
we take ${\boldsymbol \lambda}^-={\boldsymbol 0}$ and let
\begin{equation*}
S_0:=\left\{\left(C,y\right):~y\in(-\epsilon,\epsilon)\right\},~~S_1:=\left\{\left(x,0\right):~x\in\left(P_x-\epsilon,P_x+\epsilon\right)\right\},
\end{equation*}
where $C$ is undetermined and $P^*:(P_x,0)$ is the intersection point of $L^{cri}_*$ on $\Sigma$ besides $O$.

To prove conclusion (a), we first consider the perturbations of the upper subsystem.
For the case that $m^+=3$, we take $\psi^+\left(x,{\boldsymbol k}^+\right)\equiv 0$ and
$\lambda^+_1<\lambda^+_2<\lambda^+_3<0$, which implies that $\left(\lambda^+_1,0\right)$ and $\left(\lambda^+_3,0\right)$ are visible bifurcating tangent points of transition system~\eqref{tpws}. By the proof of Theorem~\ref{thm2}, there exists $\lambda^+_2\in(\lambda^+_1,\lambda^+_3)$
such that orbit $\widetilde\gamma^+\left(t,\lambda^+_1,0\right)$ transversally intersects $\Sigma$ at $\left(Q_a,0\right)$ for forward direction and at $\left(P^+_a,0\right)$ for backward direction as shown in Figure~\ref{Fig-thm3}(a). Moreover, $\lambda^+_2<Q_a<\lambda^+_3<0$ and $\left(P^+_a,0\right)\in S_1$ as $|{\boldsymbol \lambda}^+|$ is sufficiently small.

For the case that $m^+\ge 5$, we take $\lambda^+_i=\left(i-m^+-1\right)\delta$ ($i=1,...,m^+$) for small $\delta>0$, which implies that $\left(\lambda^+_{2i-1},0\right)$ ($i=1,...,(m^++1)/2$) are visible bifurcating tangent points for transition system~\eqref{tpws}. For any $\ell\in\{1,...,\lfloor m^+/2\rfloor\}$, there exists $\psi^+\left(x,{\boldsymbol k}^+\right)$ such that there is an orbit $\widetilde\gamma^+\left(t,\lambda^+_1,0\right)$ passing through visible bifurcating tangent points $\left(\lambda^+_{2i-1},0\right)$ ($i=1,...,\ell$) by the proof of Theorem~\ref{thm2}. Further, it transversally intersects $\Sigma$ at $(P^+_a,0)$ for backward direction and at $(Q_a,0)$ for forward direction. Moreover, $\lambda^+_{2\ell}<Q_a<\lambda^+_{2\ell+1}<0$ and $(P^+_a,0)\in S_1$ as $\delta$ is sufficiently small.

Then we consider the lower subsystem of \eqref{tpws}, orbit $\widehat\gamma^-\left(t,Q_a,0\right)$ transversally intersects $\Sigma$ at $(P^-_a,0)$ for forward direction and $(P^-_a,0)\in S_1$ as $|{\boldsymbol \lambda}^+|$ is sufficiently small because $Q_a\to 0$ as $|{\boldsymbol \lambda}^+|\to 0$. Further, we take $C=Q_a$ in sector $S_0$ and define
\begin{equation}
D(y;Q_a):=P^-_a+V^-(y;Q_a,0,T^-_a,S_0,S_1)-P^+_a,
\label{Dydef}
\end{equation}
where $T^-_a$ satisfies that $\widehat\gamma^-\left(T^-_a,Q_a,0\right)=\left(P^-_a,0\right)^\top$. By \eqref{V-Expan}, $D(y;Q_a)=P^-_a-P^+_a+V^-_1y+O\left(y^2\right)$
implying that for any sufficiently small ${\boldsymbol \lambda}^+$ there is one $y_0\in(-\epsilon,\epsilon)$ such that $D(y_0;Q_a)=0$.
That is, there is a $T^-_a=T^-_a(y_0)>0$ such that
$\widehat\gamma^-\left(T^-_a(y_0),Q_a,y_0\right)=\left(P^+_a,0\right)^\top$
as shown in Figure~\ref{Fig-thm3}(a) for $y_0<0$.
Note that $L^{cro}(1)$ is already obtained if $y_0=0$.
\begin{figure}[htp]
\centering
\subfigure[$\widehat\gamma^-\left(t,Q_a,0\right)$ and $\widehat\gamma^-\left(t,Q_a,y_0\right)$]
 {
  \scalebox{0.4}[0.4]{
   \includegraphics{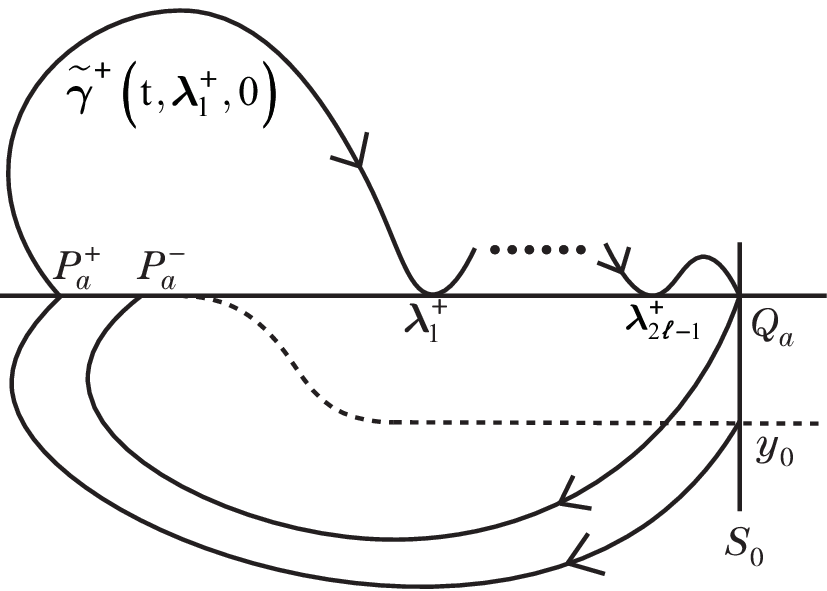}}}~~~~
\subfigure[$\widetilde\gamma^-\left(t,Q_a,0\right)$]
 {
  \scalebox{0.4}[0.4]{
   \includegraphics{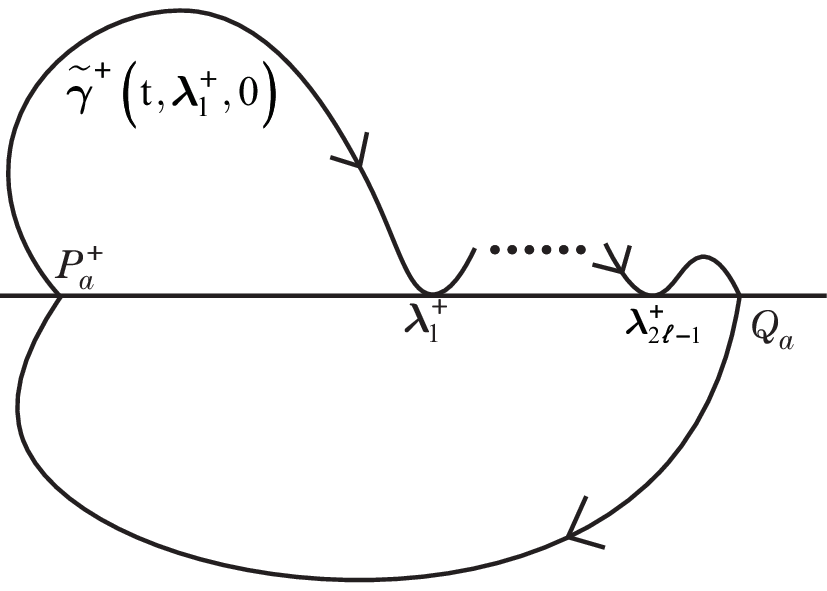}}}
\caption{$L^{cro}(\ell)$ for $y_0<0$}
\label{Fig-thm3}
\end{figure}
We define function $\psi^-\left(x,{\boldsymbol k}^-\right)$ by taking
\begin{eqnarray}
d=1,~~k^-_1=k^-_2=k^-_3=0,~~ k^-_4=y_0,~~
r_1=\frac{2P_x}{3},~~ r_2=\frac{P_x}{3}
\label{dkr12}
\end{eqnarray}
as shown by the dashed curve in Figure~\ref{Fig-thm3}(a).
Clearly, ${\boldsymbol k}^-\in{\mathbb R}^{4}\setminus{\cal K}$ and bifurcating tangent points of system~\eqref{pws4} are same with ones of transition system~\eqref{tpws}.
By $Q_a\in (\lambda_2^+, \lambda_3^+)$ for $m^+=3$ (resp.
$Q\in (\lambda_{2\ell}^+, \lambda_{2\ell+1}^+)$ for $m^+\ge 3$) and the definition of $\psi^-\left(x,{\boldsymbol k}^-\right)$, we get
$\psi^-\left(Q_a,{\boldsymbol k}^-\right)=k_4^-=y_0$.
Associated with Proposition~\ref{prop2}, corresponding orbit $\widetilde\gamma^-\left(t,Q_a,y_0-\psi^-\left(Q_a,{\boldsymbol k}^-\right)\right)$
of orbit $\widehat\gamma^-\left(t,Q_a,y_0\right)$ is exactly $\widetilde\gamma^-\left(t,Q_a,0\right)$.
Moreover, $\widetilde\gamma^-_1\left(t,Q_a,0\right)=\widehat\gamma^-_1\left(t,Q_a,y_0\right)$,
$\widetilde\gamma^-_2\left(t,Q_a,0\right) = \widehat\gamma^-_2\left(t,Q_a,y_0\right)-\psi^-\left(\widehat\gamma^+_1\left(t,Q_a,y_0\right),{\boldsymbol k}^-\right)< 0$
over interval $\left(0,T^-_a(y_0)\right)$ and $\widetilde\gamma^-_2\left(0,Q_a,0\right)=\widetilde\gamma^-_2\left(T^-_a(y_0),Q_a,0\right)=0$.
That is, orbit $\widetilde\gamma^-\left(t,Q_a,0\right)$ transversally intersects $\Sigma$ at point $\left(P^+_a,0\right)$ for forward direction as shown in Figure~\ref{Fig-thm3}(b).
Therefore, system~\eqref{pws4} has a $L^{cro}$ connecting visible bifurcating tangent point $\left(\lambda_1^+,0\right)$ for $m^+=3$
(resp. visible bifurcating tangent points $\left(\lambda^+_{2i-1},0\right)$ with $i=1,...,\ell$ for $m^+\ge 5$)
and transversally intersecting $\Sigma$ at $\left(Q_a,0\right), \left(P^+_a,0\right)$. The proof is the same if $y_0>0$.
The existence of $L^{cro}(\ell)$ is proved. The proof of conclusion (a) for the type of Figure~\ref{Fig-Loop}(a) is finished.

In the following we prove conclusion (b) and begin with perturbations of the upper subsystem.
For the case that $m^+=1$, we take $\psi^+\left(x,{\boldsymbol k}^+\right)\equiv0$ and sufficiently small $\lambda^+_1<0$.
Then orbit $\widetilde\gamma^+\left(t,\lambda^+_1,0\right)$ transversally intersects $\Sigma$ at $\left(P^+_b,0\right)\in S_1$ for backward direction as shown in Figure~\ref{Fig-thm3-1}(a).
For the case that $m^+=3$, we take $\psi^+\left(x,{\boldsymbol k}^+\right)\equiv 0$ and sufficiently small
$\lambda^+_1<\lambda^+_2<\lambda^+_3<0$. By the proof of Theorem~\ref{thm2}, for any $\ell\in\{1,2\}$ there exists $\lambda^+_2\in(\lambda^+_1,\lambda^+_3)$ such that
orbit $\widetilde\gamma^+\left(t,\lambda^+_1,0\right)$ passes through $\left(\lambda^+_{2i-1},0\right)$ ($i=1,...,\ell$) and transversally intersects $\Sigma$ at $(P^+_b,0)\in S_1$ for backward direction.
For the case that $m^+\ge 5$, we take $\lambda^+_i=(i-m^+-1)\delta$ ($i=1,...,m^+$) for small $\delta>0$, which implies that $\left(\lambda^+_{2i-1},0\right)$ ($i=1,...,(m^++1)/2$) are visible bifurcating tangent points. For any $\ell\in\{1,...,(m^++1)/2\}$, there exists $\psi^+\left(x,{\boldsymbol k}^+\right)$ such that orbit $\widetilde\gamma^+\left(t,\lambda^+_1,0\right)$ connecting visible bifurcating tangent points $\left(\lambda^+_{2i-1},0\right)$ ($i=1,...,\ell$) by the proof of Theorem~\ref{thm2}. Moreover, it transversally intersects $\Sigma$ at $(P^+_b,0)\in S_1$ for backward direction as $\delta$ is sufficiently small.

Now consider the lower subsystem of \eqref{tpws}. Orbit $\widehat\gamma^-\left(t,\lambda^+_1,0\right)$ transversally intersects $\Sigma$ at $\left(P^-_b,0\right)\in S_1$ for forward direction as $|{\boldsymbol \lambda}^+|$ is sufficiently small.
Taking $C=\lambda^+_{2\ell-1}$ in sector $S_0$ and define
\begin{equation}
D(y;\lambda^+_{2\ell-1}):=P^-_b+V^-(y;\lambda^+_{2\ell-1},0,T^-_b,S_0,S_1)-P^+_b,
\label{Dydef2}
\end{equation}
where $T^-_b$ satisfies that $\widehat\gamma^-\left(T^-_b,\lambda^+_{2\ell-1},0\right)=(P^-_b,0)^\top$.
As the discussion for \eqref{Dydef}, we get one zero $y_0\in(-\epsilon,\epsilon)$ of $D(y;\lambda^+_{2\ell-1})$ and
use it to construct perturbation function $\psi^-\left(x,{\boldsymbol k}^-\right)$ via \eqref{dkr12}.
Then, system~\eqref{pws4} has a $L^{cri}(\ell)$ connecting $\ell$ visible bifurcating tangent points $(\lambda^+_{2i-1},0)$ ($i=1,...,\ell$) as shown in Figure~\ref{Fig-thm3-1}(b).
The proof is same if $y_0>0$ and conclusion(b) for the type in Figure~\ref{Fig-Loop}(a) is proved.

\begin{figure}[htp]
\centering
\subfigure[$\widehat\gamma^-\left(t,\lambda^+_1,0\right)$ and $\widehat\gamma^-\left(t,\lambda^+_1,y_0\right)$]
 {
  \scalebox{0.4}[0.4]{
   \includegraphics{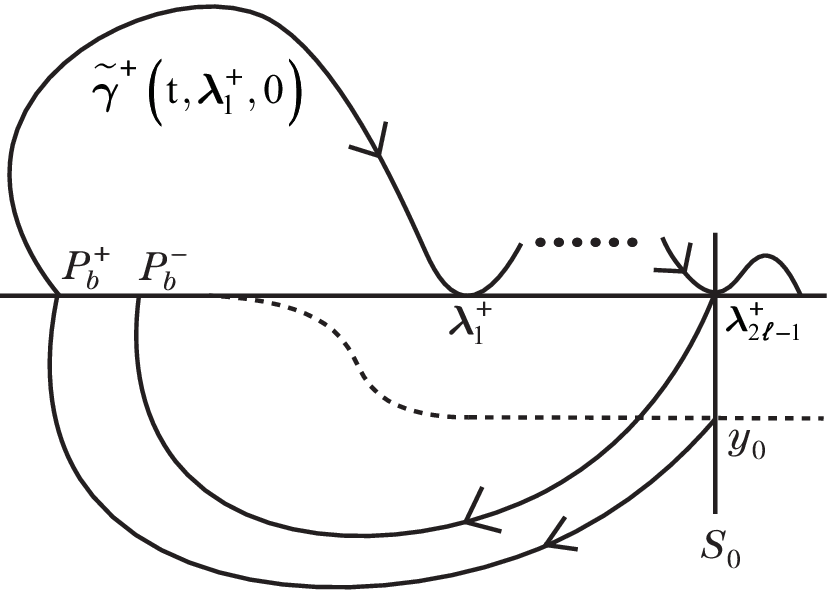}}}~~~~
\subfigure[$\widetilde\gamma^-\left(t,\lambda^+_1,0\right)$]
 {
  \scalebox{0.4}[0.4]{
   \includegraphics{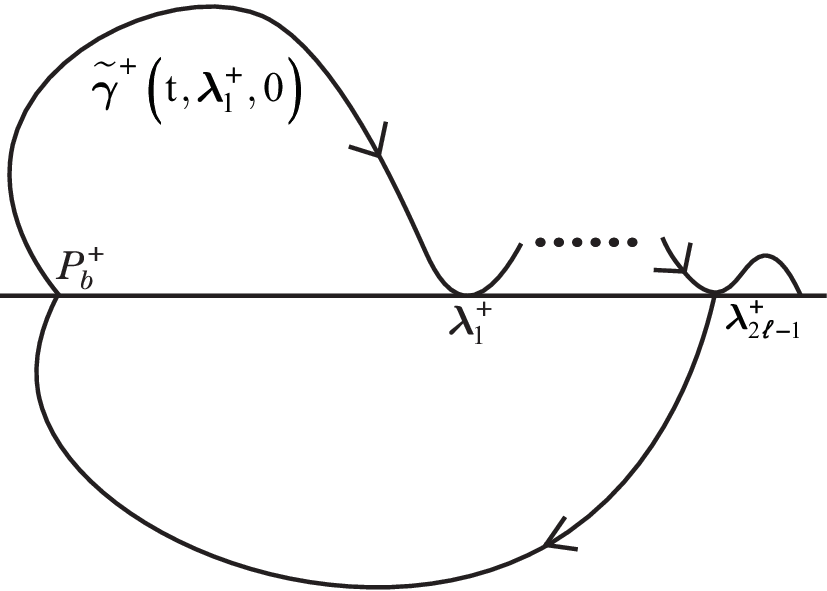}}}
 \caption{$L^{cri}(\ell)$ for $y_0<0$}
\label{Fig-thm3-1}
\end{figure}

For the other 13 types of loop $L^{cri}_*$ as shown in Figure~\ref{Fig-Loop}(b)-(n), we take ${\boldsymbol \lambda}^-={\boldsymbol 0}$ and
\begin{equation*}
\left\{
\begin{aligned}
&\lambda^+_1<...<\lambda^+_{m^+}<0,&&{\rm~for~}1\le m^+\le 4,\\
&\lambda^+_i=\left(i-m^+-1\right)\delta,&&{\rm~for~}m^+\ge 5
\end{aligned}
\right.
\end{equation*}
if $L^{cri}_*$ is the type of Figure~\ref{Fig-Loop}(c)-(e)(g)-(n) and
\begin{equation*}
\left\{
\begin{aligned}
&0<\lambda^+_1<...<\lambda^+_{m^+},&&{\rm~for~}1\le m^+\le 4,\\
&\lambda^+_i=i\delta,&&{\rm~for~}m^+\ge 5
\end{aligned}
\right.
\end{equation*}
if $L^{cri}_*$ is the type of Figure~\ref{Fig-Loop}(b)(f).
Then conclusions (a) and (b) can be proved similarly and we omit the statements.
\end{proof}

To prove Theorems~\ref{thm4} and \ref{thm5}, we need a transition map for tangent case, i.t.,
orbit $\gamma(t,x_0,y_0)$ is tangent with $S_0$ and transversally intersects $S_1$ as shown in Figure~\ref{Fig-Exam}.
In tangent case, $V_1=0$ because $\Delta_0=0$ and $\Delta_1\ne0$, which means that transition map
$V(r;x_0,y_0,T,S_0,S_1)$ starts at least degree $2$.
This is different from the classic transition map given in \eqref{V-Expan}.

\begin{lm} Assume that orbit  $\gamma\left(t,x_0,y_0\right)$ of system~\eqref{trans-sys}
is tangent with horizontal line $S_0$ at $(x_0,y_0)$ and transversally intersects $S_1$ at $(x_1,y_1)$ at time $t=T$ as shown in Figure~\ref{Fig-Exam}. If there exists an integer $m\ge 1$ such that
\begin{equation*}
g(x_0,y_0)=\frac{\partial g}{\partial x}(x_0,y_0)=...=\frac{\partial^{m-1} g}{\partial x^{m-1}}(x_0,y_0)=0,~~\frac{\partial^{m} g}{\partial x^{m}}(x_0,y_0)\ne0,
\end{equation*}
then transition map
\begin{equation*}
V\left(r;x_0,y_0,T,S_0,S_1\right)=V_{m+1}r^{m+1}+O\left(r^{m+2}\right),
\end{equation*}
where
\begin{equation*}
V_{m+1}:=\frac{\partial^m g}{\partial x^m}(x_0,y_0)\frac{N^{m+1}_{01}}{\left(m+1\right)!\Delta_1}\exp\left\{\int_0^{T}\frac{\partial f\left(\gamma\left(s,x_0,y_0\right)\right)}{\partial x}
+\frac{\partial g\left(\gamma\left(s,x_0,y_0\right)\right)}{\partial y}ds\right\}\ne0
\end{equation*}
and $N_{01}$ is the first component of unit vector $N_0$ parallel to $S_0$ and
$\Delta_1$ is defined in \eqref{D0D1}.
\label{plm1}
\end{lm}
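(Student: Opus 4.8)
The plan is to factor $V(r;x_0,y_0,T,S_0,S_1)$ through a \emph{tangency map} that records the small vertical deviation picked up by the perturbed orbit while it skirts the tangency, followed by an ordinary (regular) transition map onto $S_1$, for which the expansion \eqref{V-Expan} is available. Since the orbit $\gamma(t,x_0,y_0)$ is a regular curve tangent to the horizontal line $S_0$ at $(x_0,y_0)$, we have $g(x_0,y_0)=0$ and $f(x_0,y_0)\neq0$, so near $(x_0,y_0)$ time may be eliminated and the orbits satisfy $dy/dx=h(x,y)$ with $h:=g/f$. Let $Y(x)$ be the solution of this scalar equation with $Y(x_0)=y_0$, i.e. the arc of the unperturbed orbit near the tangency viewed as a graph. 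The perturbed orbit starts at $(x_0+rN_{01},y_0)\in S_0$, where $N_0=(N_{01},0)$ with $N_{01}=\pm1$; on the vertical line $\{x=x_0+rN_{01}\}$ it sits at height $y_0$, while the unperturbed orbit sits at height $Y(x_0+rN_{01})$, so the two differ there by $\Delta y(r):=y_0-Y(x_0+rN_{01})$.

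First I would expand $\Delta y(r)$. Writing $g=f\,h$ and differentiating in $x$, the Leibniz rule together with $g=\partial_x g=\cdots=\partial_x^{m-1}g=0$ at $(x_0,y_0)$ gives, by induction on $j$, $\partial_x^{j}h(x_0,y_0)=0$ for $j=0,\dots,m-1$ and $\partial_x^{m}h(x_0,y_0)=\partial_x^{m}g(x_0,y_0)/f(x_0,y_0)\neq0$. Feeding this into $Y'(x)=h(x,Y(x))$, $Y(x_0)=y_0$ and inducting again, $Y^{(j)}(x_0)=0$ for $j=1,\dots,m$ while $Y^{(m+1)}(x_0)=\partial_x^{m}g(x_0,y_0)/f(x_0,y_0)$; Taylor's theorem then yields
\begin{equation*}
\Delta y(r)=-\frac{\partial_x^{m}g(x_0,y_0)}{(m+1)!\,f(x_0,y_0)}\,N_{01}^{\,m+1}\,r^{m+1}+O\!\left(r^{m+2}\right).
\end{equation*}
I expect this step to be the main obstacle: one must show carefully that $h=g/f$ vanishes to order exactly $m$ in $x$ at the tangency, and --- via a Gronwall/bootstrap estimate for $Y(x_0+u)-y_0=\int_{x_0}^{x_0+u}h(x,Y(x))\,dx$ --- that the transverse dependence of $h$ along the arc perturbs the leading term only by $O(r^{m+2})$, so that no lower power of $r$ survives.

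Finally I would transport $\Delta y(r)$ onto $S_1$. Applying \eqref{V-Expan} to the regular transition map between the transverse sections $\{x=x_0+rN_{01}\}$ (oriented by $(0,1)$) and $S_1$: it is linear to leading order, and as $r\to0$ its slope tends to $\det\!\big({\cal Z}(x_0,y_0),(0,1)\big)\,\Delta_1^{-1}\exp\{\int_0^{T}(\partial_x f+\partial_y g)(\gamma(s,x_0,y_0))\,ds\}=f(x_0,y_0)\,\Delta_1^{-1}\exp\{\cdots\}$, with $\Delta_1$ as in \eqref{D0D1} and time of flight $\to T$. The perturbed orbit meets that section at signed height $\Delta y(r)$ relative to the unperturbed crossing, hence $V(r)=f(x_0,y_0)\,\Delta_1^{-1}\exp\{\cdots\}\,\Delta y(r)+O\!\big(\Delta y(r)^2\big)+O\!\big(r\,\Delta y(r)\big)$; since $V$ is $C^{\infty}$ in $r$ (smooth dependence on initial data and the implicit function theorem at the transverse crossing of $S_1$) and $\Delta y(r)=O(r^{m+1})$ with $m\geq1$, all error contributions are $O(r^{m+2})$. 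Substituting the expansion of $\Delta y(r)$ gives $V(r)=V_{m+1}r^{m+1}+O(r^{m+2})$ with $V_{m+1}$ of the stated form --- the consistent choice of orientations for $N_0$, for the auxiliary section and for $N_1$ being precisely what fixes the overall sign of the constant --- and $V_{m+1}\neq0$ because $\partial_x^{m}g(x_0,y_0)\neq0$, $f(x_0,y_0)\neq0$ and $\Delta_1\neq0$. The transport step itself is routine given \eqref{V-Expan}.
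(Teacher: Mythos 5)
Your proposal is correct in substance but takes a genuinely different route from the paper's proof. The paper starts from the classical differential identity $\delta_1(r)\,dV/dr=\delta_0(r)E(r)$ for the transition map, in which $\delta_0(r)=N_{01}g(x_0+N_{01}r,y_0)$ vanishes to order exactly $m$ at $r=0$; differentiating this identity $k$ times by Leibniz and evaluating at $r=0$ yields $V^{(i+1)}(0)=0$ for $i=0,\dots,m-1$ and then, at $k=m$, the value of $V^{(m+1)}(0)$ in one line. You instead factor $V$ as a tangency correction on the vertical section through the perturbed starting point followed by a regular transition onto $S_1$, and compute the correction $\Delta y(r)=y_0-Y(x_0+rN_{01})$ by showing that the orbit, viewed as a graph $Y(x)$, has contact of order exactly $m+1$ with the horizontal line $S_0$. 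Your two inductions (on $\partial_x^{j}(g/f)$ via Leibniz, and on $Y^{(j)}(x_0)$ via the chain rule) are both correct; incidentally, no Gronwall or bootstrap argument is needed at the step you flag as the main obstacle, since $Y$ is $C^\infty$ and Taylor's theorem with the derivatives you computed already gives $\Delta y(r)=-\frac{\partial_x^{m}g(x_0,y_0)}{(m+1)!\,f(x_0,y_0)}N_{01}^{m+1}r^{m+1}+O(r^{m+2})$. The error bookkeeping $O\bigl(\Delta y^2\bigr)+O\bigl(r\,\Delta y\bigr)=O\bigl(r^{m+2}\bigr)$ is valid because $m\ge1$. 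What the paper's route buys is brevity and the constant directly in the stated form; what yours buys is a more elementary argument that uses only the nondegenerate expansion \eqref{V-Expan} as a black box and makes the geometric source of the order $m+1$ (the contact order of the orbit with $S_0$) transparent. The one discrepancy is the overall sign of $V_{m+1}$: you obtain $-\partial_x^{m}g\,N_{01}^{m+1}\bigl((m+1)!\,\Delta_1\bigr)^{-1}E(0)$ versus the paper's $+$. This traces to orientation conventions in the Diliberto-type formula (note that the paper's own $\delta_0=\det(N_0,{\cal Z})$ and $\delta_1=\det({\cal Z},N_1)$ use opposite column orders), and since only $V_{m+1}\ne0$ and the parity of $m+1$ matter downstream, this is a conventions issue rather than an error; still, you should fix the orientations of the auxiliary section and of $N_1$ explicitly if you want the constant exactly as stated.
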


\begin{figure}[h]
\centering
\includegraphics[width=0.5\textwidth]{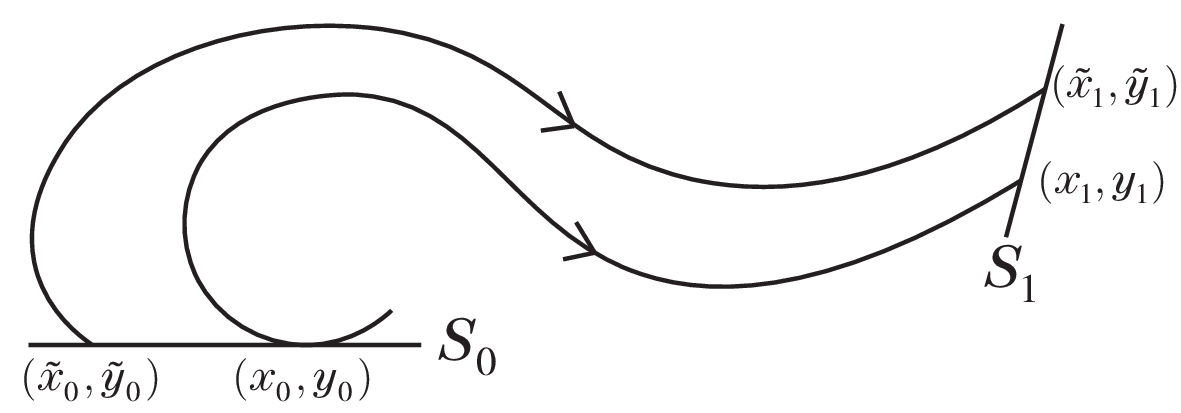}
\caption{transition map for tangent case}
\label{Fig-Exam}
\end{figure}

\begin{proof}
By \cite{Ponce15,JKHaleBook,KuznetsovBook,ZhangJ98} for any small $r\ne0$
\begin{equation}
\delta_1(r)\frac{dV(r)}{dr} = \delta_0(r)E(r),
\label{plm1-equa1}
\end{equation}
where
\begin{equation*}
E(r):=\exp\left\{\int_0^{T(r)}\frac{\partial f\left(\gamma\left(s,x_0+N_{01}r,y_0\right)\right)}{\partial x}+\frac{\partial g\left(\gamma\left(s,x_0+N_{01}r,y_0\right)\right)}{\partial y}ds\right\}
\end{equation*}
and $\delta_1(r):=\det\left({\cal Z}(x_1+V(r)N_{11},y_1+V(r)N_{12}),N_1\right), \delta_0(r):=N_{01}g(x_0+N_{01}r,y_0)$.
Then, from \eqref{plm1-equa1}
\begin{equation}
\sum_{i=0}^{k}C^i_k \frac{d^{i}\delta_1(r)}{dr^{i}}\frac{d^{1+k-i}V(r)}{dr^{1+k-i}}=\sum_{i=0}^{k}C^i_k \frac{d^{i}\delta_0(r)}{dr^{i}}\frac{d^{k-i}E(r)}{dr^{k-i}}, ~~~k\ge 1.
\label{k-der}
\end{equation}
Associated with
\begin{equation*}
\frac{d^{i}\delta_0(0)}{dr^{i}}=N^{i+1}_{01}\frac{\partial^{i} g }{\partial x^i}(x_0,y_0)=0, ~~~i=0,...,m-1,
\end{equation*}
we get
\begin{equation*}
\frac{d^{i+1} V(0)}{d r^{i+1}}=0, ~~~i=0,...,m-1.
\end{equation*}
Further, taking $k=m$ we get from \eqref{k-der} that
\begin{equation*}
\frac{d^{m+1} V(0) }{d r^{m+1}}=\frac{d^{m}\delta_0(0)}{dr^{m}}\frac{E(0)}{\delta_1(0)}=\frac{\partial^m g}{\partial x^m}(x_0,y_0)\frac{N^{m+1}_{01}}{\Delta_1}E(0)\ne 0.
\end{equation*}
This lemma is proved.
\end{proof}

Note that Lemma~\ref{plm1} can be regarded as a generalization from $m=1$ (see \cite[Proposition 1.]{Ponce15} and \cite[Lemma~2.3]{Han13}))
to general $m\ge 1$. Lemma~\ref{plm1} states the dependence of the degeneration multiplicity of transition map on the degeneration multiplicity
of tangent point $(x_0,y_0)$, which helps us prove Theorems~\ref{thm4} and \ref{thm5}.

\begin{proof}[Proof of Theorem~\ref{thm4}]
Since $m^\pm\ge 5$, we only need to consider $L^{cri}_*$ being of $10$ types as shown in Figure~\ref{Fig-Loop}(a)-(d),(f)-(h),(j)(k)(m).
Without loss of generality, we assume that $m^+\ge m^-$ and begin with $L^{cri}_*$ shown in Figure~\ref{Fig-Loop}(a).
In this case, $m^\pm$ are both odd.
We take ${\boldsymbol \lambda}^-={\boldsymbol 0}, \lambda^+_i=\left(i-m^+-1\right)\delta$ ($i=1,...,m^+$)
for small $\delta>0$, small sectors
$S_V=\left\{(0,y):~y\in(-\epsilon,\epsilon)\right\}$, $S_H=\left\{(x,0):~x\in(-\epsilon,0)\right\}$
and $S_1$ as in the first paragraph of the proof of Theorem~\ref{thm3}.

{\it Step 1. Construct function $\psi^-\left(x,{\boldsymbol k}^-\right)$.}
Orbit $\widehat\gamma^+\left(t,\lambda^+_1,\delta^5\right)$ transversally intersects $\Sigma$ at $\left(P^+,0\right)\in S_1$ for backward direction as $\delta$ is sufficiently small. On the other hand, orbit $\widehat\gamma^-\left(t,0,0\right)$ transversally intersects $\Sigma$ at $\left(P_x,0\right)$ for forward direction.
As in the proof of Theorem~\ref{thm3}, there is one $y_0\in(-\epsilon,\epsilon)$ such that
$\widehat\gamma^-\left(t,0,y_0\right)$ transversally intersects $\Sigma$ at $\left(P^+,0\right)$ for forward direction.
Define function $\psi^-\left(x,{\boldsymbol k}^-\right)$ via~\eqref{dkr12}. By Proposition~\ref{prop2} orbit $\widetilde\gamma^-\left(t,0,0\right)$ transversally intersects
$\Sigma$ at $\left(P^+,0\right)$ for forward direction.

{\it Step 2. Define the displacement function.}
For any $x\in[\lambda^+_1,0]$, orbit $\widetilde\gamma^-\left(t,x,0\right)$ transversally intersects $\Sigma$ at $\left(P^-(x),0\right)$ for forward direction and
\begin{equation*}
P^-(x)=P^++V^-(x;0,0,T^-_1,S_H,S_1)=P^++V^-_{m^-+1}x^{m^-+1}+O\left(x^{m^-+2}\right)
\end{equation*}
by Lemma~\ref{plm1}, where $T^-$ satisfies $\widetilde\gamma^-\left(T^-,0,0\right)=(P^+,0)^\top$. On the other hand, for any $\alpha\in[\lambda^+_1,0]$ orbit $\widehat\gamma^+\left(t,P^-(x),0\right)$ transversally intersects sector $S(\alpha)$ at $\left(\alpha,Y(P^-(x),\alpha)\right)$ for forward direction, where $S(\alpha):=\left\{\left(\alpha,y\right):~y\in(-1,1)\right\}$.
Further, for $(x,\alpha)\in[\lambda^+_1,0]\times[\lambda^+_1,0]$ we have
\begin{eqnarray*}
\begin{aligned}
Y\left(P^-(x),\alpha\right)
& = Y(P^+,\alpha)+V^+(P^-(x);P^+,0,T^+(\alpha),S_1,S(\alpha))\\
& = Y(P^+,\alpha)+V^+_1(\alpha)\left(P^-(x)-P^+\right)+O\left(\left(P^-(x)-P^+\right)^2\right)\\
& = Y(P^+,\alpha)+V^+_1(\alpha)V^-_{m^-+1}x^{m^-+1}+O\left(x^{m^-+2}\right)
\end{aligned}
\end{eqnarray*}
by Lemma~\ref{plm1}, where $T^+(\alpha)$ satisfies that $\widetilde\gamma^+\left(T^+(\alpha),P^+,0\right)=\left(\alpha,Y(P^+,\alpha)\right)^\top$. Note that $Y\left(P^+,\lambda^+_1\right)=\delta^5$ and for any $\alpha\in(\lambda^+_1,0)$ we obtain $Y\left(P^+,\alpha\right)=\delta^5+o\left(\delta^5\right)$ by \eqref{Theta-Esti}. Then by $V^+_1\left(\lambda^+_1\right)\ne0$,
\begin{equation}
Y\left(P^-(x),\alpha\right) = \delta^5+o\left(\delta^5\right)>0
\label{Estimate-Y}
\end{equation}
for sufficiently small $\delta$. The displacement function ${\cal Y}(x)$ is defined over interval $[\lambda^+_1,0]$ as
\begin{equation}
{\cal Y}(x):=Y\left(P^-(x),x\right)-\psi^+\left(x,{\boldsymbol k}^+\right),
\label{Dis-Y}
\end{equation}
where $\psi^+\left(x,{\boldsymbol k}^+\right)$ is undetermined and satisfies the requirement in Proposition~\ref{prop1}.
Thus, a zero $x_0$ of ${\cal Y}(x)$ corresponds to one loop of system~\eqref{pws4} if $Y\left(P^-(x_0),x\right)-\psi^+\left(x,{\boldsymbol k}^+\right)>0$ for $x\in[\lambda^+_1,x_0)$ by Proposition~\ref{prop2}.

{\it Step 3. Construct function $\psi^+\left(x,{\boldsymbol k}^+\right)$.}
Function $\psi^+\left(x,{\boldsymbol k}^+\right)$ is firstly defined by taking $d=(m^++1)/2,~k^+_1=-(m^++2)\delta,~k^+_{2d+1}=0$ and
\begin{equation*}
\left\{
\begin{aligned}
&k^+_i=\lambda^+_{i-1},&&{\rm~for~}i=2,...,2d,\\
&k^+_{i+2d+1}=0,&&{\rm~for~}i=1,...,d,
\end{aligned}
\right.
\end{equation*}
i.e., $\psi^+\left(x,{\boldsymbol k}^+\right)\equiv 0$.
In the following we change the values of $k^+_{i+2d+1}$ ($i=1,...,d$) to obtain the coexistence of bifurcating
$L^{cri}(1)$ and bifurcating $L^{cro}(1)$.
We only need to consider three cases:
$$
{\rm (C1)} ~\ell=(m^+-1)/2,~~~~{\rm (C2)} ~\ell=(m^+-1)/2-1,~~~~{\rm (C3)} ~0\le \ell\le (m^+-1)/2-2.
$$

For case (C1), we take $k^+_{i+2d+1}=Y\left(P^-\left(\lambda^+_{2i-1}\right),\lambda^+_{2i-1}\right)$ for $i=1,...,d$.
Clearly, ${\boldsymbol k}^+$ satisfies \eqref{K} by \eqref{Estimate-Y} and $\lambda^+_{2i-1}$ ($i=1,...,(m^++1)/2$) are zeros of ${\cal Y}(x)$ defined in \eqref{Dis-Y}.
These $\ell+1$ zeros correspond to $\ell+1$ bifurcating $L^{cri}(1)$.
Therefore, system~\eqref{pws4} has $\ell+1$ bifurcating $L^{cri}(1)$ connecting $\left(\lambda^+_{2i-1},0\right)$ ($i=1,...,(m^++1)/2$) respectively.
Clearly, there is no bifurcating $L^{cro}(1)$ because there is no more visible tangent points outside these
bifurcating $L^{cri}(1)$. Case (C1) is proved and an example for $m^+=5$ is shown in Figure~\ref{Fig-thm4-Exam1}(a).

\begin{figure}[htp]
\centering
\subfigure[three $L^{cri}(1)$]
 {
  \scalebox{0.4}[0.4]{
   \includegraphics{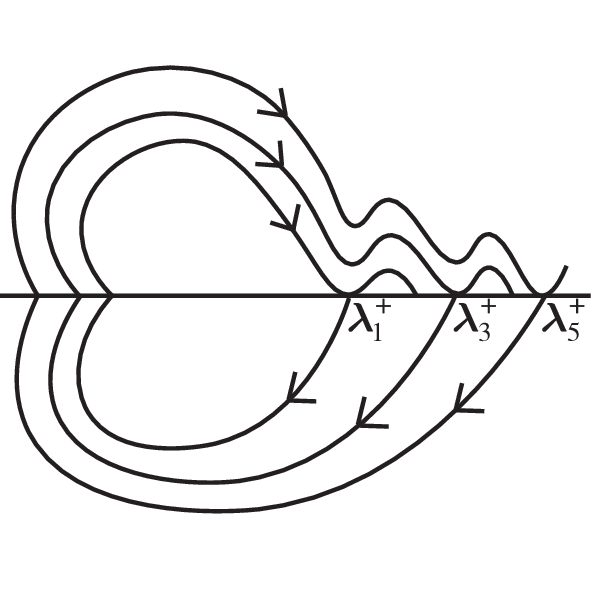}}}~~~
\subfigure[two $L^{cri}(1)$, one $L^{cro}(1)$]
 {
  \scalebox{0.4}[0.4]{
   \includegraphics{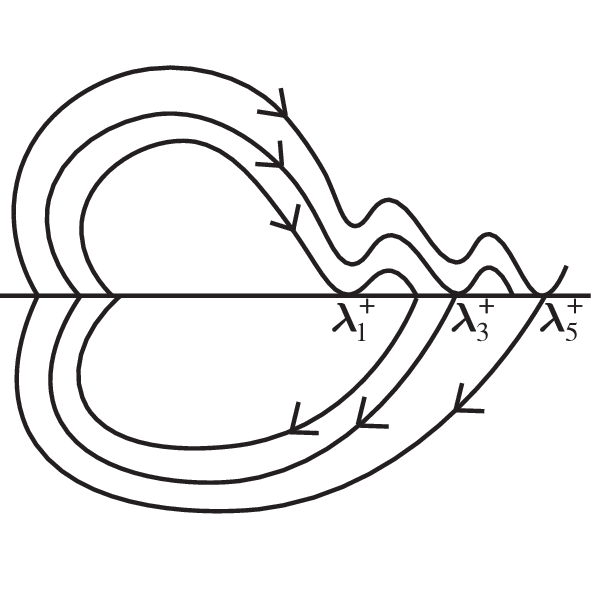}}}~~~
   \subfigure[one $L^{cri}(1)$, two $L^{cro}(1)$]
 {
  \scalebox{0.4}[0.4]{
   \includegraphics{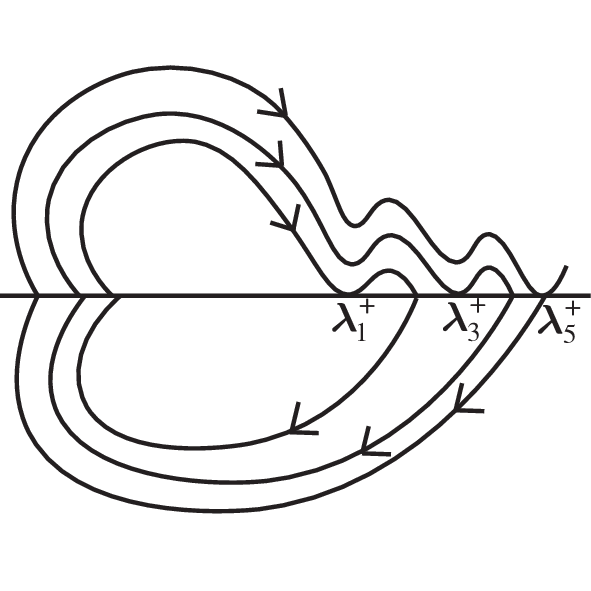}}}
 \caption{bifurcating $L^{cri}(1)$ and bifurcating $L^{cro}(1)$}
\label{Fig-thm4-Exam1}
\end{figure}

For case (C2), we take $k^+_{1+2d+1}=0$ and
$k^+_{i+2d+1}=Y\left(P^-\left(\lambda^+_{2i-1}\right),\lambda^+_{2i-1}\right)$ for $i=2,...,d$,
which implies that system~\eqref{pws4} has $(m^+-1)/2$ bifurcating $L^{cri}(1)$ connecting $\left(\lambda^+_{2i-1},0\right)$ ($i=2,...,(m^++1)/2$) respectively. Consider the interval $\left[\lambda^+_2,\lambda^+_3\right]$, by \eqref{Estimate-Y} we obtain that ${\cal Y}\left(\lambda^+_{2}\right)=Y\left(P^-\left(\lambda^+_2\right),\lambda^+_2\right)>0$. Thus, orbit $\widetilde\gamma^+\left(P^-\left(\lambda^+_2\right),0\right)$ transversally intersects $\Sigma$ at $\left(\widetilde\lambda^+_2,0\right)$ for forward direction and $\lambda^+_2<\widetilde\lambda^+_2<\lambda^+_3$.
On the other hand, there is a bifurcating $L^{cri}(1)$ connecting $(\lambda^+_3,0)$. Further, for any $P_3$ satisfying $0<\lambda^+_{3}-P_3\ll 1$ orbit $\widetilde\gamma^+\left(t,P^-\left(P_3\right),0\right)$ transversally intersects $\Sigma$ at $\left(\widetilde P_3,0\right)$. This bifurcating $L^{cri}(1)$ is unstable by \cite{Ponce15,Han13},
which implies that $P_3>\widetilde P_3$. Thus, system~\eqref{pws4} has a crossing limit cycle intersecting $\Sigma$ transversally
at $\left(Q_2,0\right)$ and $Q_2\in(\lambda^+_2,P_3)$.
Then we take $k^+_{1+2d+1}=Y\left(P^-(Q_2),\lambda^+_1\right)$ and consequently, system~\eqref{pws4} still has $(m^+-1)/2$ bifurcating $L^{cri}(1)$ connecting $\left(\lambda^+_{2i-1},0\right)$ ($i=2,...,(m^++1)/2$) respectively and $1$ bifurcating $L^{cro}(1)$ connecting $\left(\lambda^+_1,0\right)$. The case (C2) is proved and an example for $m^+=5$ is shown in Figure~\ref{Fig-thm4-Exam1}(b).

For case (C3), we take $k^+_{i+2d+1}=Y\left(P^-\left(\lambda^+_{2i-1}\right),\lambda^+_{2i-1}\right)$ for $i=n,...,d$,
where $n:=(m^++1)/2-\ell\ge 3$.
As the above, system~\eqref{pws4} has $d-n-1$ (i.e., $\ell+1$) bifurcating $L^{cri}(1)$.
It can be proved by a similar way that system~\eqref{pws4} has a crossing limit cycle transversally intersecting $\Sigma$ at $(Q_{2n-2},0)$ satisfying that $Q_{2n-2}\in\left(\lambda^+_{2n-2},\lambda^+_{2n-1}\right)$. By taking $k^+_{n-1+2d+1}=Y\left(P^-\left(Q_{2n-2}\right),\lambda^+_{2n-3}\right)$, system~\eqref{pws4} has $1$ bifurcating $L^{cro}(1)$ connecting $\left(\lambda^+_{2n-3},0\right)$. Since ${\cal Y}\left(\lambda^+_{2n-4}\right) > 0$ and ${\cal Y}\left(\lambda^+_{2n-3}\right) < 0$,
system~\eqref{pws4} has a crossing limit cycle transversally intersecting $\Sigma$ at $\left(Q_{2n-4},0\right)$ and $Q_{2n-4}\in\left(\lambda^+_{2n-4},\lambda^+_{2n-3}\right)$. By taking $k^+_{n-2+2d+1}=Y\left(P^-\left(Q_{2n-4}\right),\lambda^+_{2n-5}\right)$, system~\eqref{pws4} has $1$ bifurcating $L^{cro}(1)$ connecting $\left(\lambda^+_{2n-5},0\right)$, which finishes the proof of $n=3$. If $n>3$, corresponding $k^+_{i+2d+1}$ ($i=1,...,n-3$) is obtained one by one via analysis of $k^+_{n-2+2d+1}$ and consequently,
system~\eqref{pws4} has $n-1$ (i.e., $(m^+-1)/2-\ell$) bifurcating $L^{cro}(1)$ connecting $\left(\lambda^+_{2i-1},0\right)$ respectively.
The case (C3) is proved and an example for $m^+=5$ is shown in Figure~\ref{Fig-thm4-Exam1}(c).

For other $9$ types of $L^{cri}_*$ shown in Figure~\ref{Fig-Loop}(b)-(d),(f)-(h),(j)(k)(m),
this theorem can be proved similarly and we omit their proofs.
\end{proof}

Not only the numbers of $L^{cri}(1)$ and $l^{cro}(1)$ are proved, but also the location of them is given in the proof of Theorem~\ref{thm4}. To prove Theorem~\ref{thm5}, we give the location of all $L^{cri}(1)$ of case (C1) as
\begin{equation}
L^{cri}_1~\hookrightarrow~L^{cri}_3~\hookrightarrow~...~\hookrightarrow~L^{cri}_{m^+-2}~\hookrightarrow~L^{cri}_{m^+},
\label{Nest1}
\end{equation}
where $L^{cri}_{2i-1}$ ($i=1,...,(m^++1)/2$) denotes the bifurcating $L^{cri}(1)$ connecting $(\lambda^+_{2i-1},0)$ and $L^{cri}_1~\hookrightarrow~L^{cri}_3$ means that $L^{cri}_1$ lies in the region surrounded by $L^{cri}_3$.

\begin{proof}[Proof of Theorem~\ref{thm5}]Without loss of generality, we assume that $m^+\ge m^-$.

{\it Step 1. Prove conclusion that $\beta_c+\beta_s\ge 1$.}
By Theorem~\ref{thm3}, for $L^{cri}_*$ shown in Figure~\ref{Fig-Loop}(a) there exists ${\boldsymbol \lambda}^\pm$ and $\psi^\pm\left(x,{\boldsymbol k}^\pm\right)$ such that system~\eqref{pws4} has a bifurcating $L^{cri}(1)$ connecting $\left(\lambda^+_1,0\right)$. Then for $k^-_4\in(y_0-\epsilon,y_0+\epsilon)$ we define displacement function
\begin{equation*}
D(x):=P^-_b(k^-_4)+V^-(x;\lambda^+_1,0,T^-_b(k^-_4),S_0,S_1)-P^+_b-V^+(x;\lambda^+_1,0,T^+_b,S_0,S_1),
\end{equation*}
where $y_0$ is the zero of $D\left(y;\lambda^+_1\right)$ defined by \eqref{Dydef2} in the proof of Theorem~\ref{thm3}, $S_0:=\left\{(x,0):~x\in(\lambda^+_1-\epsilon,\lambda^+_1]\right\}, S_1:=\left\{(x,0):~x\in(P^+_b-\epsilon,P^+_b+\epsilon)\right\}$ and $\widetilde\gamma^+\left(T^+_b,\lambda^+_1,0\right)=\left(P^+_b,0\right)^\top$, $\widetilde\gamma^+\left(T^-_b(k^-_4),\lambda^+_1,0\right)=\left(P^-_b(k^-_4),0\right)^\top$.
Taking unit vectors $N_{0,1}=(1,0)^\top$, we get
\begin{equation*}
D(x)=P^-_b(k^-_4)-P^+_b+V^-_1(k^-_4)(x-\lambda^+_1)+O\left((x-\lambda^+_1)^2\right)
\end{equation*}
by \eqref{V-Expan} and Lemma~\ref{plm1}, where $V^-_1(k^-_4)<0$. Clearly, $D(\lambda^+_1)=0$ for $k^-_4=y_0$ because $P^-_b(k^-_4)=P^+_b$. Then we take $k^-_4=y_0-\alpha$ for $0<\alpha\ll 1$ and consequently, $P^-_b\left(y_0-\alpha\right)<P^+_b$ by Proposition~\ref{prop2}. Implicit Function Theorem shows that there is one $Q_c$ such that $D(Q_c)=0$ and $Q_c<\lambda^+_1$, which implies that system~\eqref{pws4} has a crossing limit cycle intersecting $\Sigma$ at $(Q_c,0)$ as shown in Figure~\ref{Fig-thm5-1}(a). On the other hand, we take $k^-_4=y_0+\alpha$ and then orbit $\widetilde\gamma^-\left(t,P^+_b,0\right)$ intersects $\Sigma$ at $(Q_s,0)$ for backward direction and $Q_s>\lambda^+_1$. Since $\left\{(x,0):~x\in(\lambda^+_1,\lambda^+_2)\right\}\subset\Sigma_s$ and
\begin{equation*}
\lim_{x\to\lambda^+_1}\frac{\widetilde Z^+_1(x,0)\widetilde Z^-_2(x,0)-\widetilde Z^-_1(x,0)\widetilde Z^+_2(x,0)}{\widetilde Z^-_2(x,0)-\widetilde Z^+_2(x,0)} = \widetilde Z^+_1(\lambda^+_1,0)>0
\end{equation*}
by sliding vector field in \eqref{SVF}, the sliding orbit starting from $(Q_s,0)$ goes to $(\lambda^+_1,0)$ for backward direction. Thus, system~\eqref{pws4} has a bifurcating $L_s$ connecting visible bifurcating tangent point $(\lambda^+_1,0)$
as shown in Figure~\ref{Fig-thm5-1}(b), where $L_s$ is defined in Definition~\ref{df-sli}.
The analysis of other loops shown in Figure~\ref{Fig-Loop} is similar and we omit the statements.
Conclusion that $\beta_c+\beta_s\ge 1$ is proved.

\begin{figure}[htp]
\centering
\subfigure[bifurcating $L_c$]
 {
  \scalebox{0.4}[0.4]{
   \includegraphics{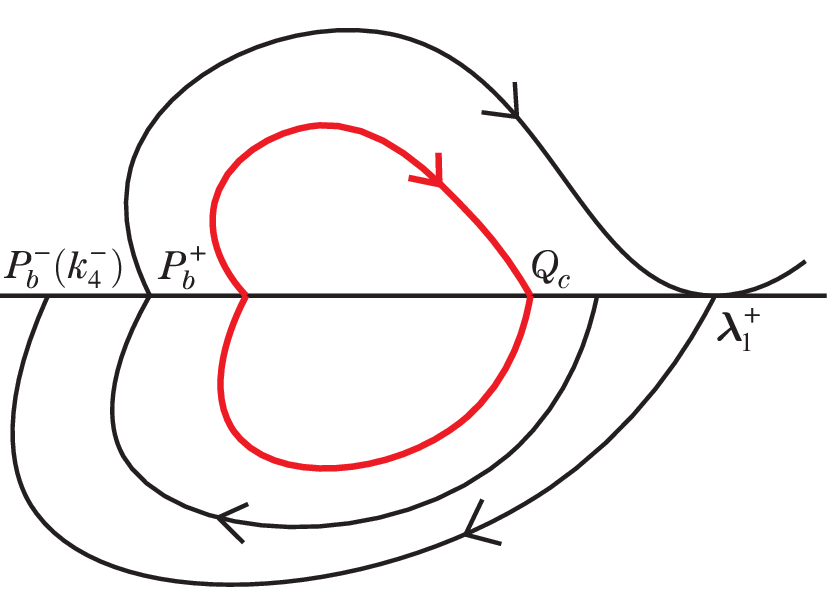}}}~~~
\subfigure[bifurcating $L_s$]
 {
  \scalebox{0.4}[0.4]{
   \includegraphics{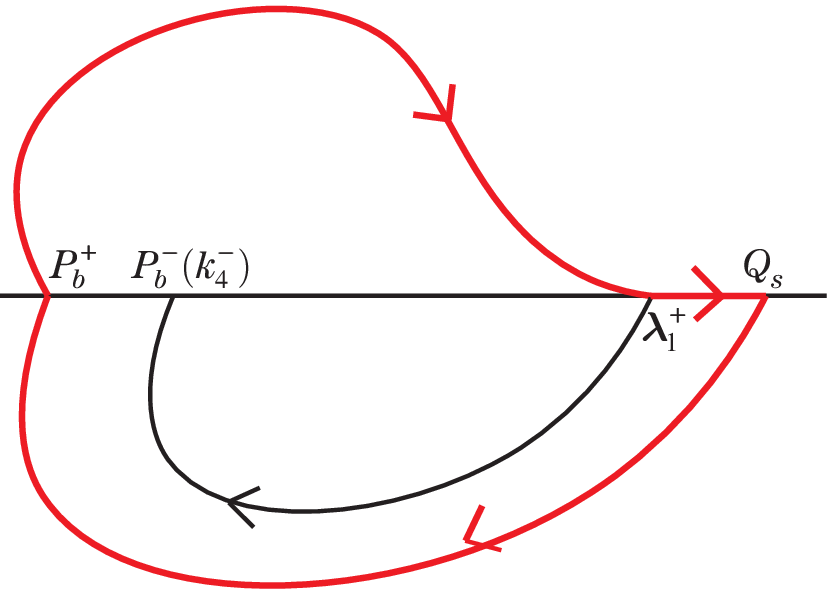}}}
 \caption{bifurcating $L_c$ and bifurcating $L_s$ for $\beta_c+\beta_s\ge 1$}
\label{Fig-thm5-1}
\end{figure}

{\it Step 2. Prove conclusion for $m^\pm\ge 5$.} We only need to consider $L^{cri}_*$ being of $10$ types as shown in Figure~\ref{Fig-Loop}(a)-(d),(f)-(h),(j)(k)(m).
For $L^{cri}_*$ shown in Figure~\ref{Fig-Loop}(a), Theorem~\ref{thm4} shows that there exists corresponding ${\boldsymbol \lambda}^\pm$ and $\psi^\pm(x,{\boldsymbol k}^\pm)$ such that location \eqref{Nest1} holds, i.e., system~\eqref{pws4} has $(m^++1)/2$ bifurcating $L^{cri}(1)$ connecting $(\lambda^+_{2i-1},0)$ respectively ($i=1,...,(m^++1)/2$). We only need to consider three cases
$${\rm~(C1)~}\ell=0,~~~~{\rm~(C2)}~\ell=(m^++1)/2,~~~~{\rm~(C3)~}1\le \ell\le (m^+-1)/2,$$

For case (C1), we take
\begin{equation*}
k^+_{i+2d+1}=Y\left(P^-\left(\lambda^+_{2i-1}\right),\lambda^+_{2i-1}\right)+\alpha_i, ~{\rm~for~}i=1,...,(m^++1)/2,
\end{equation*}
where $Y\left(P^-\left(\lambda^+_{2i-1}\right),\lambda^+_{2i-1}\right)$ and $P^-\left(\lambda^+_{2i-1}\right)$ is defined
in the first paragraph of step 2 in the proof of Theorem~\ref{thm4} and $0<\alpha_i\ll 1$.
By the analysis in step 1 system~\eqref{pws4} has $(m^++1)/2$ bifurcating $L_s$ connecting $\left(\lambda^+_{2i-1},0\right)$ respectively.
For any $x\in\left[\lambda^+_{2i},\lambda^+_{2i+1}\right]$ ($i=1,...,(m^+-1)/2$), orbit $\widetilde\gamma^-\left(t,x,0\right)$ transversally intersects $\Sigma$ at $\left(P^-(x),0\right)$ for forward direction and then orbit $\widetilde\gamma^+\left(t,P^-(x),0\right)$ transversally intersects $\Sigma$ at $\left(\widetilde x,0\right)$ for forward direction. It is not hard to check that $\lambda^+_{2i}<\widetilde \lambda^+_{2i}$ and $\lambda^+_{2i+1}>\widetilde \lambda^+_{2i+1}$, which implies that there is a $Q_{2i}\in\left[\lambda^+_{2i},\lambda^+_{2i+1}\right]$ such that $Q_{2i}=\widetilde Q_{2i}$. Thus, system~\eqref{pws4} has a crossing limit cycle transversally intersecting $\Sigma$ at $(Q_{2i},0)$ and lying in the region surrounded by $L^s_{2i-1}$ and $L^s_{2i+1}$, where $L^s_{2i-1}$ denotes the bifurcating $L_s$ connecting $(\lambda^+_{2i-1},0)$ ($i=1,...,(m^++1)/2)$.
Therefore, $\beta_c\ge (m^++1)/2-1$ and $\beta_s=(m^++1)/2$. The proof is finished
and an example for $m^+=5$ is shown in Figure~\ref{Fig-thm5-2}(a).

\begin{figure}[htp]
\centering
\subfigure[two $L_c$, three $L_s$]
 {
  \scalebox{0.4}[0.4]{
   \includegraphics{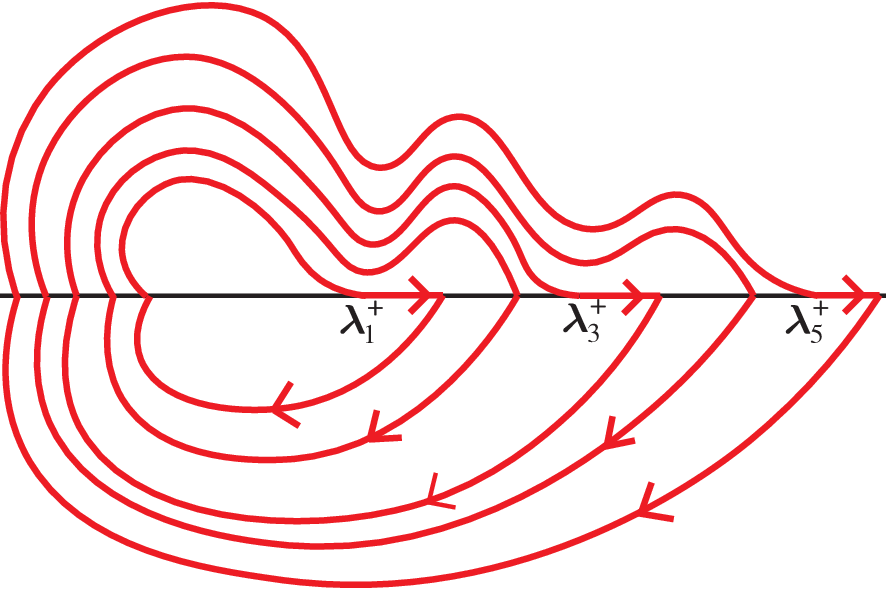}}}
\subfigure[five $L_c$]
 {
  \scalebox{0.4}[0.4]{
   \includegraphics{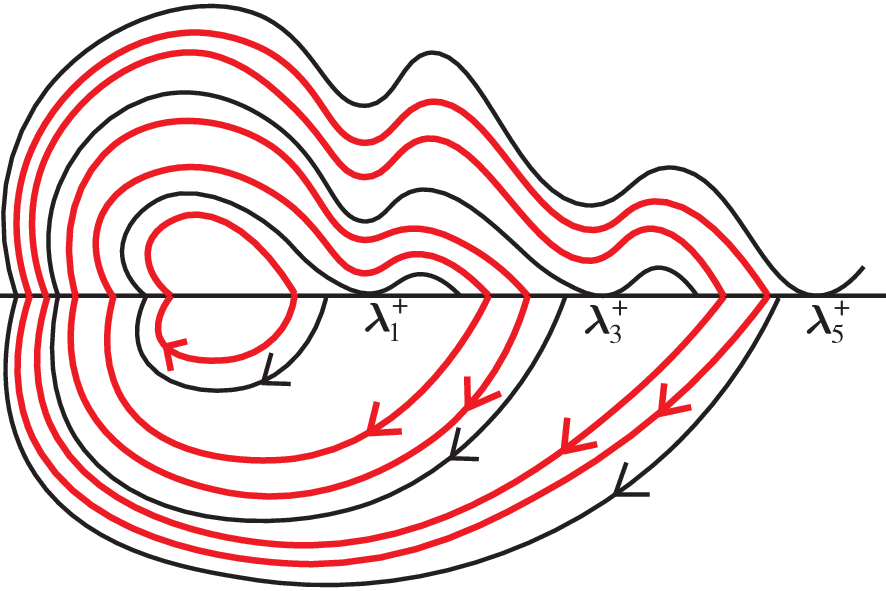}}}
\subfigure[four $L_c$, one $L_s$]
{
  \scalebox{0.4}[0.4]{
   \includegraphics{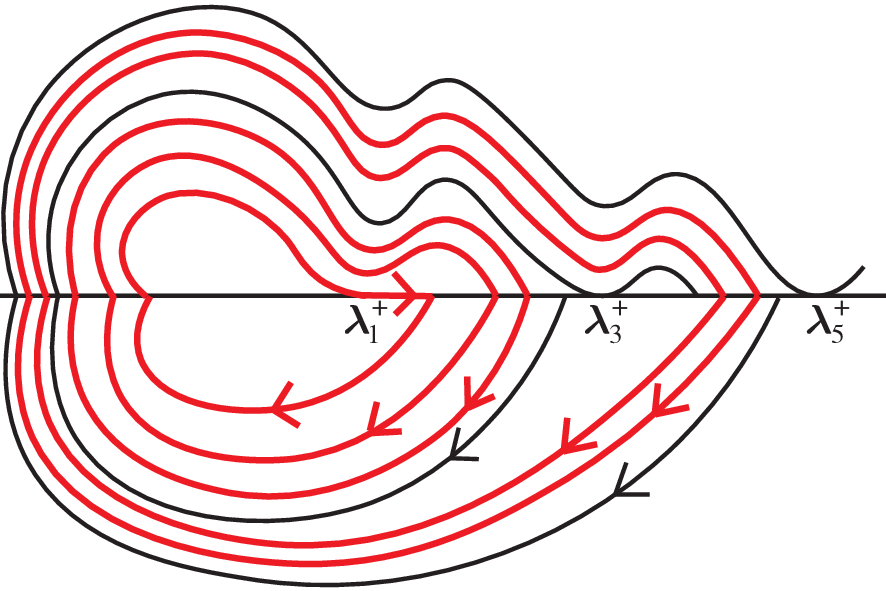}}}
\subfigure[three $L_c$, two $L_s$]
 {
  \scalebox{0.4}[0.4]{
   \includegraphics{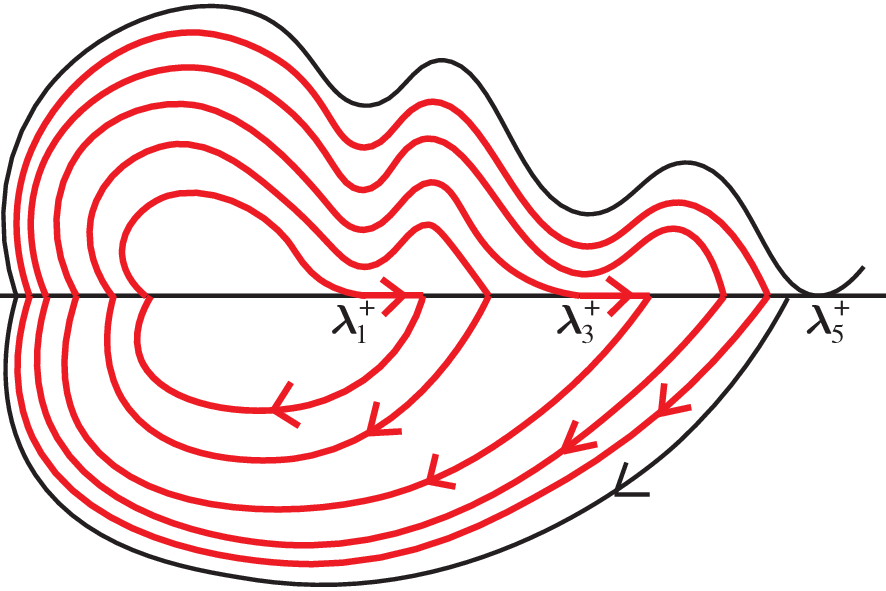}}}
 \caption{bifurcating $L_c$ and bifurcating $L_s$ for $\beta_c+\beta_s\ge m^*$}
\label{Fig-thm5-2}
\end{figure}

For case (C2), we take
\begin{equation*}
k^+_{i+2d+1}=Y\left(P^-\left(\lambda^+_{2i-1}\right),\lambda^+_{2i-1}\right)-\alpha_i,~{\rm~for~}i=1,...,(m^++1)/2,
\end{equation*}
which implies that system~\eqref{pws4} has $(m^++1)/2$ crossing limit cycles
(denoted by $L^c_{2i-1}$) bifurcating from $L^{cri}_{2i-1}$ respectively
by step 1 and no bifurcating $L_s$ appears.
Since all of these crossing limit cycle are unstable, system~\eqref{pws4} has a crossing limit cycle lying in the region surrounded by $L^c_{2i-1}$ and $L^c_{2i+1}$ respectively. Then we get $(m^+-1)/2$ bifurcating $L_c$ again.
Therefore, $\beta_c\ge (m^++1)/2+(m^+-1)/2=m^+$ and $\beta_s=0$.
The proof is finished and an example for $m^+=5$ is shown in Figure~\ref{Fig-thm5-2}(b).

For case (C3), we take
\begin{equation*}
\left\{
\begin{aligned}
&k^+_{i+2d+1}=Y\left(P^-\left(\lambda^+_{2i-1}\right),\lambda^+_{2i-1}\right)+\alpha_i,&&{\rm~for~}i=1,...,n,\\
&k^+_{i+2d+1}=Y\left(P^-\left(\lambda^+_{2i-1}\right),\lambda^+_{2i-1}\right)-\alpha_i,&&{\rm~for~}i=n+1,...,(m^++1)/2,\\
\end{aligned}
\right.
\end{equation*}
where $n:=(m^++1)/2-\ell$. Thus, system~\eqref{pws4} has $n$ (i.e., $(m^++1)/2-\ell$)
bifurcating $L_s$ connecting $\left(\lambda^+_{2i-1},0\right)$ ($i=1,...,n$) respectively as in case (C1) of step 2.
Moreover, system~\eqref{pws4} has $(m^++1)/2-n$ (i.e., $\ell$) unstable
crossing limit cycle bifurcating from
$L^{cri}_{2i-1}$ ($i=n+1,...,(m^++1)/2$) respectively as in case (C2) of step 2.
Analyzing the region surrounded by $L^s_{2n-1}$ and $L^c_{2n+1}$,
we obtain that $\lambda^+_{2n}<\widetilde\lambda^+_{2n}$. Associated with the instability of $L^c_{2n+1}$,
system~\eqref{pws4} has a crossing limit cycle lying in this region.
On the other hand, by the analysis in (C1) (resp. (C2)) there are at least $n-1$ (resp. $(m^++1)/2-n-1$)
bifurcating $L_c$ in the region surrounded by $L^s_1$ and $L^s_{2n-1}$ (resp. $L^c_{2n+1}$ and $L^c_{m^+}$).
That is, we get $1+(n-1)+(m^++1)/2-n-1=(m^+-1)/2$ bifurcating $L_c$ again.
Therefore, $\beta_c\ge (m^+-1)/2+(m^++1)/2-n=m^+-n$ and $\beta_s=n$.
The proof is finished and an example for $m^+=5$ is shown in Figure~\ref{Fig-thm5-2}(c)(d).

For other $9$ types of $L^{cri}_*$ shown in Figure~\ref{Fig-Loop}(b)-(d),(f)-(h),(j)(k)(m) in step 2,
this theorem can be proved similarly and we omit their proofs.
\end{proof}

\section{Conclusions and remarks}
\setcounter{equation}{0}
\setcounter{lm}{0}
\setcounter{thm}{0}
\setcounter{rmk}{0}
\setcounter{df}{0}
\setcounter{cor}{0}

In this paper, we come up with a series definitions and complete classifications for tangent points and loops connecting them. With these classifications, we mainly investigate question (Q).
For the part of tangent points in question (Q), we focus on the numbers of bifurcating tangent points and bifurcating tangent orbits
from tangent point $O$ of any multiplicity.
Corresponding result of the number of bifurcating tangent points is stated in Theorem~\ref{thm1},
which generalizes the results for tangent point $O$ of multiplicities $1$ and $2$ given in
previous publications such as \cite{Fang21,Teixeira11,Kuznetsov03,Li20,Han13}.
Corresponding result of the number of bifurcating tangent orbits is stated in Theorem~\ref{thm2},
which is new and important for the bifurcations of loops connecting tangent points.
For the part of loops connecting tangent points in question (Q),
we focus on the numbers of bifurcating loops and bifurcating crossing limit cycles
from critical loops $L^{cri}_*$
connecting tangent point $O$ of any multiplicity as shown in Figure~\ref{Fig-Loop}.
Corresponding results are stated in Theorems~\ref{thm3}, \ref{thm4} and \ref{thm5}.
Since the multiplicity of tangent point $O$ is general and not required to be $1$ or $2$ as
in previous publications such as \cite{Ponce15,Kuznetsov03,Han13,Novaes18,Huang22},
classic analysis is no longer valid and we have to develop new methods to deal with these two questions.

For bifurcating tangent points from tangent point $O$, it is directly analyzed by applying Malgrange Preparation
Theorem because they are zeros of a function $h(x)$ defined in section 1 by
the switching vector field on switching manifold $\Sigma$.
Thus, for system~\eqref{pws3} with tangent point $O$ of multiplicity $(m^+,m^-)$ we construct an
unfolding~\eqref{pws4} which concludes functional parameters ${\boldsymbol \lambda}^\pm$ and functional functions $\psi^\pm(x,{\boldsymbol k}^\pm)$. As in the proof of Theorem~\ref{thm2}, ${\boldsymbol \lambda}^\pm$ are used to determine the number, location and
multiplicity of bifurcating tangent points, which is also shown by $\alpha^\pm$ in unfolding~\eqref{pws2-exunfold} in the proof of Theorem~\ref{thm1}. Functional functions $\psi^\pm(x,{\boldsymbol k}^\pm)$ are explained by
Proposition~\ref{prop2} and are used to determine the tangent intersections between orbits and switching manifold
$\Sigma$ in the proof of Theorem~\ref{thm2}.

For bifurcating loops from $L^{cri}_*$, classic analysis method is defining displacement function
 with a sector on switching manifold $\Sigma$ as its domain, and then finding zeros by Intermediate Value
 Theorem or Implicit Function Theorem and so on. This is valid for the case that multiplicity of tangent
 point $O$ is $1$ because its structurally stability leads to that the
 dynamical behaviors of bifurcating tangent points and bifurcating tangent orbits
are clear under perturbations.
 However, when the multiplicity of tangent point $O$ is higher, the classic definition of displacement function
 is no longer valid because in small neighborhood of $O$ several new tangent points bifurcate under perturbations.
 In order to deal with it, we use ${\boldsymbol \lambda}^\pm$ to desingularize tangent point $O$ and then
 use functional functions $\psi^\pm(x,{\boldsymbol k}^\pm)$ to define new
 displacement function ${\cal Y}(x)$ as \eqref{Dis-Y}, which is determined by the orbit starting at $(x,0)$ and ending at $(x,{\cal Y}(x))$.
 The relation between degeneration of tangent point $O$ and the numbers of bifurcating loops from $L^{cri}_*$ are
 given in Theorems~\ref{thm3}, \ref{thm4} and \ref{thm5}, which shows that loops $L^{cri}_*$ own partly properties
 of limit cycles and homoclinic loops.

As discussions in section 1, a usual way in the investigation of bifurcations for piecewise-smooth systems is constructing functional parameters
(see, e.g., \cite{Bonet18,Fang21,Teixeira11,Kuznetsov03,Novaes18,Huang22}).
Unfortunately, there is no general methods to construct enough functional parameters independent among themselves in
perturbation systems.
Up to now there are no more than $2$ functional parameters in previous publications.
This leads to that for systems in high degeneration case (e.g., having tangent points of high multiplicities)
two functional parameters are not enough to unfold plentiful dynamics sufficiently.
Another thing, functional parameters influence the dynamics but usually have
no explicit expressions in perturbation vector fields, which means that we do not know how to
perturb vector fields to exhibit dynamical behaviors.
Thus, we not only construct functional parameters ${\boldsymbol \lambda}^\pm$ but also
construct functional functions $\psi^\pm(x,{\boldsymbol k}^\pm)$.
Moreover, functional parameters and functional functions are all expressed explicitly in the
perturbation vector field in \eqref{pws4}, where $\psi^\pm(x,{\boldsymbol k}^\pm)$ have expressions
as defined in Proposition~\ref{prop1}.

We have to remark that system~\eqref{pws3} is a special system~\eqref{pws2} because
items $\Upsilon^\pm(x,y)y$ are ignored.
In the case of no $\Upsilon^\pm(x,y)y$, we successfully construct functional functions $\psi^\pm(x,{\boldsymbol k}^\pm)$
in unfolding~\eqref{pws4}.
Unfortunately, the existence of $\Upsilon^\pm(x,y)y$ makes that
the horizontal isoclines are some curves and no longer straight lines $x=\lambda^+_i$ ($i=1,...,m^+$).
Our construction method for $\psi^\pm(x,{\boldsymbol k}^\pm)$ is invalid for such case and, hence,
a new unfolding method for system~\eqref{pws2} is needed.

{\footnotesize

}

\end{document}